% JEMS ??? (like GLP10)

\documentclass[11pt,a4paper]{article}
\usepackage{graphicx}
\usepackage{color}
\usepackage{amsmath,amsthm,amsfonts}

\addtolength\textwidth{5mm}
\usepackage[utf8]{inputenc}
%\def\comment#1{{\color{red} XXXXX: #1}}
%\def\blue#1{{\color{blue} #1}}

%%% NOTATION:

\newcommand\conv{\mathop{\mathrm{conv}}}
\newcommand\Borel{E}

\newcommand\R{\mathbb{R}}
\newcommand\C{\mathbb{C}}
\newcommand\Z{\mathbb{Z}}
\newcommand\N{\mathbb{N}}

\newcommand\calH{\mathcal{H}}

\newcommand\calL{\mathcal{L}}

\newcommand\weakto{\mathop{\rightharpoonup}}
\newcommand\e{\varepsilon}

\newcommand\self{\mathrm{\mathrm{self}}}
\newcommand\supp{\mathop{\mathrm{supp}}}

\newcommand\dist{{\mathrm{dist}}}
\newcommand\rel{{\mathrm{rel}}}

\newcommand\Id{{\mathrm{Id}}}

\newcommand\sym{{\mathrm{sym}}}

\newcommand\loc{{\mathrm{loc}}}
\newcommand\eps{{\varepsilon}}

\newcommand{\LM}[1]{\hbox{\vrule width.2pt \vbox to#1pt{\vfill \hrule width#1pt height.2pt}}}
\newcommand{\LL}{{\mathchoice
{\,\LM7\,}{\,\LM7\,}{\,\LM5\,}{\,\LM{3.35}\,}}}
\newtheorem{theorem}{Theorem}[section]
\newtheorem{lemma}[theorem]{Lemma}

\newtheorem{remark}[theorem]{Remark}

\newtheorem{proposition}[theorem]{Proposition}
\numberwithin{equation}{section}
\usepackage{fancyhdr}\pagestyle{fancy}
\fancyhf{} 
\fancyhead[C]{\small\sc Draft - not for distribution}
\fancyfoot[L]{{\tt \jobname.tex}}\fancyfoot[C]{\thepage}\fancyfoot[R]{{\small\sc [\today]}}
\begin{document}
\begin{center}
{ \LARGE
Derivation of strain-gradient plasticity 
from a generalized Peierls-Nabarro model\\[5mm]}
{\today}\\[5mm]
Sergio Conti,$^{1}$ Adriana Garroni,$^{2}$ and Stefan M\"uller$^{1,3}$
\\[2mm]
{\em $^1$ Institut f\"ur Angewandte Mathematik,
Universit\"at Bonn\\ 53115 Bonn, Germany }\\
{\em $^{2}$ Dipartimento di Matematica, Sapienza, Universit\`a di Roma\\
00185 Roma, Italy}\\
{\em   $^3$ Hausdorff Center for Mathematics, Universit\"at Bonn\\
    53115 Bonn, Germany}\\[4mm]

\begin{minipage}[c]{0.8\textwidth}
We derive strain-gradient plasticity from a nonlocal phase-field model of 
dislocations in a plane. Both a continuous energy with linear growth depending 
on a measure which characterizes the macroscopic dislocation density and a 
nonlocal effective energy representing the far-field interaction between 
dislocations arise naturally as scaling limits of the nonlocal elastic 
interaction. Relaxation and formation of microstructures at intermediate scales 
are automatically incorporated in the limiting procedure based on 
$\Gamma$-convergence.
\end{minipage}
\end{center}

\section{Introduction}

Crystal plasticity and dislocations are a fundamental theme in the mechanics of solids. Whereas the calculus of variations has been very helpful in the study of nonlinear elasticity and phase transitions, the study of plasticity and dislocations has proven much more demanding. Dislocations are topological singularities of the strain field, which share many features with other important classes of topological defects, such as Ginzburg-Landau vortices, defects in liquid crystals, harmonic maps, models of superconductivity. Their importance for the understanding of the yield behavior of crystals motivated a large interest, and indeed 
in the last decade tools have been developed to study individual dislocations, both in reduced two-dimensional formulations in which one deals with point singularities \cite{Ponsiglione2007,AlicandroCicalesePonsiglione2011,DelucaGarroniPonsiglione2012}, and, with some geometrical restrictions, in the three dimensional setting in which singularities are lines \cite{ContiGarroniOrtiz2015}.
{In this paper we go beyond the scale of single dislocation lines and derive macroscopic strain-gradient plasticity, starting from 
a Peierls-Nabarro model. The limiting model 
combines effects at different scales, it
includes both long-range interactions of singularities and a short-range strain-gradient term which arises from the self-interaction of dislocations. This result does not require any 
restriction on the admissible configurations of defects. One important tool is a self-similar multiscale decomposition of the interaction kernel.}

In order to relate the presence of dislocations to macroscopic plastic deformations of solids, in particular within the framework of a strain-gradient theory of plasticity, 
one needs to consider the situation in which many dislocations are present and their {density} couples to macroscopic deformation of the material. 
This scaling was first addressed in the context of Ginzburg-Landau vortices by Sandier and Serfaty \cite{SandierSerfaty2003}. For point dislocations in two dimensions
this regime was  first studied in \cite{GarroniLeoniPonsiglione2010} 
in a dilute geometrically linear setting, and then generalized 
to geometrically nonlinear models \cite{MuellerScardiaZeppieri,MuellerScardiaZeppieri2015} and to the non-dilute setting \cite{Ginster1,Ginster2}.
A model based on microrotations was presented and studied in two dimensions in \cite{LauteriLuckhaus2016}, where in particular 
the Read-Shockley formula for the energy of small-angle grain boundaries, based on the appearence of an array of point dislocations, was derived.

The three-dimensional case is substantially more subtle.  Firstly, because the geometry plays an important role in the interaction between line singularities, and secondly, because it is a higher-order tensorial problem, in which the energy only controls some components of the relevant strain field. Moreover, the anisotropy of the energy leads to additional relaxation at intermediate scales \cite{CacaceGarroni2009,ContiGarroniMueller2011,ContiGarroniOrtiz2015}.
%Indeed, progress mentioned above has been related to an improved understanding of generalized Korn's inequalities.

A natural geometrical restriction involves assuming that a single slip plane is 
active, so that the singularities are lines in a prescribed plane. The elastic 
problem can then be solved  (implicitly) and  results in a nonlocal interaction 
between the singularities, which scales as the $H^{1/2}$ norm {of the slip field}. This model, which 
is in the spirit of the classical Peierls-Nabarro model, was proposed and 
studied numerically in 
\cite{Ortiz1999,KoslowskiCuitinoOrtiz2002,KoslowskiOrtiz2004}, in a regime in 
which a few individual dislocations are present. 
In the same regime, a scalar simplification of the model was studied analytically in \cite{GarroniMueller2005,GarroniMueller2006}. 
In the limit of small lattice spacing $\eps$, the energy concentrates along the dislocations and the problem reduces to a line-tension model; 
the relevant independent variable is a  measure concentrated on a line.
One important tool was the study of variational convergence for phase transitions with nonlocal interactions, see also \cite{AlbertiBouchitteSeppecher1994,AlbertiBouchitteSeppecher1998,FocardiGarroni2007,SavinValdinoci2012}. The extension to the physically relevant vectorial situation in \cite{CacaceGarroni2009,ContiGarroniMueller2011} lead to the discovery of unexpected microstructures at intermediate scales, which arise due to the interplay of the localized nature of the singularity with the anisotropy of the vectorial elastic energy. A generalization to multiple planes is discussed in \cite{ContiGladbach2015}.

We address here a scaling regime in which the total length of the dislocations diverges, rendering the average line-tension  energy comparable with the macroscopic long-range elastic energy.
This is a natural scaling in the presence of topological singularities, since it balances the long-range interaction with the short-range core energies. This critical scaling regime was first considered in the context of Ginzburg-Landau vortices and models of superconductivity 
\cite{SandierSerfaty2003,SandierSerfaty2007,BaldoJerrardOrlandiSoner2012,BaldoJerrardOrlandiSoner2013}. Whereas some ideas are closely related, the vectorial and anisotropic nature of the dislocation problem renders a direct transfer difficult and requires new techniques, in particular for treating the microstructures at intermediate scales.

We derive here a macroscopic strain-gradient theory, where the macroscopic 
effect of the dislocations is captured by a dislocation density, which is a 
measure in $H^{-1/2}$. The  mechanical implications of this result and the 
connection to strain-gradient theories of plasticity have been discussed in 
\cite{ContiGarroniMueller2016,ArizaContiGarroniOrtiz2018}. These include 
dislocation microstructures at intermediate scales, in the form of dislocation 
networks, {the study of which has} long been an important problem in mechanics \cite{HirthLothe1968,FleckHutchinson1993,
OrtizRepetto1999,Gurtin2005,groma2007dynamics,acharya2010new,groma2010variational,berdichevsky2016energy,monavari2016continuum}.
We present here a joint convergence result, in which the two terms are derived in one step from the Peierls-Nabarro model in the limit of small lattice spacing. The study of a single limiting process is crucial to obtain the limiting behavior of both the local and the nonlocal term; if the various homogenization and relaxation steps are taken separately then the nonlocal (interaction) term  disappears \cite{ContiGarroniMueller2017-BUMI}.
%This work concludes a long-term plan whcih contains seeral partial results published elsewhere \cite{ContiGarroniMueller2017-BUMI}.

The main difficulty in the proof is to obtain a joint treatment of the many different scales present in the problem. The discrete nature of the dislocations leads to localization and to {slip fields} in $BV(\Omega;\Z^N)$, at the same time the nonlocal part requires {slip fields} in $H^{1/2}{(\Omega;\R^N)}$. These two spaces are, except for constants, disjoint (see Lemma \ref{lemmah12bv} below), hence  both requirements can only be realized approximately. This is performed introducing a number of {well-separated} scales, regularizations and cutoffs, as discussed in the introduction to Section \ref{sectupper} for the upper bound and Section \ref{sectlowerb} for the lower bound. Indeed, both the original functional and the limiting functional are finite on $H^{1/2}{(\Omega;\R^N)}$, whereas the relaxation steps occur at intermediate scales, where the relevant functions belong to $BV(\Omega;\Z^N)$. 
As it is clear from this summary, our argument makes a strong usage of the 
existence of a lifting of the dislocation density to  a function {of bounded variation}. The 
extension to the unconstrained three-dimensional case will probably require
a different functional framework, which {could possibly} be formulated via 
cartesian currents 
\cite{Hochrainer2013,ContiGarroniMassaccesi2015,ScalaVangoethem2016}.

We now formulate the model we study. 
The total energy associated to a phase field $u\in L^2(\Omega;\R^N)$, with $\Omega\subset\R^2$ open and bounded, is
\begin{align}
 E_\eps[u,\Omega]:=&\frac1\eps \int_\Omega W(u(x))dx\nonumber\\
 &+ \int_{\Omega\times\Omega} \Gamma(x-y) (u(x)-u(y))\cdot (u(x)-u(y))dxdy.
 \label{uno}
\end{align}
The nonlinear potential $W:\R^N\to[0,\infty)$ satisfies
\begin{equation}\label{eqWdist}
 \frac 1c\, 
\dist^2(\xi,\Z^N)\le W(\xi)\le c \, \dist^2(\xi,\Z^N)
 \end{equation}
 for some $c>0$.
The elasticity kernel $\Gamma\in L^1_\loc(\R^2;\R^{N\times N}_\sym)$ is
defined by
\begin{equation}
\label{eq:derGammaintro}
\Gamma(z):=\frac{1}{|z|^{3}} \hat\Gamma\left(\frac{z}{|z|}\right)\,,
\end{equation}
where
 $\R^{N\times N}_\sym$
denotes the set of symmetric $N\times N$ matrices and
$\hat\Gamma\in L^\infty(S^1;\R^{N\times N}_\sym)$ obeys, for some $c>0$,
\begin{equation}\label{eqgammaposdeftheointro}
 \hat\Gamma(z)=\hat\Gamma(-z) \text{ and }
\frac{1}{c}|\xi|^2\le  \hat\Gamma(z)\xi\cdot\xi\le c|\xi|^2 \hskip5mm 
\text{ for all } \xi\in \R^N, \, z\in S^1\,.
\end{equation}
The specific form of $\hat \Gamma$ depends on the elastic constants and the Burgers vectors of the 
crystal, for example for an elastically
isotropic cubic crystal one obtains $\Gamma=\Gamma^\mathrm{cubic}$, where
\begin{equation}\label{eqgammacubic}
\Gamma^\mathrm{cubic}(z):=\frac{\mu}{16\pi(1-\overline{\nu})|z|^3}\left(\begin{matrix}
\overline{\nu}+1-3\overline{\nu}\frac{z_2^2}{|z|^2} &   3\overline{\nu}\frac{z_1z_2}{|z|^2}\\  
3\overline{\nu}\frac{z_1z_2}{|z|^2}&\overline{\nu}+1-3\overline{\nu}\frac{z_1^2}{|z|^2}  \end{matrix} \right)\,.
\end{equation}
In (\ref{eqgammacubic}) $\overline{\nu}$ and $\mu$ denote the 
material's Poisson's ratio and shear modulus, respectively (see \cite{CacaceGarroni2009}).
It is easy to see that for $\mu>0$ and $\overline{\nu}\in (-1,1/2)$ the kernel $ \Gamma^\mathrm{cubic}$ fulfills the assumption (\ref{eqgammaposdeftheointro}).

In order to present our main result we first introduce the several effective energy 
densities which are generated by the rescaling procedure. Detailed explanations on the physical 
significance of the different steps are given in Section \ref{sechomo}.
In a first step the nonlocal kernel $\Gamma$ generates an unrelaxed 
line-tension energy $\psi:\Z^N\times S^1\to {[0,\infty)}$ by
\begin{equation}\label{eqpsifromgamma}
 \psi(b,n):=2\int_{\{x\cdot n=1\}} \Gamma(x)b\cdot b \,d\calH^1(x).
\end{equation}
Relaxation at the line-tension scale leads to the 
$BV$-elliptic envelope $\psi_\rel:\Z^N\times S^1\to[0,\infty)$ of
$\psi:\Z^N\times S^1\to[0,\infty)$, defined by 
\begin{equation}\label{eqdefpsirel}
\begin{split}
 \psi_\rel(b,n):=&\inf \Bigl\{ \frac12\int_{J_u\cap B_1} \psi([u],\nu) 
d\calH^1: 
u\in BV_\loc(\R^2;\Z^N), \\
 &\hskip1.5cm\supp(u-u^0_{b,n})\subset\subset B_1\Bigr\} ,
\end{split}
\end{equation}
where $u^0_{b,n}(x):=b\chi_{\{x\cdot n>0\}}$ and $\nu$ is the normal to the 
jump set $J_u$ of $u$. We recall that the concept of $BV$-elliptic envelope was 
introduced and studied in \cite{AmbrosioBraides1990a,AmbrosioBraides1990b}, see also \cite[Sect.~5.3]{AmbrosioFP}.

Finally, in the second relaxation step one obtains a continuous energy density
$g:\R^{N\times 2}\to {[0,\infty)}$ defined as  
 the convex envelope of 
\begin{equation}\label{eqg0}
 g_0(A):= \begin{cases}
    0, & \text{ if } A=0,\\
    \psi_\rel(b,n), & \text{ if } A=b\otimes n \text{ for } b\in \Z^N, 
n\in S^1,\\
                \infty , & \text{ otherwise.}
               \end{cases}
\end{equation}
In particular, the function $g$ turns out to be  positively 1-homogeneous, see \cite{ContiGarroniMueller2017-BUMI}.

Our main result is the following.

\begin{theorem}\label{theorem1}
 Let $\Omega\subset\R^2$ be a bounded connected Lipschitz domain,
and let $E_\eps[\cdot, \Omega]$ be defined as in (\ref{uno}), with 
$W$ and $\Gamma$ which satisfy 
(\ref{eqWdist})--(\ref{eqgammaposdeftheointro}).

We say that a family of functions $u_\eps\in L^2(\Omega;\R^N)$, $\eps>0$, converges to $u$ if
\begin{equation}\label{eqtheorem1conv}
 \frac{u_\eps}{\ln (1/\eps)} \to u \hskip1cm \text{ in } L^2(\Omega;\R^N) \hskip1cm\text{ as }\eps\to0.
\end{equation}
With respect to this convergence we have
\begin{equation*}
\Gamma\hbox{-}\lim_{\e\to 0} 
\frac{1}{(\ln (1/\e))^2} 
E_{\e}[\cdot,\Omega] = F_0[\cdot,\Omega] \,,
\end{equation*}
where $F_0$ is defined by
\begin{equation}\label{eqdefF}
F_0[u,\Omega]:=F_\self[u,\Omega]+\int_{\Omega\times\Omega} \Gamma(x-y) (u(x)-u(y))\cdot(u(x)-u(y))dxdy 
\end{equation}
and
\begin{equation}\label{eqdefFself}
 F_\self[u,\Omega]:=\int_\Omega g(\nabla u) dx + \int_\Omega g\left(\frac{dD^su}{d|D^su|}\right)d|D^su|
\end{equation}
if $u\in BV(\Omega;\R^N)\cap H^{1/2}(\Omega;\R^N)$, and $F_0[u,\Omega]=\infty$ otherwise. 
Here $g:\R^{N\times 2}\to[0,\infty)$ is the convex envelope of the function $g_0$
defined from the kernel $\Gamma$ in (\ref{eqpsifromgamma})--(\ref{eqg0}).

Further, the functionals $(\ln (1/\eps))^{-2}E_\eps[\cdot,\Omega]$ are, with respect to the stated convergence, equicoercive, in the sense that if $u_\eps\in L^2(\Omega;\R^N)$ are such that
$E_\eps[u_\eps,\Omega]\le C (\ln (1/\eps))^2$ for all $\eps$ then there is 
a subsequence $\eps_k\to0$ such that, for some $d_k\in\Z^N$ and some
 $u\in L^2(\Omega;\R^N)$, one has
 \begin{equation}
  \lim_{k\to\infty} \frac{u_{\eps_k}-d_k}{\ln (1/\eps_k)} = u \hskip1cm \text{ in } L^2(\Omega;\R^N).
 \end{equation}
\end{theorem}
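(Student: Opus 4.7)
The plan is to decompose the kernel $\Gamma$ into a near-field and a far-field part, which generate respectively the self-energy $F_\self$ and the long-range term in $F_0$. The natural tool is a dyadic decomposition $\Gamma=\sum_{k\in\Z}\Gamma^{(k)}$ with $\Gamma^{(k)}$ supported on the annulus $\{2^{k-1}\le|z|\le 2^{k+1}\}$; the homogeneity of $\Gamma$ makes the rescaled pieces essentially scale-invariant, so the $(\ln(1/\eps))^2$ factor in front of $E_\eps$ originates from the $\sim\log_2(1/\eps)$ dyadic scales between the core size $\eps$ and a fixed macroscopic cutoff $\delta$. Once a hierarchy of well-separated scales $\eps \ll \eta \ll \delta \ll 1$ is fixed, scales below $\eps$ match the $W$ term as a core cost, intermediate scales produce the line-tension density $\psi$ and its relaxations, and scales above $\delta$ contribute the macroscopic nonlocal interaction. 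Compactness is almost built into this picture: the positivity condition (\ref{eqgammaposdeftheointro}) shows that the nonlocal term controls an anisotropic $H^{1/2}$ seminorm, so after subtracting a suitable $d_k\in\Z^N$ to normalize the mean modulo integer translations, $(u_\eps-d_k)/\ln(1/\eps)$ is bounded in $H^{1/2}(\Omega;\R^N)$ and hence precompact in $L^2(\Omega;\R^N)$.

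For the lower bound, fix $u_\eps\to u$ with $E_\eps[u_\eps,\Omega]\le C(\ln(1/\eps))^2$ and split $\Gamma=\Gamma_\delta^{\mathrm{near}}+\Gamma_\delta^{\mathrm{far}}$ with $\Gamma_\delta^{\mathrm{near}}$ supported in $\{|z|\le\delta\}$ and $\Gamma_\delta^{\mathrm{far}}$ bounded. The far-field contribution is continuous along the $L^2$ convergence of $u_\eps/\ln(1/\eps)$ and, by monotone convergence as $\delta\to0$, yields the nonlocal term in $F_0$. The near-field part is handled by a blow-up procedure: since the $W$-penalization forces $u_\eps$ to take integer values outside a negligible set, the rescaled configuration in a ball of radius $r$ around a point $x_0$ where $u$ jumps is, up to vanishing error, a competitor in the definition (\ref{eqdefpsirel}) of $\psi_\rel([u](x_0),\nu(x_0))$. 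Averaging over the possible Burgers vectors and directions, as forced by the integrability of the energy density, finally produces the convex envelope $g$ and hence $F_\self[u,\Omega]$.

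For the upper bound one reduces, by density and by the definition of $g$ as the convex envelope of $g_0$, to $u$ piecewise affine with jumps along polyhedral sets, and by the definition of $\psi_\rel$ further to configurations where each macroscopic jump is realized by finitely many parallel jumps with integer Burgers vectors $b_j$ along directions $n_j$. Each elementary jump is resolved by the optimal one-dimensional transition profile across a core of width $\eps$. To produce the correct amplitude $\ln(1/\eps)\,u$ in (\ref{eqtheorem1conv}) and the quadratic log factor in the energy, one stacks $O(\ln(1/\eps))$ parallel copies of each such profile, spaced on the intermediate scale $\eta$. A single unit-jump profile contributes, through the $\sim\log(1/\eps)$ dyadic pieces of $\Gamma$ between $\eps$ and $\delta$, a self-energy $\psi(b_j,n_j)\ln(1/\eps)$ per unit length; multiplying by the $O(\ln(1/\eps))$ stacked copies reproduces the $g$-density against the $(\ln(1/\eps))^2$ prefactor. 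The macroscopic nonlocal term is recovered by the far-field part of the same sum.

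The main obstacle, as already emphasized in the introduction, is that the recovery sequence must simultaneously behave like a $BV$-function with $\Z^N$-valued jumps at the intermediate scale $\eta$ (so that $\psi$ and its relaxations are correctly activated) and like an $H^{1/2}$-function at the macroscopic scale (so that it carries the correct nonlocal contribution), while these two classes meet only at constants. The scales $\eps$, $\eta$, $\delta$ must therefore be arranged in a strict hierarchy, and the three relaxation steps $\psi \rightsquigarrow \psi_\rel \rightsquigarrow g_0 \rightsquigarrow g$ have to be executed inside a single limiting procedure so that the near-field and far-field contributions combine additively into $F_\self$ and the nonlocal term of $F_0$ without double-counting or leakage between scales.
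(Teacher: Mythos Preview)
Your high-level architecture---dyadic near/far decomposition of $\Gamma$, with the far-field giving the nonlocal term and the near-field giving $F_\self$---matches the paper's strategy (the paper phrases it as a measure on $\Omega\times\Omega$, split into diagonal and off-diagonal parts, but this is equivalent). The compactness argument via the $H^{1/2}$ bound is also correct as far as it goes, though you omit the additional step of showing the limit lies in $BV$, which the paper obtains by passing through an auxiliary $BV(\Omega;\Z^N)$ approximation (Proposition~\ref{propmakeubv}).

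The genuine gap is in your near-field lower bound. A blow-up argument at a point ``where $u$ jumps'' cannot work here: the limit $u$ lies in $BV\cap H^{1/2}$, and Lemma~\ref{lemmah12bv} shows that such functions satisfy $\calH^1(J_u)=0$, so $Du$ is carried entirely by its absolutely continuous and Cantor parts---there are no jump points to blow up around. Moreover, even at a point of $Du$, the rescaled $u_\eps$ is not a competitor in~(\ref{eqdefpsirel}): it is only approximately $\Z^N$-valued, and after dividing by $\ln(1/\eps)$ it takes values in $\frac{1}{\ln(1/\eps)}\Z^N$, not $\Z^N$. The paper instead (Proposition~\ref{proplowerboundlocal}) first replaces $u_\eps$ by a genuine $BV(\Omega';\Z^N)$ function via Proposition~\ref{propmakeubv}, then performs an \emph{iterative mollification} over a geometric family of scales and uses a counting argument to select one scale $h$ at which (i) the truncated energy $E^*_{h+t}$ is no larger than the average over scales and (ii) the $BV$-norm drop under mollification is small. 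This second condition is what forces the mollified function to be essentially one-dimensional at scale $2^{-h}$, allowing Proposition~\ref{propconstruction} to extract a line-tension lower bound $\psi_\rel$ at that scale; the passage to $g$ then comes from the $\Gamma$-convergence of the rescaled line-tension functionals (Theorem~\ref{theo:homog}), not from a blow-up averaging.

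Your upper bound sketch is closer to the paper's, but you do not say how the $O(\ln(1/\eps))$ stacked copies are prevented from interacting through the nonlocal kernel. The paper (Lemma~\ref{propupperbound}) handles this by translating the copies in a direction $\zeta\in\overline B_1$ and then \emph{averaging} the cross-term energy over $\zeta$; a good $\zeta_\eps$ is then selected for which the cross terms are no worse than the average, and this average is shown to be $o(1)$ via an explicit $L^p$--$L^q$ estimate on the convolution kernel generated by the stacking. Without this mechanism the cross terms are of the same order as the diagonal self-energy and the upper bound fails.
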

We remark that $\psi_\rel$ coincides with the line-tension energy density 
obtained in the subcritical regime \cite{ContiGarroniMueller2011}, see Theorem 
\ref{theo:linetension} below. 

\medskip
%\begin{proof}[Proof of Theorem \ref{theorem1}]

The proof of above theorem is a combination of various results proved in the rest of the paper. The compactness assertion follows from Proposition \ref{propcompactness} in Section \ref{sectcpt}, 
the upper bound from 
Proposition \ref{propgammalimsup} in Section \ref{sectupper}, and the 
lower bound from Proposition \ref{proplowerbound} in Section \ref{sectlowerb}.
%\end{proof}

\medskip

In closing this Introduction we briefly recall the connection between $E_\eps$ and the classical Peierls-Nabarro model which contains the elastic energy over a three-dimensional domain.
The 
Peierls-Nabarro model, as generalized in \cite{Ortiz1999,KoslowskiCuitinoOrtiz2002,KoslowskiOrtiz2004} to three dimensions, 
expresses the free energy in terms of the slip $v:\Omega\to\R^3$ as
$$ 
E_{\rm free}[v]:= E_{\rm elastic}[v] + E_{\rm interfacial}[v]\,.
$$
Here the first term represents the long-range elastic distortion due to the
slip and the second term penalizes slips that are not integer multiples of the
Burgers vectors of the crystal lattice. One denotes by
${b}_1$, ..., ${b}_N$ the relevant Burgers vectors and considers slips of the form
$$
v=u_1{b}_1+...+u_N{b}_N
$$
where $u:\Omega\subset\R^2\to \R^N$. 
The term $E_{\rm interfacial}$  penalizes values of $u$ far from $\Z^N$, so that $v$ is close to the lattice generated by $\{b_1, \dots, b_N\}$. A simple model is
\begin{equation*}
 E_{\rm interfacial}[v]=\frac1\eps\int_\Omega \dist^2(u,\Z^N) dx,
\end{equation*}
where $u$ is related to $v$ as stated above.
We observe that the specific functional form does not contribute to the limit. 
At variance with many classical results in $\Gamma$-convergence for phase-field models of phase transitions there is no 
equipartition of energy, and the only role of the interfacial energy is to force 
$u$ to jump on a scale $\eps$. The limiting energy arises then completely from 
the elastic term, as it is apparent from the characterization of $g$ and $\psi$ in 
terms of the kernel $\Gamma$ in (\ref{eqpsifromgamma})--(\ref{eqg0}).

The elastic interaction is given by 
\begin{equation*}
  E_{\rm elastic}[v]=\inf \left\{\int_{\Omega\times \R} \frac12\C\nabla U\cdot \nabla U\, dx\right\},
\end{equation*}
where the displacement $U:\Omega\times\R\to\R^3$ is required to
have a discontinuity of  $v=\sum u_ib_i$ across
$\Omega\times\{x_3=0\}$.
Minimizing out $U$ leads to a nonlocal functional of $u$ of the kind of 
(\ref{uno}) up to boundary effects which do
not influence the leading-order behavior, see 
\cite{GarroniMueller2005,GarroniMueller2006} for a discussion.
The factor $\eps$ in $E_{\rm interfacial}$ is proportional to the lattice spacing and arises from the different scaling of the bulk and the interfacial term.
We refer to \cite{GarroniMueller2005,ContiGarroniMueller2016} for a more detailed discussion of this relation.

\begin{remark}
We discuss in this paper the case that $\Omega$ is a bounded Lipschitz set. 
Similar results can be obtained, with the same proofs, for the case that 
$\Omega$ is a torus. In the latter case it is easy to see that the elastic 
energy $E_{\rm elastic}$ coincides with the nonlocal energy in 
$E_\eps$, up to lower-order terms which are continuous in the topology 
considered here. This leads to the model described in 
\cite{ContiGarroniMueller2016}.
\end{remark}

\def\ELT{E^\mathrm{LT}}
\def\ELTrel{E^\mathrm{LT,rel}}
\section{Limits at separated scales}
\label{sechomo}

{In this section we briefly review two previous results on different scalings which have been mentioned in the introduction and that will be used in the proofs.}

If a sequence $u_\eps$ has energy proportional to $\ln (1/\eps)$, in the sense that $E_\eps[u_\eps,\Omega]\le C \ln (1/\eps)$, then asymptotically $u_\eps$ describes 
dislocations with finite total length, and the limiting energy is given by an integral over the line. This is called the line-tension approximation 
and was studied in \cite{GarroniMueller2005,ContiGarroniMueller2011}.
\begin{theorem}[\protect{\cite[Th.~1.1]{ContiGarroniMueller2011}}]\label{theo:linetension}
 Let $\Omega\subset\R^2$ be a bounded connected Lipschitz domain,
and let $E_\eps[\cdot, \Omega]$ be defined as in (\ref{uno}), with 
$W$ and $\Gamma$ which satisfy 
(\ref{eqWdist})--(\ref{eqgammaposdeftheointro}).

The functionals $1/(\ln (1/\e)) 
E_{\e}[\cdot,\Omega]$ are $L^1$-equicoercive, in the sense that if $
E_\e[u_\e,\Omega]\le C \ln (1/\e)$ then there are $d_\e\in \Z^N$ and $u\in 
BV(\Omega;\Z^N)$ such that $u_\e-d_\e$ has a subsequence that converges to $u$ in $L^1(\Omega;\R^N)$.

Further, we have
\begin{equation*}
\Gamma\hbox{-}\lim_{\e\to 0} 
\frac{1}{\ln (1/\e)} 
E_{\e}[\cdot,\Omega] = \ELTrel[\cdot,\Omega] \,,
\end{equation*}
with respect to strong $L^1$ convergence,
where the relaxed line-tension fuctional $\ELTrel$ is defined by
\begin{equation}\label{eqdefeltrel1}
 \ELTrel[u,\Omega]:=
 \begin{cases}
 \displaystyle\int_{J_u\cap\Omega} \psi_\rel\left([u], \nu\right) d\calH^1,
  & \text{ if } u\in SBV(\Omega;  \Z^N),\\
  \infty, & \text{ otherwise,}
 \end{cases}
\end{equation}
with $\psi_\rel$ obtained from $\Gamma$ as in 
(\ref{eqpsifromgamma}) and (\ref{eqdefpsirel}).
\end{theorem}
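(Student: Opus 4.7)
The plan is to view $E_\e$ as a nonlocal Modica--Mortola-type functional: the local term $W/\e$ forces $u_\e$ to lie in $\e$-neighbourhoods of $\Z^N$, while the nonlocal kernel $\Gamma$, being of $H^{1/2}$ type, charges a logarithmic cost $\sim \psi(b,n)\ln(1/\e)$ per unit length of sharp transition of amplitude $b$ with normal $n$. The precise formula \eqref{eqpsifromgamma} arises by integrating $\Gamma$ across the transition line; BV-relaxation within small balls then converts $\psi$ into $\psi_\rel$ as in \eqref{eqdefpsirel}.

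For \textbf{compactness}, from $E_\e[u_\e,\Omega]\le C\ln(1/\e)$ and \eqref{eqWdist} one has $\int_\Omega \dist^2(u_\e,\Z^N)\dx\le C\e\ln(1/\e)\to 0$, so pointwise rounding produces $v_\e\in L^2(\Omega;\Z^N)$ with $\|u_\e-v_\e\|_{L^2}\to 0$. The nonlocal term bounds an $H^{1/2}$-type seminorm of $v_\e$; applied to $\Z^N$-valued functions and combined with a co-area/slicing argument, this controls the total variation of $v_\e-d_\e$ for a suitable integer shift $d_\e\in\Z^N$, yielding subsequential $L^1$-compactness in $BV(\Omega;\Z^N)$.

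For the $\Gamma$-\textbf{liminf}, I fix $u\in SBV(\Omega;\Z^N)$ and a sequence $u_\e\to u$ with bounded rescaled energy. At $\calH^1$-a.e.\ $x_0\in J_u$, set $b=[u](x_0)$, $n=\nu(x_0)$ and introduce scales $\e\ll\rho\ll r$. Modifying $u_\e$ on the annulus $B_r(x_0)\setminus B_\rho(x_0)$ to a $\Z^N$-valued competitor that matches $u^0_{b,n}$ near $\partial B_r(x_0)$, and then invoking the definition of the BV-elliptic envelope \eqref{eqdefpsirel} after rescaling by $\rho$, yields a local lower bound $\ln(\rho/\e)\,\psi_\rel(b,n)$ modulo errors that vanish in the iterated limits $\e\to 0$, $\rho\to 0$, $r\to 0$. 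Summing over a disjoint Vitali covering of $J_u$ produces the required integral bound.

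For the $\Gamma$-\textbf{limsup}, a standard density argument reduces the construction of a recovery sequence to $u=b\chi_{\{x\cdot n>0\}}$. The definition of $\psi_\rel$ provides, for any $\delta>0$, an integer-valued $w$ that is a compactly supported perturbation of $u^0_{b,n}$ in $B_1$ with $\tfrac12\int_{J_w\cap B_1}\psi([w],\nu_w)\,d\calH^1\le \psi_\rel(b,n)+\delta$; one tiles the jump set of $u$ by scaled copies of $w$ and regularises each planar jump on scale $\e$ by an optimal one-dimensional transition profile. Formula \eqref{eqpsifromgamma} gives the asymptotic cost per planar piece, and the $|x|^{-3}$ decay of $\Gamma$ makes cross-interactions between well-separated pieces negligible. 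The main obstacle will be the replacement $\psi\to\psi_\rel$ in the lower bound: single-scale estimates naturally yield $\psi$, which is in general not BV-elliptic in the anisotropic vectorial setting, so the relaxation must be extracted from intermediate-scale microstructures of $u_\e$. Organising the three scales $\e\ll\rho\ll r$ so that $u_\e$ can be replaced by an integer-valued perturbation on $B_\rho(x_0)$ and \eqref{eqdefpsirel} invoked there is the technical heart of the argument.
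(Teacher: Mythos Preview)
This theorem is quoted from \cite{ContiGarroniMueller2011} and is not proved in the present paper; however, the paper summarises the strategy, and your sketch diverges from it in a way that leaves a real gap.

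In the compactness step you round $u_\e$ to $v_\e\in\Z^N$ and then say the nonlocal term controls an $H^{1/2}$-type seminorm of $v_\e$, which via slicing gives a $BV$ bound. This fails as stated: a non-constant $\Z^N$-valued function has \emph{infinite} $H^{1/2}$ seminorm (cf.\ Lemma~\ref{lemmah12bv}), so the full kernel gives no information on $v_\e$. What one must do is truncate the kernel dyadically as in $\Gamma_k$, $\Gamma_{[0,k]}$, and use Proposition~\ref{propmakeubv}; the $BV$ bound on $v_\e$ then comes from a single well-chosen scale $2^{-k}\sim\e^{1-\delta}$, not from the untruncated $H^{1/2}$ energy.

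For the $\Gamma$-liminf your plan is to blow up at $x_0\in J_u$ with three scales $\e\ll\rho\ll r$, modify $u_\e$ to a $\Z^N$-valued competitor with boundary datum $u^0_{b,n}$, and invoke the definition \eqref{eqdefpsirel} to extract $\psi_\rel$. The problem is that \eqref{eqdefpsirel} is an infimum of the \emph{line-tension} functional $\int\psi([w],\nu)\,d\calH^1$ over $BV(\Z^N)$ competitors, whereas what you have on $B_\rho$ is the \emph{nonlocal} energy of $u_\e$. The passage from the nonlocal energy to a line-tension lower bound with density $\psi$ is precisely the hard step, and it cannot be localised to a single pair of scales: as the paper stresses, the problem has no natural rescaling because all dyadic scales between $\e$ and $O(1)$ contribute equally to the $\ln(1/\e)$ factor. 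The actual argument in \cite{ContiGarroniMueller2011} first decomposes $\Gamma=\sum_k\Gamma_k$, then uses an iterative mollification to show that on \emph{most} scales the field is essentially one-dimensional, which yields $\psi$ (not $\psi_\rel$) scale by scale; the relaxation $\psi\to\psi_\rel$ is performed afterwards, at the line-tension level, entirely decoupled from the nonlocality. Your sketch collapses these two distinct steps into one invocation of \eqref{eqdefpsirel} and thereby skips the dyadic/mollification machinery that is the technical heart of the proof; the missing ingredient is a version of Proposition~\ref{propconstruction}. Your limsup outline, by contrast, is essentially correct and close in spirit to the construction used.
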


{In (\ref{eqdefeltrel1}) and in the rest of this paper we use standard notation on functions of bounded variations, which we now briefly recall.
The elements of $BV(\Omega;\R^N)$, $\Omega\subset\R^2$, are the functions in $L^1(\Omega;\R^N)$ whose distributional derivative $Du$ is a bounded measure on $\Omega$  ($D^su$ denotes the part of this measure which is singular with respect to the Lebesgue measure).
In turn, $SBV(\Omega;\R^N)$ denotes the space of special functions of bounded variation, which are the functions in  $BV(\Omega;\R^N)$ whose distributional gradient can be characterized as $Du=\nabla u \calL^2 + [u]\otimes \nu\calH^1\LL J_u$. Here $J_u$ is a 1-rectifiable set called the jump set of $u$, and it is {defined as} the set of points for which $u$ does not have an approximate limit. The normal to this set is denoted by $\nu$,
and $[u]=u^+-u^-$ denotes the jump of the function $u$ across the set $J_u$. For any $\calH^1$-a.e. $x\in J_u$ one has
\begin{equation*}
 \lim_{r\to0} \frac{\calH^1(J_u\cap B_r(x))}{2r}=1
 \end{equation*}
 and
 \begin{equation*}
 \lim_{r\to0} \frac{1}{r^2}\int_{B_r(x)\cap \{\pm(y-x)\cdot\nu>0\}}|u-u^\pm(x)|dy=0.
 \end{equation*}
We refer to \cite{AmbrosioFP} for details.
 }

The main difficulty in the proof of Theorem \ref{theo:linetension} is the fact 
that this problem has no natural rescaling, since infinitely many scales 
asymptotically contribute to the energy. The proof is based on a dyadic 
decomposition of the interaction kernel, which is also used in Section 
\ref{sectlowerb} below, and on an iterative mollification technique which 
permits to show that microstructure can only appear at few scales, and 
therefore that on the average scale there is no microstructure. 
This permits to pass from the nonlocal functional $E_\eps$ to a line-tension functional with the  unrelaxed energy $\psi$; the relaxation from $\psi$ to $\psi_\rel$ takes then place at the line-tension level and does not couple to the nonlocality of $E_\eps$.
We refer to \cite{ContiGarroniMueller2011} for details and we remark that a similar 
formula for the subcritical regime also holds  without the 
geometric restriction to a single plane, if the dislocations are 
dilute \cite{ContiGarroniOrtiz2015}.

For later reference we observe that 
(\ref{eqgammaposdeftheointro}) and the definition in
(\ref{eqpsifromgamma}) 
imply
that $c|b|\le \psi(b,n)$ for all $b\in\Z^N$, $n\in S^1$, and by
(\ref{eqdefpsirel}) we obtain with Jensen's inequality
\begin{equation}\label{eqpsipsirelcoerc}
 c|b|\le \psi_\rel(b,n)\le \psi(b,n) \hskip5mm \text{ for all $b\in\Z^N$, $n\in S^1$}.
\end{equation}

The transition from scaled  line-tension functionals to a functional with a continuous distribution of dislocations was studied in 
 \cite{ContiGarroniMueller2017-BUMI}. Here two effects are present. Firstly, by 
the rescaling the discrete nature of the dislocations is lost, and 
macroscopically one only sees the effective dislocation density, passing from 
$\psi_\rel$ to $g_0$. This corresponds to recovering continuous slips from 
superposition of many atomic-scale plastic slips, and naturally relates to 
strain-gradient plasticity models. Secondly, and already at the macroscopic 
scale, one relaxes $g_0$ (which is finite only on {certain} rank-one matrices) to 
the macrosopic energy $g$. As usual in problems with  linear growth, the gradient constraint 
does not affect the effective energy density which turns out to be convex, 
 see \cite{KirchheimKristensen2016} for a general statement.
 
 \begin{theorem}\label{theo:homog}
  Let $\Omega\subset\R^2$ be a bounded connected Lipschitz domain,
and let $\psi:\Z^N\times S^1\to[0,\infty)$  obey $\frac1c |b|\le \psi(b,n)$ 
for all $b\in\Z^N$ and $n\in S^1$. The functionals
\begin{equation}\label{eqdefeltno1}
 \ELT_\sigma[u,\Omega]:=
 \begin{cases}
 \displaystyle\int_{J_u\cap\Omega} \sigma \psi\left(\frac{[u]}{\sigma}, \nu\right) d\calH^1,
  & \text{ if } u\in SBV(\Omega; \sigma \Z^N),\\
  \infty, & \text{ otherwise,}
 \end{cases}
\end{equation}
are equicoercive with respect to the strong $L^1$ topology, in the sense that 
if $\sigma_k>0$, $\sigma_k\to0$ and $\ELT_{\sigma_k}[u_k,\Omega]\le C$ then there are $d_k\in 
\R^N$ and $u\in BV(\Omega;\R^N)$ such that {$u_k-d_k$ has a subsequence that converges to} $u$ in $L^1(\Omega;\R^N)$.

Further,  with respect to this topology, 
\begin{equation*}
\Gamma\hbox{-}\lim_{\sigma\to 0}   \ELT_\sigma[\cdot,\Omega]= \ELT_0[\cdot,\Omega],
\end{equation*}
where
\begin{equation}\label{eqdefelt0}
 \ELT_0[u,\Omega]:=
 \begin{cases}
 \displaystyle\int_{\Omega}  g\left( \frac{dDu}{d|Du|}\right) d|Du|,
  & \text{ if } u\in BV(\Omega;\R^N),\\
  \infty ,& \text{ otherwise.}
 \end{cases}
\end{equation}
 The function $g$ is positively 1-homogeneous,  obeys $\frac1c |A|\le g(A)\le 
c|A|$, and coincides with the convex envelope of $g_0$, which is defined from 
$\psi$ via $\psi_\rel$ as in 
(\ref{eqdefpsirel}) and  (\ref{eqg0}).
\def\no{
 \begin{equation}\label{eqdefg0theo}
 g_0(A):= \begin{cases}
 0 & \text{ if } A=0,\\
                \psi_\rel(b,n) & \text{ if } A=b\otimes n \text{ for } b\in \R^N, n\in S^1\\
                \infty & \text{ otherwise,}
               \end{cases}
\end{equation}
where $\psi_\rel$ is the $BV$-elliptic envelope of $\psi$, as defined in 
(\ref{eqdefpsirel}).
}
\end{theorem}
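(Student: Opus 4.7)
The proof has three parts: equicoercivity, a $\Gamma$-liminf inequality, and a matching $\Gamma$-limsup inequality. The conceptual picture is that the limit $\sigma\to 0$ triggers two independent relaxations of the line-tension density: the $BV$-elliptic relaxation from $\psi$ to $\psi_\rel$, which rearranges dislocation patterns at the scale of individual lines, and the convex-envelope relaxation from $g_0$ to $g$, which mixes slip systems of different orientations at a macroscopic laminate scale. The limit is therefore an integral with density $g$ against the measure $Du$, which is well defined because $g$ is convex and positively $1$-homogeneous.

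\emph{Equicoercivity and $\Gamma$-liminf.} The hypothesis $\psi(b,n)\ge \tfrac1c|b|$ gives $\sigma\psi([u]/\sigma,\nu)\ge \tfrac1c|[u]|$ for any $[u]\in\sigma\Z^N$, so $|Du_\sigma|(\Omega)\le c\,\ELT_\sigma[u_\sigma]$; subtracting the mean $d_\sigma$ and invoking $BV$-compactness yields an $L^1$-convergent subsequence. For the liminf, we use $\psi_\rel\le\psi$, the identity $g_0(b\otimes n)=\psi_\rel(b,n)$ for $b\in\Z^N$, the inequality $g\le g_0$, and the positive $1$-homogeneity of $g$ to estimate, for any jump $[u_\sigma]\in\sigma\Z^N$,
\begin{equation*}
\sigma\psi([u_\sigma]/\sigma,\nu)\ge \sigma\psi_\rel([u_\sigma]/\sigma,\nu)=\sigma g_0\bigl(([u_\sigma]/\sigma)\otimes\nu\bigr)\ge \sigma g\bigl(([u_\sigma]/\sigma)\otimes\nu\bigr)=g([u_\sigma]\otimes\nu).
\end{equation*}
Since $u_\sigma\in SBV$ has vanishing absolutely continuous gradient, integrating over $J_{u_\sigma}$ gives $\ELT_\sigma[u_\sigma]\ge \int_\Omega g(dDu_\sigma/d|Du_\sigma|)\,d|Du_\sigma|$, and Reshetnyak's lower semicontinuity theorem for convex positively $1$-homogeneous integrands on $BV$ delivers the liminf inequality under $L^1$-convergence with equibounded total variation.

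\emph{$\Gamma$-limsup and main obstacle.} A density and diagonalization argument (Reshetnyak continuity of $\ELT_0$ in strict $BV$-convergence) reduces the construction to piecewise affine target functions. On an affine piece with $\nabla u=F$, pick a Carath\'eodory decomposition $F=\sum_{j=1}^K \lambda_j b_j\otimes n_j$ with $b_j\in\Z^N$, $n_j\in S^1$, $\lambda_j\ge 0$, $\sum_j\lambda_j\le 1$, and $\sum_j \lambda_j\psi_\rel(b_j,n_j)\le g(F)+\delta$; for each $j$, pick an almost-optimal $BV$-elliptic competitor $v_j$ from (\ref{eqdefpsirel}) whose $\psi$-surface energy is within $\delta$ of $\psi_\rel(b_j,n_j)$. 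Build $u_\sigma$ as a nested laminate: partition the affine piece into cells devoted to each $j$ with volume fraction $\lambda_j$; inside every such cell, place flat jumps of magnitude $\sigma b_j$ with normal $n_j$ at period $\sigma$, so that the macroscopic gradient averages to $\lambda_j b_j\otimes n_j$ in that cell and to $F$ overall; then replace every flat jump locally by a rescaled translated copy of $v_j$ supported in a tube of width $\rho_\sigma\ll\sigma$ around the jump line, which converts the per-unit-length $\psi$-cost from $\psi(b_j,n_j)$ to $\psi_\rel(b_j,n_j)+O(\delta)$. Summing yields energy density $\sum_j\lambda_j\psi_\rel(b_j,n_j)+O(\delta)\le g(F)+2\delta$, and thin transition layers between affine pieces carry vanishing extra energy. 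The main technical obstacle is the coordination of three well-separated scales --- macroscopic cell size, laminate period $\sigma$, and sub-laminate tube width $\rho_\sigma$ --- so that neighbouring $BV$-elliptic micro-perturbations and laminates of different orientations do not interact; the asserted properties of $g$ (linear growth, positive $1$-homogeneity) in turn follow from the linear growth of $\psi_\rel$ (splitting $b\in\Z^N$ into unit Burgers vectors) and from the subadditivity $\psi_\rel(kb,n)\le k\psi(b,n)$ built into the $BV$-elliptic envelope.
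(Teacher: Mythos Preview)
Your approach differs from the paper's. The paper gives no direct construction; it reduces to two prior results. First, for each fixed $\sigma$ the functional $\ELTrel_\sigma$ built from $\psi_\rel$ is the $L^1$-lower semicontinuous envelope of $\ELT_\sigma$, by \cite{ContiGarroniMassaccesi2015}. Since $\Gamma$-limits are unchanged by passing to the relaxation termwise (\cite[Prop.~6.11]{Dalmaso1993}), the $\Gamma$-limits of $\ELT_\sigma$ and of $\ELTrel_\sigma$ coincide. Second, $\psi_\rel$ is $BV$-elliptic with two-sided linear growth, so \cite[Theorem~1.1]{ContiGarroniMueller2017-BUMI} gives $\Gamma\text{-}\lim \ELTrel_\sigma=\ELT_0$ directly; coercivity is inherited from $\psi_\rel\le\psi$. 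Your liminf via Reshetnyak is correct and essentially reproduces the corresponding step in the cited paper.

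Your limsup sketch has a gap that the paper's reduction avoids. The only hypothesis on $\psi$ is the lower bound $\psi(b,n)\ge\tfrac1c|b|$; no upper bound is assumed, and in the intended application (\ref{eqpsifromgamma}) the density is quadratic in $b$. In your nested laminate, the interfaces between cells carrying different slip systems $b_j\otimes n_j$, and the transition layers between affine pieces of the target, carry jumps whose magnitude in lattice units is of order $(\text{cell size})/\sigma$. With a superlinear $\psi$ the $\ELT_\sigma$-cost of each such interface diverges as $\sigma\to0$, so the assertion that ``thin transition layers carry vanishing extra energy'' does not follow from scale separation alone. The remedy you mention at the very end---splitting a jump $b$ into $O(|b|)$ unit jumps so as to pay only $\sum_i\psi(\pm e_i,\cdot)$ per unit length---must be applied at \emph{every} such boundary, not only inside the $BV$-elliptic competitors $v_j$; compare the triangle construction later in the paper (Proposition~\ref{prophomoglinftypolyh}), which addresses exactly this difficulty. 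The paper's two-step route sidesteps the issue entirely: once $\psi$ is replaced by $\psi_\rel$, which has linear growth from above as well as below, all interface costs are automatically linear in the jump and the recovery-sequence construction of \cite{ContiGarroniMueller2017-BUMI} applies without further modification.
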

We observe that in \cite[Eq.~(1.4)]{ContiGarroniMueller2017-BUMI} there is a typo, the integral should be (as in (\ref{eqdefelt0}) above) over $\Omega$, not $J_u$.
\begin{proof}
 This statement reduces to \cite[Theorem 1.1]{ContiGarroniMueller2017-BUMI} in the case that $\psi$ is $BV$-elliptic (i.e., if $\psi=\psi_\rel$), after a change in notation.
 
 In the general case   we observe that for any (fixed) $\sigma>0$ by \cite{ContiGarroniMassaccesi2015}
 the functional 
\begin{equation}\label{eqdefeltrel}
 \ELTrel_\sigma[u,\Omega]:=
 \begin{cases}
 \displaystyle\int_{J_u\cap\Omega} \sigma \psi_\rel\left(\frac{[u]}{\sigma}, \nu\right) d\calH^1,
  & \text{ if } u\in SBV(\Omega; \sigma \Z^N),\\
  \infty, & \text{ otherwise,}
 \end{cases}
\end{equation}
 is the relaxation of $\ELT_\sigma$, and that 
$\psi_\rel$ is $BV$-elliptic and has linear growth, in the sense that
\begin{equation*}
 \frac1c |b| \le \psi_\rel(b,n) \le c|b| \hskip5mm \text{ for all } b\in\Z^N, n\in S^1.
\end{equation*} 
Therefore the $\Gamma$-limit of  the sequence 
$\ELT_\sigma$ is the same as the $\Gamma$-limit of the sequence 
$\ELTrel_\sigma$, we refer to \cite[Prop.~6.11]{Dalmaso1993} for details. 

Finally, as mentioned above \cite[Theorem 1.1]{ContiGarroniMueller2017-BUMI}  implies that 
the sequence $\ELTrel_\sigma$
$\Gamma$-converges to $\ELT_0$. Coercivity is also inherited, since $\psi_\rel\le \psi$. This  concludes the proof.
\end{proof}

 In proving our main result we shall have to take into account both these 
results, but also include the effects of the long-range elastic energy, which 
scales as the squared $H^{1/2}$ norm of $u$. We remark that $H^{1/2}$ is 
singular with respect to the natural spaces of piecewise constant functions 
entering the above results, hence one cannot recover Theorem 
\ref{theorem1} from a direct combination of Theorem \ref{theo:linetension} and 
Theorem \ref{theo:homog}.

\section{Compactness}
\label{sectcpt}
The functions $u_\e/\ln(1/\e)$ belong to the space $H^{1/2}(\Omega;\R^N)$, the limit however will belong also to 
$BV(\Omega;\R^N)$. The key step in the proof of the compactness result is to produce a new sequence of functions,
called $v_\e$ in the proof below, which belong to $BV(\Omega;\Z^N)$ and are close to $u_\e$.

\begin{proposition}[Compactness]\label{propcompactness}
 Let $\Omega\subset\R^2$ be a bounded connected Lipschitz domain,
and let $E_\eps[\cdot, \Omega]$ be defined as in (\ref{uno}), with 
$W$ and $\Gamma$ which satisfy 
(\ref{eqWdist})--(\ref{eqgammaposdeftheointro}).
 
Let $u_\eps$ be a family with $E_\eps[u_\eps,\Omega]\le M (\ln(1/\eps))^2$ for some $M>0$.
Then there are a function $u\in BV(\Omega;\R^N)\cap H^{1/2}(\Omega;\R^N)$,  vectors $d_\eps\in\Z^N$ and a subsequence $\eps_k\to0$ such that
\begin{equation}
 \frac{1}{\ln(1/{\eps_k})} (u_{\eps_k}-d_{\eps_k}) \to u \hskip5mm\text{  in $L^2(\Omega;\R^N)$.}
\end{equation}
% Further, for any $\omega\subset\subset\Omega$ there are sequences $v_\eps\in BV(\omega;\Z^N)$ and $k_\eps\in\Z$ such that
% $\|v_\eps-u_\eps\|_{L^1(\omega)}\to0$, 
%\begin{equation*}
%E^{k_\eps}[v_\eps,\omega]\le E_\e[u_\eps,\Omega]\left( 1 + \frac{C}{ (\ln
%1/\e)^{1/2}}\right)\,, 
%\end{equation*}
%and $\eps^{1-7/8}\le 2^{-k}\le \eps^{3/4}$,
%where $E^k$ is defined below.
\end{proposition}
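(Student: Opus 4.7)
The plan is to establish compactness in three intertwined steps: first a direct $H^{1/2}$-compactness argument from the nonlocal part of the energy, then the construction of a $\Z^N$-valued $BV$ approximation $v_\eps$, and finally a quantitative total-variation bound on $v_\eps$.

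\emph{Step 1 ($H^{1/2}$-compactness).} The lower bound on $\hat\Gamma$ in (\ref{eqgammaposdeftheointro}) combined with the form (\ref{eq:derGammaintro}) of $\Gamma$ shows that the nonlocal part of $E_\eps$ dominates the squared Gagliardo seminorm,
\begin{equation*}
[u_\eps]_{H^{1/2}(\Omega)}^2\le c\,E_\eps[u_\eps,\Omega]\le cM(\ln(1/\eps))^2.
\end{equation*}
Choose $d_\eps\in\Z^N$ to be a nearest integer vector to the average of $u_\eps$ over $\Omega$. Poincar\'e's inequality on the bounded connected Lipschitz domain $\Omega$ then yields $\|u_\eps-d_\eps\|_{L^2}\le C\ln(1/\eps)$, so the rescaled functions $\tilde u_\eps:=(u_\eps-d_\eps)/\ln(1/\eps)$ are bounded in $H^{1/2}(\Omega;\R^N)$. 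By the compact Rellich embedding $H^{1/2}(\Omega)\hookrightarrow\hookrightarrow L^2(\Omega)$, a subsequence $\tilde u_{\eps_k}$ converges strongly in $L^2$ and weakly in $H^{1/2}$ to some $u\in H^{1/2}(\Omega;\R^N)$.

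\emph{Step 2 (Construction of $v_\eps\in BV(\Omega;\Z^N)$).} The double-well estimate (\ref{eqWdist}) gives
\begin{equation*}
\int_\Omega\dist^2(u_\eps,\Z^N)\,dx\le c\eps\,E_\eps[u_\eps,\Omega]\le CM\eps(\ln(1/\eps))^2,
\end{equation*}
so the bad set $K_\eps:=\{x\in\Omega:\dist(u_\eps(x),\Z^N)>1/4\}$ has $|K_\eps|\to 0$. On $\Omega\setminus K_\eps$ define $v_\eps(x)$ to be the unique nearest integer vector to $u_\eps(x)$; on $K_\eps$ extend $v_\eps$ by rounding a mollification of $u_\eps$ at an intermediate scale chosen so that $v_\eps\in BV(\Omega;\Z^N)$ and $\|u_\eps-v_\eps\|_{L^2}=o(\ln(1/\eps))$. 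In particular $(v_\eps-d_\eps)/\ln(1/\eps)$ converges to the same limit $u$ in $L^2$.

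\emph{Step 3 (BV bound and conclusion).} The key quantitative estimate is
\begin{equation*}
|Dv_\eps|(\Omega)\le C\,\frac{E_\eps[u_\eps,\Omega]}{\ln(1/\eps)}\le CM\ln(1/\eps),
\end{equation*}
obtained via the dyadic decomposition of $\Gamma$ over scales between $\eps$ and $\diam\Omega$ that underlies the proof of Theorem \ref{theo:linetension}: each integer jump of $v_\eps$ across a surface of unit length must be regularized in $u_\eps$ over scales $\ge\eps$, and the kernel $|z|^{-3}$ then produces a logarithmic charge of at least $c\ln(1/\eps)$ in the nonlocal energy. Granting this bound, $(v_\eps-d_\eps)/\ln(1/\eps)$ is bounded in $BV(\Omega;\R^N)$ (the $L^1$ bound being inherited from Step 1), so a further subsequence converges in $L^1$ to some $u'\in BV(\Omega;\R^N)$ by the classical $BV$ compactness theorem, and uniqueness of the limit forces $u=u'\in BV(\Omega;\R^N)\cap H^{1/2}(\Omega;\R^N)$.

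The chief technical obstacle is the quantitative bound in Step 3, which is essentially a non-asymptotic version of the lower-bound part of Theorem \ref{theo:linetension} and is tightly coupled to the construction of $v_\eps$ in Step 2: the mollification scales used to define $v_\eps$ on $K_\eps$ and the dyadic scales used to lower-bound the nonlocal energy must be chosen consistently so that the perimeter produced by the rounding procedure is indeed charged $c\ln(1/\eps)$ per unit length by the nonlocal term.
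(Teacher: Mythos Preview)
Your overall strategy---$H^{1/2}$ compactness from the nonlocal term, then a $\Z^N$-valued $BV$ approximation $v_\eps$ with a total-variation bound of order $\ln(1/\eps)$---is exactly the paper's. The gap is in the construction of $v_\eps$ and the proof of the $BV$ bound, which you yourself flag as the ``chief technical obstacle.'' Your recipe in Step~2 (round $u_\eps$ pointwise on the good set, patch with a rounded mollification on the small bad set $K_\eps$) does not in general produce a $BV$ function: even where $\dist(u_\eps,\Z^N)\le 1/4$, the function $u_\eps$ may oscillate between neighbourhoods of different lattice points on arbitrarily fine scales consistent with a fixed $H^{1/2}$ bound, and rounding then creates unbounded perimeter; the smallness of $|K_\eps|$ says nothing about $\calH^1(\partial K_\eps)$ either, so the patching boundary is also uncontrolled. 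The correct order of operations is \emph{mollify first, then round}, and the mollification scale must be chosen through a good-scale argument in the dyadic decomposition of the kernel. This is precisely the content of Proposition~\ref{propmakeubv} (taken from \cite[Prop.~4.1]{ContiGarroniMueller2011}), which the paper invokes as a black box: it delivers both $v_\eps\in BV(\omega;\Z^N)$ with $\|u_\eps-v_\eps\|_{L^1(\omega)}\to 0$ and the bound $|Dv_\eps|(\omega)\le C E_\eps[u_\eps,\Omega]/\ln(1/\eps)$ in one stroke.

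One further point: the paper carries out the $BV$ step only on $\omega\subset\subset\Omega$ (the mollification needs a buffer to $\partial\Omega$), obtains $|Du|(\omega)\le cM$ with $c$ independent of $\omega$, and then lets $\omega\uparrow\Omega$ to conclude $u\in BV(\Omega;\R^N)$. Your Step~3 asserts the bound directly on $\Omega$, which is not what the underlying construction gives.
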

Before starting the proof we recall the basic definition and some properties of the space $H^{1/2}$. 
We define the homogeneous $H^{1/2}$ seminorm of a measurable function $f:\Omega\to\R$, for  an open set $\Omega\subset\R^2$, by
\begin{equation}\label{eqdefh12}
 [f]_{H^{1/2}(\Omega)}^2 := \int_{\Omega\times\Omega} \frac{ |f(x)-f(y)|^2}{|x-y|^3} dx dy.
\end{equation}
We observe that if $\Omega$ is bounded then for any $f$ there is $a_f\in\R$ such that
\begin{equation*}
 \| f- a_f\|_{L^2(\Omega)} \le c_\Omega [f]_{H^{1/2}(\Omega)}.
\end{equation*}
This can be proven directly from the definition
of $[f]_{H^{1/2}(\Omega)}$, letting $a_f$ be the average of $f$ over $\Omega$.
If $\Omega$ is bounded and Lipschitz, then any sequence $f_j$ which converges 
weakly in $L^2$ and is bounded in the $H^{1/2}$ seminorm converges strongly in 
$L^2$, see for example 
\cite[Section 7]{ValdinociFractional}. One can see that this norm is equivalent the one obtained by the trace method.

We {next} recall that $BV(\Omega;\Z^N)$ is (up to constants) disjoint from $H^{1/2}(\Omega;\R^N)$, which only contains functions that ``do not jump''. This fact is made precise by the following Lemma.
\begin{lemma}\label{lemmah12bv}
 Let $\Omega\subset\R^2$ be open,  $u\in BV(\Omega)\cap H^{1/2}(\Omega)$. Then $\calH^1(J_u)=0$.
\end{lemma}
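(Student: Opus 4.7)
The plan is to argue by contradiction: assuming $u\in BV(\Omega)\cap H^{1/2}(\Omega)$ and $\calH^1(J_u)>0$, I derive $[u]_{H^{1/2}(\Omega)}^2=+\infty$. The key local fact is that the characteristic function of a half-plane has infinite $H^{1/2}$-seminorm on any ball meeting the jump line. After rotating so that the interface is $\{y_2=0\}$, the step function $v(y)=b\chi_{\{y_2>0\}}$ with $b\neq 0$ satisfies
$$[v]_{H^{1/2}(B_1)}^2 = 2|b|^2\int_{B_1^+\times B_1^-}\frac{dx\,dy}{|x-y|^3},$$
where $B_1^\pm = B_1\cap\{\pm y_2>0\}$. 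Integrating first in the tangential variable $x_1-y_1$ extracts a factor of order $(s+t)^{-2}$ with $s=x_2>0$, $t=-y_2>0$, and the remaining $\int_0^\varepsilon\!\int_0^\varepsilon(s+t)^{-2}ds\,dt$ diverges logarithmically. Hence $[v]_{H^{1/2}(B_1)}^2=+\infty$.

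At $\calH^1$-a.e.\ $x_0\in J_u$ the standard $BV$-structure result recalled in the excerpt gives that the rescalings $u_r(y):=u(x_0+ry)$ converge in $L^1(B_1)$ as $r\to 0$ to such a step function, with jump $[u](x_0)\neq 0$ and normal $\nu(x_0)$. A direct change of variables in the defining double integral yields the scaling identity
$$[u_r]_{H^{1/2}(B_1)}^2=\frac{1}{r}\,[u]_{H^{1/2}(B_r(x_0))}^2,$$
and $L^1$-lower semicontinuity of the $H^{1/2}$-seminorm (Fatou applied after extracting an a.e.-convergent subsequence) gives $\liminf_{r\to 0}\tfrac{1}{r}[u]_{H^{1/2}(B_r(x_0))}^2\geq [v]_{H^{1/2}(B_1)}^2=+\infty$. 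Equivalently, for every $C>0$ and every such $x_0$ there exists $r_*(C,x_0)>0$ with $[u]_{H^{1/2}(B_r(x_0))}^2\geq Cr$ for all $r\leq r_*(C,x_0)$.

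The conclusion follows from a covering argument, exploiting the fact that the set function $A\mapsto[u]_{H^{1/2}(A)}^2$ is superadditive on disjoint Borel subsets of $\Omega$ (its defining integral runs over $A\times A$). Since $\calH^1$-a.e.\ $x_0\in J_u$ has density one in $J_u$, a Vitali-type selection produces pairwise disjoint balls $B_{r_i}(x_i)\subset\Omega$ with $x_i\in J_u$, $r_i\leq r_*(C,x_i)$, and $\sum_i 2r_i\geq \tfrac{1}{2}\calH^1(J_u)$. Summing the local lower bounds,
$$[u]_{H^{1/2}(\Omega)}^2\ \geq\ \sum_i [u]_{H^{1/2}(B_{r_i}(x_i))}^2\ \geq\ C\sum_i r_i\ \geq\ \tfrac{C}{4}\,\calH^1(J_u),$$
so letting $C\to\infty$ forces $[u]_{H^{1/2}(\Omega)}=+\infty$, the desired contradiction. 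The main technical obstacle is arranging the Vitali selection so that each ball is individually small enough to fall within its own pointwise threshold $r_*(C,x_i)$ while still capturing a fixed fraction of $\calH^1(J_u)$; this is standard fare for $1$-rectifiable sets but must be set up carefully.
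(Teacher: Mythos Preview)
Your argument is correct and follows a genuinely different route from the paper's proof, though both share the same skeleton: cover the jump set by small disjoint balls via Vitali--Besicovitch and exploit superadditivity of $A\mapsto [u]_{H^{1/2}(A)}^2$ on disjoint sets.

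The difference lies in how the local contribution on each ball is extracted. The paper first fixes $\delta>0$, restricts to $J_u^{(\delta)}=\{|[u]|>\delta\}$, and then uses absolute continuity of the $H^{1/2}$ integral: it chooses $\rho$ so small that the contribution from pairs $(x,y)$ with $|x-y|<\rho$ is below a prescribed $\eta$. On each small ball the $H^{1/2}$-Poincar\'e inequality bounds the $L^2$-oscillation of $u$ by the local $H^{1/2}$ energy, while the $BV$ blow-up gives a lower bound on the $L^1$-oscillation. Combining these yields the direct quantitative estimate $\calH^1(J_u^{(\delta)})\le C_\delta\,\eta$ for every $\eta>0$. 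Your argument instead uses the exact scaling identity $[u_r]_{H^{1/2}(B_1)}^2=r^{-1}[u]_{H^{1/2}(B_r(x_0))}^2$, lower semicontinuity of the seminorm under $L^1$-convergence of the blow-ups, and the divergence $[b\chi_{\{y\cdot\nu>0\}}]_{H^{1/2}(B_1)}^2=+\infty$, then concludes by contradiction. Your route is more conceptual (it is the standard blow-up/lower-semicontinuity paradigm) and avoids the Poincar\'e step; the paper's route is more quantitative and avoids manipulating infinite quantities.

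One small point to tidy up: for $u\in BV(\Omega)$ the jump set $J_u$ need not have finite $\calH^1$-measure, so the covering step ``$\sum_i 2r_i\ge \tfrac12\calH^1(J_u)$'' is not literally meaningful in that case. The fix is immediate and in fact parallels the paper: work on $J_u^{(\delta)}$, which has $\calH^1(J_u^{(\delta)})\le \delta^{-1}|Du|(\Omega)<\infty$, or on any $1$-rectifiable subset of $J_u$ with positive finite $\calH^1$-measure. Everything else in your outline goes through as written.
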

{In particular, if $u\in BV(\Omega;\Z^N)$ then $Du=[u]\otimes \nu\calH^1\LL J_u$, hence in this case $\calH^1(J_u)=0$ implies $Du=0$.}
\begin{proof}
We claim that for any $\delta>0$ we have
\begin{equation}\label{eqh12bvjdelta}
 \calH^1(\{x\in J_u: |u^+-u^-|(x)>\delta\})=0.
\end{equation}
Since $\delta$ is arbitrary, this will imply the assertion.

We write $J^{(\delta)}_u:=\{x\in J_u: |u^+-u^-|(x)>\delta\}$.
Fix $\eta>0$, and choose $\rho>0$ such that
\begin{equation}\label{eqh12bveta}
 \int_{\Omega\times \Omega} \frac{|u(x)-u(y)|^2}{|x-y|^3} \chi_{B_\rho(x)}(y) dxdy\le \eta.
\end{equation}
This is possible, since $u\in H^{1/2}(\Omega)$ and by dominated convergence this integral converges to zero as $\rho\to0$.
For $\calH^1$-a. e. $x\in J^{(\delta)}_u$ one has
\begin{equation*}
 \lim_{r\to0} \frac{1}{r^2} \int_{B_r(x)} |u(y)-u_x(y)| dy =0,
\end{equation*}
where 
\begin{equation*}
u_x(y):=\begin{cases}
         u^+(x),& \text{ if } (y-x)\cdot \nu>0\,,\\
         u^-(x),& \text{ if } (y-x)\cdot \nu\le0\,,
        \end{cases}
      \end{equation*}
with $u^\pm(x)$ the traces and $\nu$ the normal to the jump set in $x$. 
Therefore for $\calH^1$-a. e. $x\in J^{(\delta)}_u$
\begin{eqnarray*}%\label{covering1}
 \lim_{r\to0}  \frac{1}{\pi r^2} \int_{B_r(x)} |u(y)-(u)_r| dy &
 \\ =\lim_{r\to0}  \frac{1}{\pi r^2} \int_{B_r(x)} |u_x(y)-(u_x)_r| dy 
=& \displaystyle\frac12 
|u^+-u^-|(x)>\frac{\delta}{2}
\end{eqnarray*}
where $(f)_r$ denotes the average of $f$ over the ball  $B_r(x)$. Moreover from the 1-rectifiability of $J_u$, and then of $J^{(\delta)}_u$, we have
\begin{equation*}\label{covering2}
\lim_{r\to0} \frac{1}{2r}
\calH^1(J^{(\delta)}_u\cap B_r(x))=1.
\end{equation*}
By Vitali-Besicovich's covering Lemma (see for instance \cite{AmbrosioFP}, Theorem 2.19) we can choose 
countably many disjoint balls $B_j:=B(x_j,r_j)$ such that for all $j$ one has
$r_j\in(0,\rho/2)$, $x_j\in J^{(\delta)}_u$, 
\begin{equation}\label{eql1blowup}
\frac{1}{\pi r_j^2} \int_{B_j} |u(y)-u_j| dy \ge \frac\delta4  
\end{equation}
where $u_j$ is the average of $u$ over $B_j$,  with
$r_j\le \calH^1 {(J^{(\delta)}_u\cap \overline B_j)}\le 3r_j$, and satisfying
%with $r_j\le \calH^1(J^{(\delta)}_u\cap B_j)\le 3r_j$, and
%$ \calH^1(J^{(\delta)}_u\cap B(x_j,5r_j))\le 12 r_j$.
%By the latter property we obtain $\calH^1(
%J^{(\delta)}\setminus \cup_j B_j)\le \frac{11}{12} 
%\calH^1(
%J^{(\delta)})$. Iterating countably many times this procedure we obtain countably many balls, also denoted by $(B_j)_{j\in\N}$, which have all stated
%properties and additionally satisfy 
$\calH^1{(J^{(\delta)}_u\setminus \cup_j\overline B_j)}=0$.

For each $B_j$ we estimate, as in the proof of 
 Poincar\'e's inequality for $H^{1/2}$, using Jensen's inequality, 
\begin{equation*}
\begin{split}
 \int_{B_j} |u(y)-u_j|^2 dy \le&
 \frac{1}{\pi r_j^2}\int_{B_j\times B_j} |u(y)-u(x)|^2 dxdy \\
 \le&
 \frac{8r_j}{\pi}\int_{B_j\times B_j} \frac{|u(y)-u(x)|^2}{|x-y|^3} dxdy .
\end{split}
\end{equation*}
In particular, recalling (\ref{eql1blowup}),
\begin{equation*}
 \frac{\delta^2 }{16} \pi r_j^2\le  \int_{B_j} |u(y)-u_j|^2 dy \le
 \frac{8r_j}{\pi}\int_{B_j\times B_j} \frac{|u(x)-u(y)|^2}{|x-y|^3} dxdy .
\end{equation*}
We divide by $r_j$, sum over $j$, and obtain
\begin{equation*}
\begin{split}
 \calH^1(J^{(\delta)}_u)&\le \sum_j
\calH^1({J^{(\delta)}_u\cap \overline B_j})\le \sum_j 3r_j \\
&
 \le C_\delta \int_{\cup_j B_j\times B_j}\frac{|u(x)-u(y)|^2}{|x-y|^3} dxdy\le C_\delta\eta,
\end{split}
\end{equation*}
where $C_\delta$ depends only on $\delta$ and in the last step we used $2r_j< \rho$ and (\ref{eqh12bveta}). Since $\eta$ was arbitrary, the proof of (\ref{eqh12bvjdelta}) and therefore of the Lemma is concluded.
\end{proof}

In order to prove the compactness result we recall some notation and a result
from 
\cite{ContiGarroniMueller2011}.
We define the truncated kernels by
\begin{equation*}
 \Gamma_{[0,k]}:=\sum_{i=0}^k \Gamma_i
\end{equation*}
where
\begin{equation*}
 \Gamma_k(x):=
 \begin{cases}
 \hat\Gamma(x/|x|)( 2^{3(k+1)}-2^{3k}), & \text{ if } 0<|x|\le 2^{-k-1},\\
  \hat\Gamma(x/|x|)(|x|^{-3} -2^{3k}), & \text{ if } 2^{-k-1}<|x|\le 2^{-k},\\
  0 , &\text{ if } |x|>2^{-k},
 \end{cases}
\end{equation*}
see Figure \ref{figtrunc} for an illustration, and the corresponding truncated energies by
%\begin{equation*}
% E^{k}[u,\Omega]:=\int_{\Omega\times\Omega} \Gamma_{[0,k]}(x-y) (u(x)-u(y))\cdot (u(x)-u(y)) dxdy
%\end{equation*}
%and
\begin{equation}\label{eqdefestk}
 E^{*}_k[u,\Omega]:=\int_{\Omega\times\Omega} \Gamma_{k}(x-y) (u(x)-u(y))\cdot (u(x)-u(y)) dxdy.
\end{equation}

\begin{figure}
 \begin{center}
  \includegraphics[width=6cm]{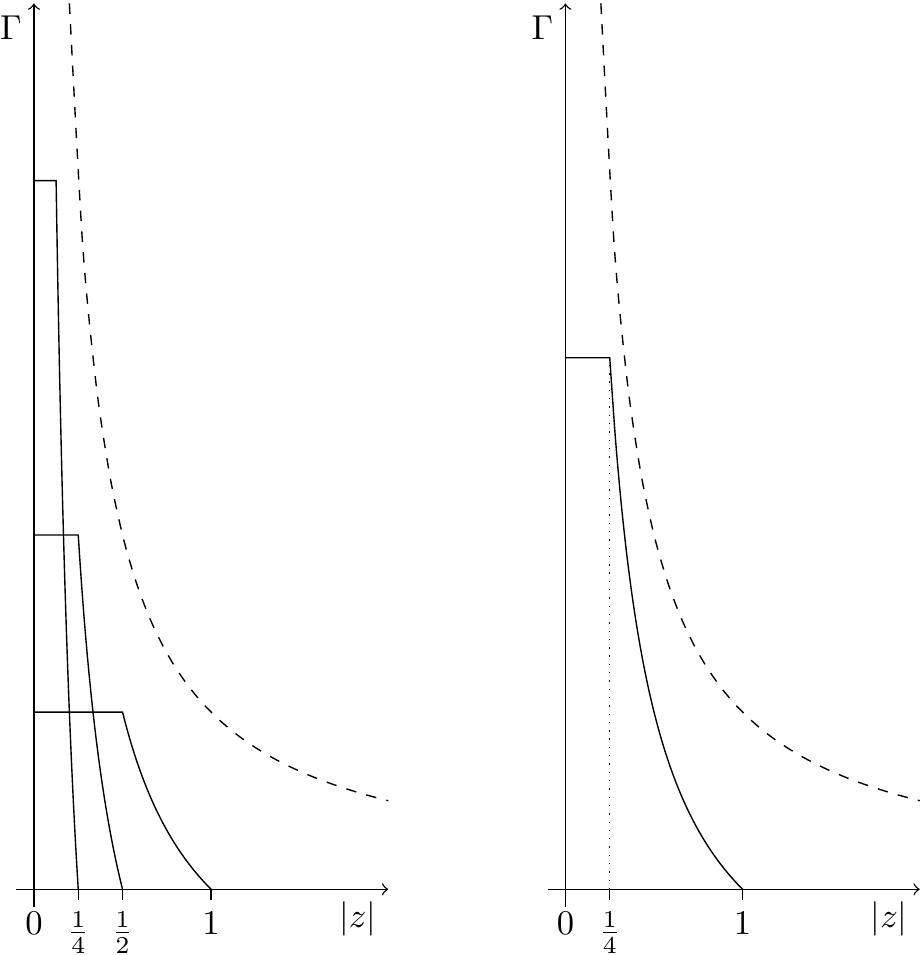}
 \end{center}
\caption{Sketch of $\Gamma$, $\Gamma_0$, $\Gamma_1$ and $\Gamma_2$ (left) and of $\Gamma$, $\Gamma_{[0,2]}$ (right). For clarity we have plotted $1/|z|$ instead of $1/|z|^3$.}
\label{figtrunc} 
\end{figure}

The result from \cite{ContiGarroniMueller2011} that we use concerns the 
approximation of regular fields by $BV$ phase fields.
We observe that the symbol $E_\eps$ is used in 
\cite{ContiGarroniMueller2011} for the energy already divided by 
 $\ln (1/\eps)$, i.e., for the quantity
$E_\eps^{2011}:=E_\eps/(\ln (1/\eps))$.%, and correspondingly for the other ones.
\begin{proposition}[\protect{\cite[Prop. 4.1]{ContiGarroniMueller2011}}]\label{propmakeubv}
 Let $\Omega\subset\R^2$ be a {bounded Lipschitz domain,}
and let $E_\eps[\cdot, \Omega]$ be defined as in (\ref{uno}), with 
$W$ and $\Gamma$ which satisfy 
(\ref{eqWdist})--(\ref{eqgammaposdeftheointro}).

Assume that 
$\omega\subset\subset\Omega$ and  $\delta\in(0,1/2)$. 
Then there exists a constant $C>0$ such that for every  sufficiently small $\e>0$ (on
a scale set by $\delta$ and $\dist(\omega,\partial\Omega)$) and every
 $u\in L^2(\Omega;\R^N)$ 
there are $k\in\N$ and 
$v\in BV(\omega;\Z^N)$ such that
\begin{equation}\label{eqsumehstar}
\sum_{h=0}^k E_h^*[v,\omega]
%=E^k[v,\omega]
\le E_\e[u,\Omega]\left( 1 + \frac{C}{\delta (\ln(1/\e))^{1/2}}\right)\,, 
\end{equation}
\begin{equation}\label{eqpropmakeubvbvnorm}
|Dv|(\omega)\le \frac{C}{\delta} \frac{E_\e[u,\Omega]}{\ln(1/\eps)}\,,
\end{equation}
and 
\begin{equation*}
\e^{1-\delta/2}\le 2^{-k} \le \e^{1-\delta}\,.
\end{equation*}
Furthermore,
\begin{equation*}
\|u-v\|_{L^1(\omega)} \le C 2^{-k/2} \left( \frac{E_\e[u,\Omega]}{\ln(1/\eps)}\right)^{1/2}\,.
\end{equation*}
The constants depend only on $W$ and $\Gamma$.
\end{proposition}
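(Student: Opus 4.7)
The plan is to construct $v$ from $u$ by mollifying at a well-chosen dyadic scale $2^{-k}$ and then rounding pointwise to $\Z^N$. The window $\eps^{1-\delta/2}\le 2^{-k}\le\eps^{1-\delta}$ contains of order $\frac\delta2\log_2(1/\eps)$ admissible values of $k$, and the error factor $C/(\delta(\ln(1/\eps))^{1/2})$ in (\ref{eqsumehstar}) will emerge from an averaging/pigeonhole argument over this window together with a Cauchy--Schwarz trade between linear and quadratic error terms.

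First I would record two basic consequences of the energy bound. From $W(\xi)\ge c^{-1}\dist^2(\xi,\Z^N)$ one has $\int_\Omega\dist^2(u,\Z^N)\,dx\le c\eps E_\eps[u,\Omega]$, so $u$ is close to $\Z^N$ in mean square at scale $\eps$. From the annular decomposition of $\Gamma$ one obtains $\sum_{h=0}^\infty E_h^*[u,\Omega]\le CE_\eps[u,\Omega]$, up to a bounded residual kernel that contributes only at lower order. For each admissible $k$, define $u_k:=u*\rho_{2^{-k}}$ with a standard mollifier $\rho_{2^{-k}}$, and set $v_{k,t}(x):=\lfloor u_k(x)+t\rfloor\in\Z^N$ componentwise for $t\in[0,1)^N$. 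The standard coarea inequality then gives
\begin{equation*}
\int_{[0,1)^N}|Dv_{k,t}|(\omega)\,dt\le C\|\nabla u_k\|_{L^1(\omega)}.
\end{equation*}

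The heart of the proof is the comparison
\begin{equation*}
\sum_{h=0}^{k} E_h^*[v_{k,t},\omega]\le \sum_{h=0}^{k}E_h^*[u,\Omega]+R_1(k)+R_2(k,t).
\end{equation*}
For $h\le k$ the kernel $\Gamma_h$ varies on a scale not smaller than $2^{-k}$, so Jensen's inequality applied to the positive semidefinite quadratic form with kernel $\Gamma_h$ gives $E_h^*[u_k,\omega]\le E_h^*[u,\Omega]+R_1^{(h)}(k)$, where the error is quadratic in $\|u-u_k\|_{L^2}$ localized near $\partial\omega$; the rounding error $R_2$ uses the pointwise bound $|v_{k,t}-u_k|\le\sqrt N$ together with the mean-square $\dist^2$ estimate (mollification preserves closeness to $\Z^N$ in mean square). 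A parallel treatment of $\|\nabla u_k\|_{L^1(\omega)}$ exploits $\nabla u_k=u*\nabla\rho_{2^{-k}}$ and the annular representation to give $\|\nabla u_k\|_{L^2(\omega)}\le C\,2^{k/2}\bigl(\sum_{h\ge k}E_h^*[u,\Omega]\bigr)^{1/2}$, after which Cauchy--Schwarz converts $L^2$ to $L^1$ over $\omega$.

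The main obstacle is to obtain all three bounds for a \emph{single} choice of $(k,t)$. I would sum the errors $R_1(k)$ and $\|\nabla u_k\|_{L^1(\omega)}^2$ over $k$ in the window: each dyadic slab $E_h^*[u,\Omega]$ is counted $O(1)$ times, so the sum is controlled by $CE_\eps[u,\Omega]$, and by pigeonhole at least one $k^*$ satisfies the corresponding bound with an extra factor $1/(\delta\ln(1/\eps))$. The Cauchy--Schwarz step then turns this into the stated $1/(\delta(\ln(1/\eps))^{1/2})$ multiplicative error in (\ref{eqsumehstar}) and the $1/(\delta\ln(1/\eps))$ factor in (\ref{eqpropmakeubvbvnorm}). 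Within the chosen $k^*$, the shift $t$ is selected by averaging in $[0,1)^N$ via the coarea inequality. Finally $\|u-v_{k^*,t}\|_{L^1(\omega)}$ is estimated by triangle inequality: $\|u-u_{k^*}\|_{L^1(\omega)}\le C\,2^{-k^*/2}[u]_{H^{1/2}(\Omega)}$ by the Poincaré-type bound for mollification of $H^{1/2}$ functions, and $\|u_{k^*}-v_{k^*,t}\|_{L^1(\omega)}$ is absorbed using that $v_{k^*,t}$ is within a bounded distance of $u_{k^*}$ while $u$ is close to $\Z^N$ in mean square; together these yield the claimed bound $C 2^{-k/2}(E_\eps[u,\Omega]/\ln(1/\eps))^{1/2}$.
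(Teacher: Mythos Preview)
The paper does not give a proof of this proposition; it is quoted verbatim from \cite[Prop.~4.1]{ContiGarroniMueller2011}. Your overall strategy---mollify $u$ at a dyadic scale $2^{-k}$ in the prescribed window, round pointwise to $\Z^N$, and select the scale (and the rounding shift) by a pigeonhole/averaging argument over the $\sim\tfrac{\delta}{2}\log_2(1/\eps)$ admissible values---is precisely the approach of the original source, so in outline your proposal is correct.

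There is one concrete gap. In the last paragraph you estimate $\|u-u_{k^*}\|_{L^1(\omega)}\le C\,2^{-k^*/2}[u]_{H^{1/2}(\Omega)}$; since $[u]_{H^{1/2}}^2\le C E_\eps[u,\Omega]$ this yields only $C\,2^{-k/2}(E_\eps[u,\Omega])^{1/2}$, a factor $\sqrt{\ln(1/\eps)}$ too large. The missing saving comes from the \emph{scale-localized} bound
\[
\|u-u_k\|_{L^2(\omega)}^2\le C\,2^{-k}E_k^*[u,\Omega],
\]
which follows from Jensen and the fact that $\Gamma_k\gtrsim 2^{3k}$ on $B_{2^{-k}}$. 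One then adds to the pigeonhole the requirement that $E_{k^*}^*[u,\Omega]$ not exceed a fixed multiple of the average, i.e.\ $\lesssim E_\eps[u,\Omega]/\ln(1/\eps)$; combined with the bound above this gives exactly the stated $C\,2^{-k/2}(E_\eps[u,\Omega]/\ln(1/\eps))^{1/2}$. A related caution: the phrase ``mollification preserves closeness to $\Z^N$ in mean square'' is not literally true (mollifying across a jump produces values near $1/2$); what one actually uses is the $1$-Lipschitz inequality $\dist(u_k,\Z^N)\le |u-u_k|+\dist(u,\Z^N)$ together with the same scale-localized control of $\|u-u_k\|_{L^2}$.
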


\begin{proof}[Proof of Proposition \ref{propcompactness}]
We start by proving {that the sequence $u_\eps-d_\eps$, for a suitable choice of $d_\eps\in\Z^N$, converges} in $L^2$ to a limit which is contained in $H^{1/2}$. By coercivity of $\Gamma$,
\begin{equation*}
 E_\eps[u_\eps,\Omega] \ge c[u_\eps]_{H^{1/2}(\Omega)}^2.
\end{equation*}
%where $[\cdot]_{H^{1/2}(\Omega)}$ denotes the homogeneous $H^{1/2}$ seminorm defined in (\ref{eqdefh12}).
Therefore the sequence $u_\eps/(\ln (1/\eps))$ is bounded in the homogeneous 
$H^{1/2}$ seminorm.  By {the Poincar\'e inequality} we can find vectors $\hat d_\eps\in\R^N$ such that
$(u_\eps-\hat d_\eps)/(\ln (1/\eps))$ is bounded in $H^{1/2}$ and has a subsequence 
which converges 
{weakly in $H^{1/2}$ and}
strongly in $L^2$ to a limit $u$. We choose 
$ d_\eps\in\Z^N$ such that $|d_\eps-\hat d_\eps|\le N^{1/2}$ and observe that
$(d_\eps-\hat d_\eps)/(\ln (1/\eps))\to0$. 

It remains to show that the limit $u$ is in $BV(\Omega;\R^N)$. Let $\omega\subset\subset\Omega$. By Proposition \ref{propmakeubv} with $\delta=1/4$, for sufficiently small $\eps$
there are $k_\eps\in\N$ and $v_\eps\in BV(\omega;\Z^N)$ such that
 $\eps^{7/8}\le 2^{-k_\eps}\le \eps^{3/4}$,
\begin{equation*}
\|v_\eps-u_\eps\|_{L^1(\omega)}\le C M^{1/2} 2^{-k_\eps/2} (\ln (1/\eps))^{1/2} 
\le C M^{1/2}  \eps^{3/8} (\ln (1/\eps))^{1/2} 
\end{equation*}
%\begin{equation}
%% E^{k_\eps}[v_\eps,\omega]\le (1+\frac{C}{(\ln(1/\eps))^{1/2}}) E_\eps[u_\eps,\Omega]
%\end{equation}
and
\begin{equation*}
 |Dv_\eps|(\omega)\le c \frac{ E_\eps[u_\eps,\Omega]}{\ln (1/\eps)}\le c M \ln (1/\eps)\,.
\end{equation*}
In particular, after extracting the same subsequence as above, 
$(v_\eps-d_\eps)/\ln (1/\eps)$ converges to $u$ in $L^1(\omega;\R^N)$ and $(v_\eps-d_\eps)/\ln (1/\eps)$ is bounded in $BV(\omega;\R^N)$. 
Therefore, after possibly extracting a further subsequence, we obtain 
\begin{equation*}
 \frac{v_\eps-d_\eps}{\ln (1/\eps)} \weakto u \text{ weakly in } BV(\omega;\R^N)
\end{equation*}
and
\begin{equation*}
 |Du|(\omega)\le c M
\end{equation*}
with $c$ not depending on $\omega$.
Since the bound does not depend on $\omega$ we conclude that $u\in BV(\Omega;\R^N)$.
\end{proof}

\section{Density and approximation}
\label{secedenstiy}
We give here a  refinement of Theorem \ref{theo:homog}, 
that will be needed in the proof of the upper bound. The main difference is that we can approximate with functions which are at the same time 
polyhedral and uniformly bounded in $L^\infty$. This is clearly only possible if the limit is contained in $L^\infty$.
The refined upper bound requires an extra assumption on the energy density which is fulfilled  by the function $\psi$ defined in \eqref{eqpsifromgamma} (see Lemma \ref{lemma64cgo2015newa}).

We recall that $v\in BV_\loc(\Omega;\R^N)$, {$\Omega\subset\R^2$ open}, is polyhedral if $Dv=\sum_{h=0}^H [v_h]\otimes n_h\calH^1\LL  
S_h$, where $H\in\N$,  $S_h=[a_h,b_h]$ is a segment in $\R^2$, and $n_h\in S^1$ is 
normal to $S_h$.

 \begin{proposition}\label{prophomoglinftypolyh}
  Let $\Omega\subset\R^2$ be a bounded Lipschitz domain,
and let $\psi:\Z^N\times S^1\to[0,\infty)$  obey 
$$
\frac1c \, |b|\le \psi(b,n)\leq (1+c|n-n'|) \psi(b,n') \text{ for all } b\in\Z^N, n,n'\in S^1.
$$ 
% Under the assumptions of Theorem \ref{theo:homog}, 
% if additionally $\psi(b,n)\le c|b|$ for all $b$ and $n$, %there is a function 
%$f:(0,\infty)\to(0,\infty)$ such that
 For any $u\in L^\infty(\Omega;\R^N)\cap BV(\Omega;\R^N)$, any $\delta>0$, and any sequence 
$\sigma_j\to0$ there is a sequence of polyhedral functions $v_j\in 
SBV(\Omega;\sigma_j\Z^N)$ such that $v_j\to u$ in $L^1$ and
 \begin{equation*}
  \limsup_{j\to\infty} \ELT_{\sigma_j}[v_j,\Omega] \le \ELT_0[u,\Omega] + \delta
 \end{equation*}
with $\sup_j\|v_j\|_{L^\infty(\Omega)}+|Dv_j|(\Omega)<\infty$
{and $\ELT_\sigma$, $\ELT_0$ as in (\ref{eqdefeltno1}) and (\ref{eqdefelt0}).}
\end{proposition}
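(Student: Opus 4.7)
The plan is in three steps: first approximate $u$ in $L^\infty\cap BV$ by polyhedral functions with energy convergence; then discretize each polyhedral approximation to $\sigma_j\Z^N$-values through a laminate microstructure; and finally extract a diagonal subsequence.

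\emph{Step 1 (polyhedral approximation of $u$).} I would first produce polyhedral $u_k\in L^\infty(\Omega;\R^N)\cap BV(\Omega;\R^N)$ with $\|u_k\|_{L^\infty}\le\|u\|_{L^\infty}$, $u_k\to u$ in $L^1$, and $\ELT_0[u_k,\Omega]\to\ELT_0[u,\Omega]$. Since by Theorem \ref{theo:homog} the density $g$ is convex, positively $1$-homogeneous and of linear growth, Reshetnyak's continuity theorem yields continuity of $\ELT_0$ under strict $BV$-convergence; hence it suffices to arrange $u_k\to u$ in $L^1$ together with $|Du_k|(\Omega)\to|Du|(\Omega)$. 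This is standard, via mollification, polyhedral approximation of the level sets of each component, and componentwise truncation at level $\|u\|_{L^\infty}$, which preserves the $L^\infty$ bound and cannot increase the total variation.

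\emph{Step 2 (laminate discretization of polyhedral $u_k$).} For fixed polyhedral $u_k$ I would construct a polyhedral $v_{k,j}\in SBV(\Omega;\sigma_j\Z^N)$ with $v_{k,j}\to u_k$ in $L^1$ as $j\to\infty$, $\|v_{k,j}\|_{L^\infty}\le\|u_k\|_{L^\infty}+O(\sigma_j)$, and $\limsup_j\ELT_{\sigma_j}[v_{k,j},\Omega]\le\ELT_0[u_k,\Omega]$. On each jump segment of $u_k$ with vector $b_h\in\R^N$ and normal $n_h$, the definition of $g$ as convex envelope of $g_0$ together with Carath\'eodory's theorem gives a decomposition $b_h\otimes n_h=\sum_\alpha\lambda_\alpha b_\alpha\otimes n_\alpha$ with $b_\alpha\in\Z^N$, $n_\alpha\in S^1$, $\sum_\alpha\lambda_\alpha\psi_\rel(b_\alpha,n_\alpha)\le g(b_h\otimes n_h)+o(1)$, and uniformly bounded atoms $|b_\alpha|\le C$ (large atoms carry small weight by coercivity of $\psi_\rel$ and can be subdivided into unit atoms or discarded). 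I would then replace the segment by a thin polyhedral laminated strip with parallel sub-edges of jump $\sigma_j b_\alpha$ in direction $n_\alpha$ and relative frequency $\lambda_\alpha$, ordered via a Steinitz-type argument so that the partial sums remain within $O(\sigma_j)$ of the line segment joining $u_k^-$ and $u_k^+$, keeping the $L^\infty$-oscillation of order $\sigma_j$. The Lipschitz hypothesis $\psi(b,n)\le(1+c|n-n'|)\psi(b,n')$ absorbs the cost of reorienting the $n_\alpha$ to match the actual geometry of the strip. A final finer-scale microstructure, inserted at a scale $\eta_j\ll\sigma_j$, then realizes $\psi_\rel$ from $\psi$ through the $BV$-elliptic envelope definition (\ref{eqdefpsirel}).

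\emph{Step 3 (diagonal) and main obstacle.} A standard diagonal extraction $v_j:=v_{k(j),j}$ with $k(j)\to\infty$ slowly enough then yields the desired sequence. The main technical difficulty lies in Step 2: the laminate must simultaneously produce integer-valued jumps in $\sigma_j\Z^N$, preserve the polyhedral structure, respect the $L^\infty$ bound, and realize the convex-envelope density $g$ rather than $\psi$. The Lipschitz-in-$n$ hypothesis on $\psi$ is essential for controlling the angular cost inside any laminate decomposition, and the Steinitz-type reordering of Carath\'eodory atoms of bounded norm is what keeps the $L^\infty$-oscillation of order $\sigma_j$ along the entire staircase.
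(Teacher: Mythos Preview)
Your approach differs substantially from the paper's. The paper does not first approximate $u$ by real-valued polyhedral functions and then discretize via laminates; instead it invokes the recovery sequence of Theorem~\ref{theo:homog} as a black box to obtain $u_j\in SBV(\Omega;\sigma_j\Z^N)$ with the correct energy, makes each $u_j$ polyhedral via Lemma~\ref{lemma64cgo2015new}, and only then enforces the $L^\infty$ bound by truncating at a level $M_j$ chosen through the coarea formula. Since $\psi$ may grow superlinearly in $b$, this truncation creates new jumps of size up to $M_j$ whose $\psi$-cost is not controlled; the paper breaks each such jump into unit jumps by inserting thin triangles around the offending segments, and it is precisely here that the Lipschitz-in-$n$ hypothesis is used (via the consequence $\psi(b,n)\le\hat c$ for $b\in\{-1,0,1\}^N\cap\Z^N$). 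Your Step~2, by contrast, amounts to re-proving the upper bound of Theorem~\ref{theo:homog} with an additional $L^\infty$ control built into the laminate.

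Your route can probably be completed, but the justification you give for ``uniformly bounded atoms $|b_\alpha|\le C$'' is wrong as written. Subdividing a large $b_\alpha=\sum_ie_i$ into unit vectors yields $\sum_i\psi_\rel(e_i,n_\alpha)\ge\psi_\rel(b_\alpha,n_\alpha)$ by subadditivity of the $BV$-elliptic envelope, so splitting \emph{raises} the energy and destroys near-optimality of the Carath\'eodory decomposition. Discarding a term with large $|b_\alpha|$ and small $\lambda_\alpha$ changes the matrix sum by $\lambda_\alpha|b_\alpha|$, which coercivity only bounds by $c^{-1}\lambda_\alpha\psi_\rel(b_\alpha,n_\alpha)$ --- possibly of order $g(b_h\otimes n_h)$, not small. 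What does work is to drop the uniformity claim: for each fixed polyhedral $u_k$ there are finitely many jumps $b_h$, hence an atom bound $C_k$ depending on $k$, and the Steinitz oscillation is $O(\sigma_jC_k)$ rather than $O(\sigma_j)$. Choosing $k(j)\to\infty$ slowly enough that $\sigma_jC_{k(j)}\to0$ in the diagonal step then still gives $\sup_j\|v_j\|_{L^\infty}<\infty$. You should phrase Step~2 this way.
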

The proof of Proposition \ref{prophomoglinftypolyh} is based on 
the following density result, that was proven in 
\cite[Lemma.~6.4]{ContiGarroniOrtiz2015}, building on 
 \cite[Corollary 2.2]{ContiGarroniMassaccesi2015}. The key ingredient in this 
construction is the scalar result in \cite[Th. 4.2.20]{Federer}.
 The related 
situation for partition problems was studied in 
 \cite[Theorem 2.1]{BraidesContiGarroni2017}.
\begin{lemma}[\protect{\cite[Lemma.~6.4]{ContiGarroniOrtiz2015}}]
\label{lemma64cgo2015}
 Assume that $\psi:\Z^N\times S^1\to\R$ satisfies
 \begin{equation*}
  \psi(b,n)\le (1+c|n-n'|) \psi(b,n') \text{ for all } b\in\Z^N, n,n'\in S^1.
 \end{equation*}
Assume that $u\in BV(\R^2;\Z^N)$ and let $\Omega\subset\R^2$ be a bounded 
Lipschitz set with $|Du|(\partial\Omega)=0$.

Then for any $\eta\in(0,1)$ there are $r>0$, a polyhedral $v\in BV(\R^2;\Z^N)$ 
and a bijective map $f\in C^1(\R^2;\R^2)$ such that
\begin{equation}\label{eqlemma64cgo2015a}
 |D(u\circ f)-Dv|(\R^2)\le \eta,
\end{equation}
\begin{equation}\label{eqlemma64cgo2015b}
 |Df(x)-\Id|+|f(x)-x|\le\eta \text{ for all }x\in\R^2,
\end{equation}
and
\begin{equation}\label{eqlemma64cgo2015c}
 \int_{J_v\cap\Omega_r} \psi([v],\nu) d\calH^1\le (1+c\eta)  \int_{J_u\cap\Omega} 
\psi([u],\nu) d\calH^1 + c \eta,
\end{equation}
where $\Omega_r:=\{x\in\R^2: \dist(x,\Omega)<r\}$. Further, the restriction of 
$v$ to $\Omega$ is polyhedral.
\end{lemma}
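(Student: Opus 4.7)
Lemma \ref{lemma64cgo2015} provides a polyhedral approximation of an integer-valued $BV$ function $u$ on $\R^2$, with control on the anisotropic jump functional $\int\psi([u],\nu)\,d\calH^1$ and a simultaneous near-identity bijection $f$. The approach, following \cite{ContiGarroniMassaccesi2015} and building on Federer's polyhedral approximation theorem \cite[Th. 4.2.20]{Federer}, is to approximate the level-set partition of $u$ by a polyhedral partition of the ambient space and then realign it onto the rectifiable jump set via a $C^1$ diffeomorphism $f$.

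First I would reduce to $\tilde u$ taking only finitely many values. Fixing a large ball $B_R\supset\Omega_r$, and noting $|Du|(B_R)<\infty$ with $u\in\Z^N$, all but finitely many values $c\in\Z^N$ produce level sets of arbitrarily small perimeter; collapsing the tail values to nearest kept ones yields $\tilde u$ with values in a finite set $\{c_1,\ldots,c_m\}\subset\Z^N$, agreeing with $u$ outside a set of small $\psi$-energy (at most $\eta/4$, using the linear growth of $\psi$ in $|b|$ implicit in the setting to bound $\psi$-energy by $\calH^1$-perimeter up to a constant). The function $\tilde u$ then induces a finite partition of $B_R$ into sets $E_1,\ldots,E_m$ of finite perimeter. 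I would apply \cite[Cor. 2.2]{ContiGarroniMassaccesi2015}, the partition-valued refinement of Federer's result, to obtain polyhedral sets $P_1,\ldots,P_m$ partitioning $B_R$ and a $C^1$ bijection $f$ with $|f-\Id|+|Df-\Id|\le\eta$, such that after applying $f$ the polyhedral boundary network $\bigcup_k\partial P_k$ is close in Hausdorff measure to the rectifiable network $\bigcup_k\partial^*E_k$. Setting $v:=\sum_k c_k\chi_{P_k}$ gives a polyhedral $v\in BV(\R^2;\Z^N)$, whose restriction to $\Omega$ is polyhedral by construction, and the closeness of the two partitions together with $|f-\Id|+|Df-\Id|\le\eta$ yields \eqref{eqlemma64cgo2015a}--\eqref{eqlemma64cgo2015b}.

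For the anisotropic energy bound \eqref{eqlemma64cgo2015c} I would invoke the hypothesis $\psi(b,n)\le(1+c|n-n'|)\psi(b,n')$. One arranges the approximation so that each segment of $\partial P_k$ has normal within angle $\eta$ of the approximate tangent normal at the corresponding point of $\partial^*E_k$ (this is precisely the normal-direction control that enters Federer's theorem) and so that $\calH^1(\partial P_k\cap\Omega_r)\le(1+c\eta)\calH^1(\partial^*E_k\cap\Omega)+c\eta$. Summing over $k$ and over the pairwise interfaces of the partition, one gets $\int_{J_v\cap\Omega_r}\psi([v],\nu)\,d\calH^1\le(1+c\eta)\int_{J_{\tilde u}\cap\Omega}\psi([\tilde u],\nu)\,d\calH^1+c\eta$; adding the truncation loss from the first step yields \eqref{eqlemma64cgo2015c} with the same form of constants.

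The main obstacle is the \emph{joint} polyhedral approximation of the partition: Federer's theorem applies to a single set of finite perimeter, but polyhedrizing each $E_k$ independently would produce overlapping or gapping polyhedra with inconsistent normal estimates at multiple junctions of the partition. Assembling a genuine polyhedral partition through a single aligning diffeomorphism $f$ is exactly what \cite[Cor. 2.2]{ContiGarroniMassaccesi2015} provides, via an inductive construction that preserves triple points and extends the straightening boundary-by-boundary without disturbing pieces already straightened. Once this combinatorial-geometric step is in place, the anisotropic-energy control via the Lipschitz-in-normal hypothesis on $\psi$ becomes a direct computation on each polyhedral segment.
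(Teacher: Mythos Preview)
The paper does not prove Lemma~\ref{lemma64cgo2015}; it is quoted from \cite[Lemma~6.4]{ContiGarroniOrtiz2015}, and the surrounding text only records that the proof there is built on \cite[Corollary~2.2]{ContiGarroniMassaccesi2015} and, ultimately, on Federer's polyhedral approximation theorem \cite[Th.~4.2.20]{Federer}. Your sketch is fully consistent with that description: you reduce to a finite $\Z^N$-valued partition, invoke the partition-valued polyhedral approximation together with an aligning $C^1$ diffeomorphism $f$ close to the identity, and then use the Lipschitz-in-normal hypothesis on $\psi$ to transfer the anisotropic energy from $u$ to $v$. There is therefore nothing further in the present paper to compare your argument against.

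One small point: in your truncation step you invoke ``the linear growth of $\psi$ in $|b|$ implicit in the setting,'' but the only hypothesis on $\psi$ in this lemma is the Lipschitz-in-normal condition, which says nothing about growth in $b$. The clean way around this is to note that if $\int_{J_u\cap\Omega}\psi([u],\nu)\,d\calH^1=\infty$ then \eqref{eqlemma64cgo2015c} is vacuous, and otherwise use the finiteness of this integral together with $|Du|(\R^2)<\infty$ to choose the truncation threshold so that the discarded interfaces carry both small $\psi$-energy and small total variation.
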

We start by deriving a variant of this Lemma.
\begin{lemma}\label{lemma64cgo2015new}
 Assume that $\psi:\Z^N\times S^1\to\R$ satisfies
 \begin{equation*}
  \psi(b,n)\le (1+c|n-n'|) \psi(b,n') \text{ for all } b\in\Z^N, n,n'\in S^1.
 \end{equation*}
Assume that $u\in BV(\Omega;\sigma \Z^N)$ for some $\sigma>0$ with
$\Omega\subset\R^2$  bounded  and Lipschitz.

Then for any $\eta\in(0,1)$ there is a polyhedral $v\in BV(\Omega;\sigma \Z^N)$ 
such that
\begin{equation}\label{lemma64cgo2015newa}
 |Dv|(\Omega)\le 5 |Du|(\Omega),
\end{equation}
\begin{equation}\label{lemma64cgo2015newb}
 \|u-v\|_{L^1(\Omega)}\le c\sigma + c \eta |Du|(\Omega),
\end{equation}
and
\begin{equation}\label{lemma64cgo2015newc}
 \int_{J_v\cap\Omega} \sigma \psi\left(\frac{[v]}{\sigma},\nu\right) d\calH^1\le (1+c\eta)  
\int_{J_u\cap\Omega} \sigma \psi\left(\frac{[u]}{\sigma},\nu\right) d\calH^1 + c \eta \sigma.
\end{equation}
\end{lemma}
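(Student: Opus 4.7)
The plan is to reduce to $\sigma=1$ by rescaling, construct an extension $\bar u \in BV(\R^2;\Z^N)$ of $\hat u := u/\sigma$ whose traces match across $\partial\Omega$ (so no jump mass is created on $\partial\Omega$), and then invoke Lemma \ref{lemma64cgo2015} directly. All three conclusions are linear in $\sigma$: the $\psi$-energy and total variation both scale by $\sigma$, while the $c\sigma$ term in (\ref{lemma64cgo2015newb}) corresponds after rescaling to a constant-size $L^1$ drift. Hence it suffices to prove the statement for $\hat u \in BV(\Omega;\Z^N)$.

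For the extension, I first subtract a median $d_0\in\Z^N$ of $\hat u$, so that $\|\hat u-d_0\|_{L^1(\Omega)}\le C|D\hat u|(\Omega)$ by the BV Poincar\'e inequality, and in particular $\int_{\partial\Omega}|\hat u^--d_0|\,d\calH^1\le C|D\hat u|(\Omega)$ by the BV trace inequality. Then I extend along normals in a thin tubular neighborhood $\{0<\dist(\cdot,\Omega)<\delta\}$ using the nearest-point projection $\pi$ onto $\overline\Omega$, setting $\bar u(x):=\hat u(\pi(x))$ there and $\bar u := d_0$ farther out. The values remain in $\Z^N$; the matching of traces across $\partial\Omega$ gives $|D\bar u|(\partial\Omega)=0$; the tangential variation on the strip together with the jump at $\{\dist=\delta\}$ bound $|D\bar u|(\R^2\setminus\Omega)\le C|D\hat u|(\Omega)$ once $\delta$ is chosen below the positive reach of $\overline\Omega$.

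Applying Lemma \ref{lemma64cgo2015} to $\bar u$ and $\Omega$ with a small parameter (comparable to $\eta$) produces polyhedral $\bar v\in BV(\R^2;\Z^N)$, a $C^1$-bijection $f$ close to the identity, and the estimates (\ref{eqlemma64cgo2015a})--(\ref{eqlemma64cgo2015c}). Setting $v:=\sigma(\bar v+d)|_\Omega$ for a suitable $d\in\Z^N$ absorbing any global drift, the bound (\ref{lemma64cgo2015newc}) follows at once from (\ref{eqlemma64cgo2015c}) since $\bar u=\hat u$ on $\Omega$ and $|D\bar u|(\partial\Omega)=0$ imply $J_{\bar u}\cap\Omega=J_{\hat u}\cap\Omega$. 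The total variation bound (\ref{lemma64cgo2015newa}) follows from $|D\bar v|(\Omega)\le |D(\bar u\circ f)|(\Omega)+\eta$, the change-of-variables comparison $|D(\bar u\circ f)|(\Omega)\le (1+c\eta)|D\bar u|(f(\Omega))$, and the fact that $f(\Omega)\setminus\Omega$ is a strip of width $O(\eta)$ on which $|D\bar u|$ is of order $\eta |D\hat u|(\Omega)$. The $L^1$ bound (\ref{lemma64cgo2015newb}) uses a BV--Poincar\'e inequality giving $\|\bar v-\bar u\circ f-c_0\|_{L^1(\Omega)}\le C\eta$ for some $c_0\in\R^N$, together with $\|\bar u\circ f-\hat u\|_{L^1(\Omega)}\le C\eta|D\hat u|(\R^2)$; rounding $c_0$ to the nearest $d\in\Z^N$ contributes at most an additive constant of order one, producing the $c\sigma$ term after rescaling.

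The main obstacle is the construction of the $\Z^N$-valued trace-matching extension: standard partition-of-unity BV extension operators destroy $\Z^N$-valuedness, so the projection-based construction must be carried out directly, relying on the Lipschitz regularity of $\pi$ in a tubular neighborhood of a Lipschitz boundary. A secondary technical point is that the restriction of a polyhedral function on $\R^2$ to $\Omega$ may fail to be polyhedral in the strict finite-segment sense if a segment of $\bar v$ partially overlaps with a flat piece of $\partial\Omega$; this is avoided by a generic-position perturbation within the flexibility offered by Lemma \ref{lemma64cgo2015}.
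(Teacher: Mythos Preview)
Your overall strategy matches the paper's proof closely: reduce to $\sigma=1$, extend to a $\Z^N$-valued $BV$ function on $\R^2$ with $|D\bar u|(\partial\Omega)=0$, invoke Lemma~\ref{lemma64cgo2015}, and then read off the three estimates --- change of variables for (\ref{lemma64cgo2015newa}), Poincar\'e plus a bound $\|\bar u-\bar u\circ f\|_{L^1}\le c\eta|D\bar u|$ for (\ref{lemma64cgo2015newb}), and (\ref{eqlemma64cgo2015c}) directly for (\ref{lemma64cgo2015newc}). The integer correction $d$ producing the $c\sigma$ term is handled the same way.

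There is, however, a genuine gap in your extension step. The nearest-point projection $\pi$ onto $\overline\Omega$ is \emph{not} Lipschitz in a tubular neighborhood of a general Lipschitz domain: positive reach requires $C^{1,1}$ boundary, and already for an L-shaped region there are exterior points arbitrarily close to $\partial\Omega$ at which $\pi$ is multi-valued. Hence $\bar u:=\hat u\circ\pi$ need not lie in $BV$ on the collar, and the estimates you claim for it are unjustified. The paper sidesteps this by reflection: since $\Omega$ is Lipschitz there is a bilipschitz map $\Phi$ of a neighborhood of $\partial\Omega$ onto itself with $\Phi|_{\partial\Omega}=\Id$ exchanging inside and outside, and one sets $\bar u:=\hat u\circ\Phi$ outside $\Omega$. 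This preserves $\Z^N$-valuedness (it is a composition), matches traces so that $|D\bar u|(\partial\Omega)=0$, and is $BV$. Two smaller remarks: your concern about polyhedrality of $v|_\Omega$ is unnecessary, since Lemma~\ref{lemma64cgo2015} already guarantees it; and the claim that $|D\bar u|$ on the strip $f(\Omega)\setminus\Omega$ is of order $\eta|D\hat u|(\Omega)$ is not correct in general --- the paper instead first reduces $\eta$ so that $|D\bar u|(\Omega_\eta)\le 2|D\hat u|(\Omega)$ and $\eta\le |D\hat u|(\Omega)$, which together with $|Df^{-1}|\le 1+\eta$ yields the factor $5$ in (\ref{lemma64cgo2015newa}).
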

\begin{proof}
 Replacing $u$ by $u/\sigma$ and $v$ by $v/\sigma$ we see that it suffices to 
consider the case $\sigma=1$. We can also assume $|Du|(\Omega)>0$ (otherwise $u$ is constant and $v=u$ will do).
 We extend $u$ to a function $u\in BV(\R^2;\Z^N)$ such that 
$|Du|(\partial\Omega)=0$, for instance, by reflection (see \cite{AmbrosioFP}). 
Possibly reducing $\eta$ we can assume $\eta\le |Du|(\Omega)$ 
 and $|Du|(\Omega_\eta)\le 2|Du|(\Omega)$, where $\Omega_\eta$ is defined as in Lemma \ref{lemma64cgo2015}.
 
 We apply Lemma \ref{lemma64cgo2015} and obtain a polyhedral $v\in BV(\R^2;\Z^N)$ and a diffeomorphism $f$ satisfying 
 (\ref{eqlemma64cgo2015a})--(\ref{eqlemma64cgo2015c}).
  We define
 \begin{equation*}
  d:= \frac{1}{|\Omega|} \int_\Omega (v-u \circ f) \, dx \in\R^N
 \end{equation*}
and choose $\tilde d\in\Z^N$ such that $|d-\tilde d|\le N^{1/2}$. We replace $v$ by 
$v-\tilde d$, so that
 \begin{equation}\label{eqvunum}
  \left|\frac{1}{|\Omega|} \int_\Omega (v-u\circ f) \, dx  \right|\le N^{1/2},
 \end{equation}
while  (\ref{eqlemma64cgo2015a})  and  (\ref{eqlemma64cgo2015c}) are not affected. 
We then estimate using  (\ref{eqlemma64cgo2015a}) and  (\ref{eqlemma64cgo2015b})
 \begin{equation*}
 \begin{split}
  |Dv|(\Omega)\le & \eta + |D(u\circ f)|(\Omega) \\
  \le & \eta + \int_{J_{u\circ f}\cap \Omega} |[u]|\circ f \,d\calH^1\\
  \le & \eta + \int_{J_{u}\cap f(\Omega)} |[u]| {|Df^{-1}\nu^\perp|}\,d\calH^1\\
    \le& \eta + |Du|(\Omega_\eta) \sup\{{|Df^{-1}e|}(x): x\in\R^2, e\in S^1\}\\
    \le & \eta + |Du|(\Omega_\eta) (1+\eta)\le 5 |Du|(\Omega).
 \end{split}
 \end{equation*}
 This proves (\ref{lemma64cgo2015newa}). Since (\ref{lemma64cgo2015newc}) follows immediately from (\ref{eqlemma64cgo2015c}), 
 it remains to prove (\ref{lemma64cgo2015newb}). 
By Poincar\'e, (\ref{eqlemma64cgo2015a}) and (\ref{eqvunum}),
 \begin{equation*}
 \begin{split}
  \|u\circ f-v\|_{L^1(\Omega)} \le N^{1/2}|\Omega| + c|D(u\circ f)-Dv|(\Omega) \le 
N^{1/2} |\Omega|+ c \eta\le c.
 \end{split}
 \end{equation*}
Now if we prove that there is $c_*>0$ such that
  \begin{equation}\label{equucf}
   \|u-u\circ f\|_{L^1(\Omega)} \le 2c_*\eta |Du|(\Omega_\eta),
  \end{equation}
 then with a triangular inequality we obtain $\|v-u\|_{L^1(\Omega)} \le 
c + 2c_*\eta|Du|(\Omega_\eta)$ and conclude the proof.
 
 It remains to prove (\ref{equucf}). We start by proving that for any $z\in C^1(\R^2;\R^N)$ we have
  \begin{equation}\label{eqzzcircfl1}
   \|z-z\circ f\|_{L^1(\Omega)} \le 2c_*\eta |Dz|(\Omega_\eta),
  \end{equation} 
  for some $c_*$ chosen below.
Indeed, 
\begin{equation*}
\begin{split}
 \int_\Omega|z(x)-z(f(x))| dx =&
 \int_\Omega\left|\int_0^1 Dz(x+t(f(x)-x))(f(x)-x) dt\right|dx \\
 \le & \|f(x)-x\|_{L^\infty(\Omega)} \int_0^1\int_\Omega |Dz|(x + t (f(x)-x)) dxdt .\\
\end{split}
\end{equation*}
For any $t\in[0,1]$, we define  $F_t:\R^2\to\R^2$ by $F_t(x):=x+t(f(x)-x)$ and 
estimate by a change of variables and (\ref{eqlemma64cgo2015b})
\begin{equation*}
 \int_\Omega |Dz|(F_t(x)) dx=\int_{F_t(\Omega)} |Dz| {|\det DF_t^{-1}|} dx
 \le \|Dz\|_{L^1(\Omega_\eta)} (1+c_* \eta).
\end{equation*}
Therefore (\ref{eqzzcircfl1}) holds for any $z\in C^1$ and, by density, (\ref{equucf}) is proven.
\end{proof}

In what follows we prove that the unrelaxed line-tension energy density $\psi$ defined in \eqref{eqpsifromgamma} satisfies the assumptions of Lemma  \ref{lemma64cgo2015new}.
\begin{lemma}\label{lemmapsilipschit}
 Let $\Gamma$ obey (\ref{eq:derGammaintro}) and (\ref{eqgammaposdeftheointro}), and let $\psi$ be defined by (\ref{eqpsifromgamma}). Then
 \begin{equation}\label{eqpsibns1}
  \psi(b,n)=\int_{S^1} |y\cdot n| \hat\Gamma(y)b\cdot b\, d\calH^1(y)
 \end{equation}
and
 \begin{equation*}
  \psi(b,n)\le (1+c|n-n'|) \psi(b,n') \text{ for all } b\in\Z^N, n,n'\in S^1.
 \end{equation*}
\end{lemma}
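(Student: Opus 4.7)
The plan is to reduce the definition of $\psi(b,n)$ to the circle formula \eqref{eqpsibns1} via a direct change of variables, and then derive the Lipschitz-type estimate from the elementary inequality $\bigl||y\cdot n|-|y\cdot n'|\bigr|\le|n-n'|$ together with the two-sided bound on $\hat\Gamma$.

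For the first identity, I parametrize the line $\{x\cdot n=1\}$ by $x(t)=n+tn^\perp$, so $|x|^2=1+t^2$. Setting $y=x/|x|\in S^1$ and $t=\tan\theta$, I get $|x|^{-3}\,dt=\cos\theta\,d\theta=(y\cdot n)\,d\mathcal{H}^1(y)$ on the open half-circle $\{y\in S^1:y\cdot n>0\}$. Substituting into \eqref{eqpsifromgamma} yields
\begin{equation*}
\psi(b,n)=2\int_{\{y\cdot n>0\}}(y\cdot n)\,\hat\Gamma(y)b\cdot b\,d\mathcal{H}^1(y),
\end{equation*}
and the symmetry $\hat\Gamma(y)=\hat\Gamma(-y)$ from \eqref{eqgammaposdeftheointro} lets me fold the integral over the full circle, producing $|y\cdot n|$ in place of $y\cdot n$. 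This gives \eqref{eqpsibns1}.

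For the Lipschitz estimate, I use \eqref{eqpsibns1} to write
\begin{equation*}
\psi(b,n)-\psi(b,n')=\int_{S^1}\bigl(|y\cdot n|-|y\cdot n'|\bigr)\hat\Gamma(y)b\cdot b\,d\mathcal{H}^1(y),
\end{equation*}
and bound the integrand pointwise by $|y\cdot(n-n')|\,\hat\Gamma(y)b\cdot b\le|n-n'|\,\hat\Gamma(y)b\cdot b$, which is legitimate since $\hat\Gamma(y)b\cdot b\ge 0$ by \eqref{eqgammaposdeftheointro}. This yields
\begin{equation*}
\psi(b,n)\le\psi(b,n')+|n-n'|\int_{S^1}\hat\Gamma(y)b\cdot b\,d\mathcal{H}^1(y).
\end{equation*}
The final step is to absorb the remainder into $\psi(b,n')$: the upper bound in \eqref{eqgammaposdeftheointro} gives $\int_{S^1}\hat\Gamma(y)b\cdot b\,d\mathcal{H}^1(y)\le 2\pi c|b|^2$, while the lower bound together with $\int_{S^1}|y\cdot n'|\,d\mathcal{H}^1(y')=4$ gives $\psi(b,n')\ge(4/c)|b|^2$. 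Combining these two inequalities produces a constant $c'$ with $\int_{S^1}\hat\Gamma(y)b\cdot b\,d\mathcal{H}^1(y)\le c'\psi(b,n')$, and hence the desired estimate with constant $c=c'$.

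There is no real obstacle; the only care needed is in the change of variables (keeping track of the half-circle and exploiting the symmetry of $\hat\Gamma$ to recover the full circle with $|y\cdot n|$), and in noting that the positive-definiteness of $\hat\Gamma$ is what makes the sign-based pointwise comparison preserve the inequality on the integral.
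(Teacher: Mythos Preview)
Your proof is correct and follows essentially the same route as the paper: the same change of variables $x=n+tn^\perp$, $t=\tan\theta$ on the line $\{x\cdot n=1\}$ to obtain the half-circle integral, the same use of the symmetry $\hat\Gamma(-y)=\hat\Gamma(y)$ to fold it into \eqref{eqpsibns1}, and the same pointwise bound $|y\cdot n|-|y\cdot n'|\le|n-n'|$ combined with the two-sided control on $\hat\Gamma$ to conclude. The only cosmetic difference is that you phrase the Jacobian identity as $|x|^{-3}\,dt=(y\cdot n)\,d\mathcal H^1(y)$ directly, whereas the paper writes both sides out in the angular variable $\theta$ and compares; the content is identical.
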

\begin{proof}
To prove the first equality, for a fixed $n\in S^1$, we write both sides in polar coordinates, measuring the angles $\theta$ with respect to the vector $n$.
Precisely,  we let $n^\perp:=(-n_2,n_1)$ and write $y=n\cos\theta+n^\perp\sin\theta$, for $\theta\in[0,2\pi)$. Then
 \begin{align*}
  \int_{S^1} |y\cdot n| \hat\Gamma(y)b\cdot b \,d\calH^1(y)&=
  \int_0^{2\pi} |\cos\theta| \hat\Gamma (n\cos\theta+n^\perp\sin\theta) b \cdot b\, d\theta.
 \end{align*}
At the same time, if $x\cdot n=1$ we write $x=n+t n^\perp$, with $t=\tan\theta$, and using (\ref{eq:derGammaintro}) {and the first condition in (\ref{eqgammaposdeftheointro}),}
\begin{align*}
 2\int_{\{x\cdot n=1\}} \Gamma(x)b\cdot b \,d\calH^1(x)&=
 2\int_{-\infty}^\infty \Gamma(n+n^\perp t ) b\cdot b\,  dt\\
 &=
 2\int_{-\pi/2}^{\pi/2} \Gamma(n+n^\perp\tan\theta ) b\cdot b\, \frac{1}{\cos^2\theta} d\theta\\
 &=
 2\int_{-\pi/2}^{\pi/2} \Gamma\left(\frac{n\cos\theta+ n^\perp\sin\theta}{\cos\theta} \right) b\cdot b\, \frac{1}{\cos^2\theta} d\theta\\
&=
 2\int_{-\pi/2}^{\pi/2} \hat\Gamma(n\cos\theta+ n^\perp\sin\theta ) b\cdot b\, \cos\theta d\theta\\
&=
 \int_{0}^{2\pi} \hat\Gamma(n\cos\theta+ n^\perp\sin\theta ) b\cdot b\, |\cos\theta |d\theta\,.
\end{align*}
This concludes the proof of (\ref{eqpsibns1}).

To prove the estimate we then write, with $|y\cdot n|-|y\cdot n'|\le |y\cdot(n-n')|$ and (\ref{eqgammaposdeftheointro}),
\begin{align*}
 \psi(b,n)-\psi(b,n')&=  \int_{S^1} (|y\cdot n|-|y\cdot n'|) \hat\Gamma(y)b\cdot b\, d\calH^1(y)\\
& \le |n-n'| \int_{S^1}  \hat\Gamma(y)b\cdot b\, d\calH^1(y) \le c |n-n'| |b|^2 
\end{align*}
and, again from  (\ref{eqgammaposdeftheointro}), $|b|^2\le c \psi(b,{n'})$. This concludes the proof. 
\end{proof}

\begin{proof}[Proof of Proposition \ref{prophomoglinftypolyh}]
  By Theorem \ref{theo:homog} 
 there are functions $u_j\in SBV(\Omega;\sigma_j\Z^N)$ such that $u_j$ converges 
to $u$ strongly in $L^1(\Omega;\R^N)$ and
 \begin{equation*}
  \limsup_{j\to\infty} \int_{J_{u_j}\cap\Omega} \sigma_j 
\psi\left(\frac{[u_j]}{\sigma_j}, \nu_j\right) d\calH^1\le \ELT_0[u,\Omega].
 \end{equation*}
 From  $\frac1c\,|b|\le \psi(b,n)$ we obtain that $u_j$ is bounded in $BV(\Omega;\R^N)$.
%By Lemma \ref{lemmapsilipschit}, the function $\psi$ fulfills the assumption of  Lemma \ref{lemma64cgo2015new}. 
 We apply
  Lemma \ref{lemma64cgo2015new} to $u_j$ with $\eta_j:=1/j$ and obtain a 
polyhedral map 
  $z_j\in SBV(\Omega;\sigma_j\Z^N)$ such that
  \begin{equation*}
   |Dz_j|(\Omega)\le 5|Du_j|(\Omega)\le C_*\,, \hskip5mm
\limsup_{j\to\infty}   \|z_j-u_j\|_{L^1(\Omega)} =0,
  \end{equation*}
for some $C_*>0$ and
\begin{equation*}
  \limsup_{j\to\infty} 
   \int_{J_{z_j}\cap\Omega} \sigma_j \psi\left(\frac{[z_j]}{\sigma_j}, 
\nu_j\right) d\calH^1\le \ELT_0[u,\Omega] .
 \end{equation*}  
Since $z_j$ is polyhedral, there are finitely many segments $[a_h,b_h]$ such 
that
\begin{equation}\label{eqdzjsegm}
 Dz_j=\sum_h (z_h^+-z_h^-) \otimes n_h \calH^1\LL[a_h,b_h],
\end{equation}
where %we orient the segments so that $|z_h^+|\ge |z_h^-|$ and 
for simplicity we do not indicate the 
index $j$ on the traces, the normal, and the points.
 
Let $f_j:=|z_j|$. Then $f_j\in BV(\Omega;[0,\infty))$ with $|Df_j|(\Omega)\le 
|Dz_j|(\Omega)\le C_*$. 
Possibly increasing $C_*$ we can assume 
$2^{C_*}>\|u\|_{L^\infty(\Omega)}$.
By the coarea formula,
\begin{equation*}
 |Df_j|(\Omega)=\int_0^\infty \calH^1(\partial_* \{f_j>t\}) dt
 =\sum_{k\in\Z}\int_{2^k}^{2^{k+1}} \calH^1(\partial_* \{f_j>t\}) dt\leq C_*
\end{equation*}
(we use the short notation $\partial_* \{f_j>t\}$ for $\Omega\cap 
\partial_*\{x\in \Omega: f_j(x)>t\}$, where $\partial_*$ denotes the essential 
boundary).
Fixed $\delta>0$, for any $j$ we choose $k_j\in{\N\cap} (C_*, C_*+C_*/\delta+1)$ 
 such that
\begin{equation*}
\int_{2^{k_j}}^{2^{k_j+1}} \calH^1(\partial_* \{f_j>t\}) dt\le \delta
\end{equation*}
 and then pick $M_j\in (2^{k_j},2^{k_j+1})$ such that
\begin{equation}\label{eqchoicemj}
 2^{k_j} \calH^1(\partial_* \{ f_j> M_j\})\le  \delta.
\end{equation}

We now define $\hat z_j:\Omega\to \sigma_j \Z^N$ by
\begin{equation*}
 \hat z_j(x):=\begin{cases}
 z_j(x), & \text{ if } f_j(x)\le M_j, \\
0 ,& \text{ otherwise.}
          \end{cases}
\end{equation*}
From $z_j\to u$ pointwise almost everywhere and $M_j>2^{C_*}>\|u\|_{L^\infty(\Omega)}$ we 
deduce that $\hat z_j\to u$ pointwise almost everywhere.
It is easy to check that $\hat z_j\in BV(\Omega;\sigma_j\Z^N)$ (indeed $J_{\hat z_j}\subseteq {J_{z_j}}$), and that
\begin{equation*}
 \|\hat z_j\|_{L^\infty(\Omega)} \le M_j\le C_\delta
\end{equation*}
(where $C_\delta:=2^{2+C_*+C_*/\delta}$). Further, by (\ref{eqchoicemj})
\begin{equation}\label{dhatzjdelta}
\begin{split}
 |D\hat z_j|(\Omega) \le &|D z_j|(\Omega) +  M_j \calH^1(\partial_* \{ 
f_j(x)> M_j\})\\
 \le& |D z_j|(\Omega) +  2\delta
 \le C^* +  2\delta.
\end{split}
 \end{equation}
 Therefore $\hat z_j$ converges to $u$ weakly in $BV(\Omega;\R^N)$.

 It remains to estimate the energy. The natural bound
 \begin{equation*}
 \begin{split}
 \ELT_{\sigma_j}[\hat z_j,\Omega]\le &
 \ELT_{\sigma_j}[z_j,\Omega]+\int_{\partial_*\{f_j>M_j\}} \sigma_j 
\psi\left(\frac{[\hat z_j]}{\sigma_j},\nu\right) d\calH^1 
 \end{split}
 \end{equation*}
 does not give the stated result since we do not assume linear control on $\psi$ from above (indeed, in the specific application of interest here $\psi$ is quadratic 
 in the first argument, as is apparent from (\ref{eqpsifromgamma})). 
 Therefore we need another construction, to separate big jumps into 
many small jumps, which corresponds to the fact that the relaxed energy $\psi_\rel$ has linear growth in the first argument.
We shall use that 
from the assumption $\psi(b,n)\le (1+c|n-n'|) \psi(b,n')$ for all $b\in\Z^N$ and $n,n'\in S^1$, we clearly have there exists a constant $\hat c>0$ such that
\begin{equation}\label{eqpsiupbloc}
 \psi(b,n)\le \hat c |b| \hskip5mm\text{ for all } b\in [-1,1]^N\cap\Z^N \text{ and all } n\in S^1.
\end{equation}
%This can be proven either directly from (\ref{eqgammaposdeftheointro}) and (\ref{eqpsifromgamma}) or by the fact that there are finitely many choices of $b$ and that $\psi(b,\cdot)$ is continuous by Lemma \ref{lemmapsilipschit}.

Recalling (\ref{eqdzjsegm}) we see that
\begin{equation}\label{eqdzjsegm2}
 D\hat z_j=\sum_h (\hat z_h^+-\hat z_h^-) \otimes n_h \calH^1\LL[a_h,b_h],
\end{equation}
where the segments are the same as in (\ref{eqdzjsegm}), and
$\hat z_h^+=z_h^+$ if $|z_h^+|\le M_j$ and $0$ otherwise, and correspondingly 
$\hat z_h^-$.
\begin{figure}
	\begin{center}
		\includegraphics[width=4cm]{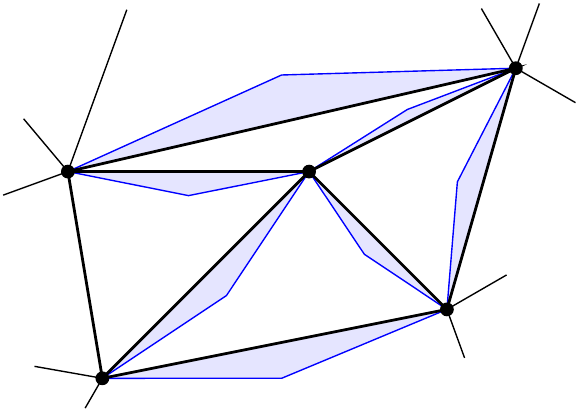}\hskip5mm
		\includegraphics[width=5.5cm]{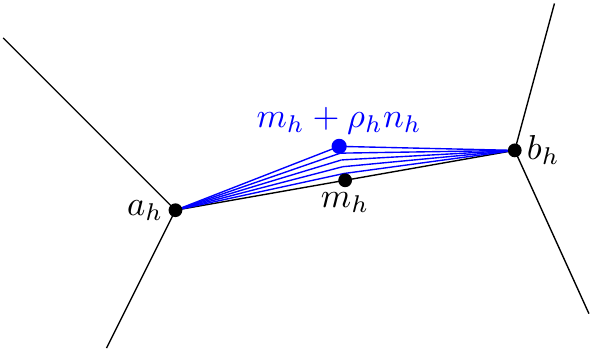}
		\caption{Sketch of the construction in Proposition \ref{prophomoglinftypolyh}. 
			Left panel: construction of the triangles around the segments on which $\hat 
			z_j$ jumps. Right panel: separation of one ``large'' jump into many smaller 
			jumps.}
		\label{figprophomoglinftypolyh}
	\end{center}
\end{figure}
The segments for which both traces are 
unchanged need not be treated, as well as those where both new traces are zero. 
The critical set is
\begin{equation*}
 H:=\{h: 0=\hat z_h^+ \ne z_h^+ \text{ and } \hat z_h^-\ne0\}\cup \{h: 0=\hat 
z_h^- \ne z_h^- \text{ and } \hat z_h^+\ne0\}.
\end{equation*}
As in the  computation in (\ref{dhatzjdelta}) we obtain
\begin{equation*}
 \sum_{h\in H} |D\hat z_j|([a_h,b_h]) 
 \le M_j \calH^1(\partial_*\{f_j>M_j\})
 \le 2\delta.
\end{equation*}
For these segments we need to separate the jump into many smaller jumps.
For any $h\in H$ we let $m_h:=(a_h+b_h)/2$ be the midpoint of the segment {$[a_h,b_h]$}, and 
choose $\rho_h\in (0, |b_h-a_h|)$ 
such that the triangles $T^h:=\conv(a_h,b_h, m_h+\rho_h n_h)$ are, up to the vertices, all disjoint and 
their total area 
is less then $2^{-j}$ (see Figure \ref{figprophomoglinftypolyh}). This is 
possible since there are finitely many segments. %In particular, $\cup_{h\in H} T^h\subseteq\{f_j>M_j\}$.

Choose now one $h\in H$, and assume for definiteness that $\hat z_h^-=0$. Since $|\hat z_h^+|\le  M_j$ there
are ${L_h}\le M_j/\sigma_j$ and 
$\alpha_l\in\sigma_j(\Z^N\cap 
[-1,1]^N)$, with $l=1, \dots, L_h$, such that $\hat z_h^+=\sum_{l=1}^{L_h} \alpha_l$. For $l=0,\dots, L_h$ we define the triangles
\begin{equation*}
 T^h_l := \conv(a_h, b_h, m_h+{\frac {l+1}{L_h+1} } \rho_h n_h)
\end{equation*}
and $v_j:\Omega\to\sigma_j\Z^N$ by setting 
$v_j=\sum_{l'=1}^l \alpha_{l'}$ on each $T^h_l\setminus T^h_{l-1}$, and $v_j=\hat z_j$ 
outside the union of the triangles. 
Then {$[v_j]\in \sigma_j(\Z^N\cap 
[-1,1]^N)$} on each of the closed triangles ${T^h}$,  $v_j=\hat z_j$ on 
the outer boundary of $T^h$, and 
$|Dv_j|( T^h)\le c |a_h-b_h| |\hat z_h^+-\hat z_h^-|\le c |D\hat 
z_j|([a_h,b_h])$. Therefore, recalling (\ref{eqpsiupbloc}),
 \begin{equation*}
 \begin{split}
 \ELT_{\sigma_j}[v_j,\Omega]\le &
 \ELT_{\sigma_j}[z_j,\Omega]+\sum_{h\in H} \int_{\cup_l \partial T_l^h} \sigma_j 
\psi\left(\frac{[v_j]}{\sigma_j},\nu\right) d\calH^1 \\
\le &
 \ELT_{\sigma_j}[ z_j,\Omega]
 +\sum_{h\in H} \int_{\cup_l \partial T_l^h} \hat c |[v_j]| d\calH^1\\
= &
 \ELT_{\sigma_j}[ z_j,\Omega]+c\sum_{h\in H} |Dv_j|( T^h)\\
\le &
 \ELT_{\sigma_j}[z_j,\Omega]+c\sum_{h\in H} |D\hat z_j|([a_h,b_h])
\le  \ELT_{\sigma_j}[ z_j,\Omega]+c\delta.
 \end{split}
 \end{equation*}
 The same computation also shows that $v_j$ is bounded in $BV$.
Since $|\{v_j\ne \hat z_j\}|\le 2^{-j}$ we obtain $v_j\to u$ pointwise almost 
everywhere.
\end{proof}

\section{Upper bound}
\label{sectupper}
The upper bound is obtained by an explicit but involved construction, that combines the {several} rescaling steps. 
Due to the incompatibility of the two constraints of being $BV$ with values in a scaled copy of $\Z^N$ and being in $H^{1/2}$ we cannot use density and separate the two scales. Instead we need to use a joint construction, which depends on both scales.

We start from the sequence constructed in Section \ref{secedenstiy}, which takes 
values in $SBV(\Omega;\sigma\Z^N)$ for a scale $\sigma$ which converges 
slowly to 0 with respect to $\eps$.
The key step is the construction in the following lemma.

\begin{lemma}\label{propupperbound}
	Let $\Omega\subset\subset\Omega'$ be two bounded Lipschitz domains.
	Let $\sigma>0$, $v\in SBV(\Omega';\sigma \Z^N)
	{\cap L^\infty(\Omega';\sigma \Z^N)}$ polyhedral,
	$\alpha\in(0,{\frac12})$ and $\rho>0$ with $ 3\rho^\alpha<\dist(\Omega,\partial\Omega')$.
	
	Then for any $\eps>0$ there are $w_\eps\in L^2(\Omega;\R^N)$ and $\zeta\in \overline B_1$ such that
	\begin{equation*}
	\|\frac{w_\eps}{\ln(1/\eps)}\|_{L^\infty(\Omega)}\le 
	\|v\|_{L^\infty(\Omega)},
	\end{equation*}
	\begin{equation*}
	\|\frac{w_\eps}{\ln(1/\eps)}- v\|_{L^1(\Omega)}\le \rho^\alpha |Dv|(\Omega'),
	\end{equation*}
	and
	\begin{align*}
	\limsup_{\eps\to0} \frac{ E_\eps[w_\eps,\Omega]}{(\ln(1/\eps))^2} &\le \int_{J_v\cap \Omega'} \sigma \psi\left(\frac{[v]}{\sigma},n\right) d\calH^1 
	+ f(\rho) (|Dv|(\Omega'))^{4/3} {\|v\|_{L^\infty(\Omega')}^{2/3}}\\ 
	+ \int_{\Omega} \int_{\Omega\setminus B_\rho(x)}& \Gamma(x-y)(v^\zeta_\infty(x)-v^\zeta_\infty(y))\cdot (v^\zeta_\infty(x)-v^\zeta_\infty(x)) dydx 
	\end{align*}
	where % $\Omega''$ is a set with $\Omega\subset\subset\Omega''\subset\subset\Omega'$, 
	$v^\zeta_\infty: \Omega\to\R^N$ is defined by
	\begin{equation}\label{inftyaverage}	
	v^\zeta_\infty (x):= \int_0^1 v(x+\rho^\alpha \zeta t) dt,
	\end{equation}
	and 
	$f(\rho)\to0$ as $\rho\to0$. The function $f$ depends on $\Gamma$ and $\alpha$, but not on $v$.
\end{lemma}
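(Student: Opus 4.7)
The strategy is to build $w_\eps$ by superposing $n_\eps:=\lfloor\sigma\ln(1/\eps)\rfloor$ shifted copies of $v/\sigma$, each regularized near its jumps by the standard one-dimensional Peierls--Nabarro profile on the scale $\eps$. The shifts lie along a single direction $\zeta\in\overline B_1$, which I choose once and for all so that $\zeta\cdot n_h\ne 0$ for every normal $n_h$ in the (finite) collection of jump segments $S_h=[a_h,b_h]$ of the polyhedral function $v$; such $\zeta$ exists by a generic choice. Setting $t_k:=k/n_\eps$, define
\[
w_\eps(x):=\sum_{k=0}^{n_\eps-1}\tilde v_k^\eps(x),
\]
where $\tilde v_k^\eps$ is obtained from the (still integer-valued) shifted function $v(\cdot+t_k\rho^\alpha\zeta)/\sigma$ by replacing each elementary jump across its segment by the 1D Peierls--Nabarro profile of transverse width $\eps$. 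For $\eps$ small, consecutive shifts differ in the $n_h$-direction by $(\rho^\alpha/n_\eps)|\zeta\cdot n_h|\gg\eps$, so the $\eps$-wide smoothing strips of different $k$ (and of different segments) are pairwise disjoint; in particular $w_\eps$ is integer-valued outside these strips.

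\textbf{$L^\infty$, $L^1$ and interfacial bounds.} Since $\|\tilde v_k^\eps\|_{L^\infty}\le\|v\|_{L^\infty}/\sigma$ and $n_\eps\le\sigma\ln(1/\eps)$, one gets $\|w_\eps/\ln(1/\eps)\|_{L^\infty(\Omega)}\le\|v\|_{L^\infty(\Omega)}$. Viewing $(n_\eps)^{-1}\sum_k v(\cdot+t_k\rho^\alpha\zeta)/\sigma$ as a Riemann approximation of $v_\infty^\zeta/\sigma$, translation continuity in $BV$ gives
\[
\|v_\infty^\zeta-v\|_{L^1(\Omega)}\le\int_0^1\|v(\cdot+\rho^\alpha\zeta t)-v\|_{L^1(\Omega)}\,dt\le \rho^\alpha|Dv|(\Omega'),
\]
and together with an $o(1)$ Riemann-sum error this yields the stated $L^1$ approximation. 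The interfacial term $\eps^{-1}\int W(w_\eps)\,dx$ is controlled by noticing that on every smoothing strip only one $\tilde v_k^\eps$ is non-integer, so by $W(\xi)\le c\,\dist^2(\xi,\Z^N)$ the contribution is $O(n_\eps\sum_h L_h)=O(\ln(1/\eps))=o((\ln(1/\eps))^2)$.

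\textbf{Elastic energy.} Split $\iint_{\Omega\times\Omega}\Gamma(x-y)|w_\eps(x)-w_\eps(y)|^2\,dx\,dy=I^{\mathrm{loc}}_\eps+I^{\mathrm{far}}_\eps$ according to $|x-y|<\rho$ versus $|x-y|\ge\rho$. For $I^{\mathrm{far}}_\eps$, the $\eps$-scale PN smoothing is invisible at distance $\rho$ and $w_\eps/\ln(1/\eps)$ is $L^2$-close to $v_\infty^\zeta$, so
\[
\frac{I^{\mathrm{far}}_\eps}{(\ln(1/\eps))^2}\to\int_{\Omega}\int_{\Omega\setminus B_\rho(x)}\Gamma(x-y)|v_\infty^\zeta(x)-v_\infty^\zeta(y)|^2\,dy\,dx.
\]
For $I^{\mathrm{loc}}_\eps$ within a single segment $S_h$, the $n_\eps$ PN copies are equispaced at distance $d_\eps:=\rho^\alpha|\zeta\cdot n_h|/n_\eps\gg\eps$, so each acts as an essentially isolated Peierls--Nabarro dislocation with outer cutoff $d_\eps$. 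The single-dislocation asymptotics of \cite{ContiGarroniMueller2011} (Sec.~5) gives self-energy $\psi(b_h,n_h)L_h\ln(d_\eps/\eps)(1+o(1))$ per copy; since $\ln(d_\eps/\eps)/\ln(1/\eps)\to1$, summation over $k$ and $h$ gives
\[
\limsup_{\eps\to0}\frac{I^{\mathrm{loc}}_\eps}{(\ln(1/\eps))^2}\le\int_{J_v\cap\Omega'}\sigma\psi\!\left(\frac{[v]}{\sigma},\nu\right)d\calH^1+\mathrm{err}(\rho,v),
\]
where $\mathrm{err}(\rho,v)$ collects subleading cross-interactions.

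\textbf{Error term and main obstacle.} The delicate technical point is estimating $\mathrm{err}(\rho,v)$ and the $I^{\mathrm{far}}$ mismatch, because pairs $(x,y)$ straddling the cutoff $|x-y|=\rho$ still sit inside the transition band of width $\rho^\alpha>\rho$. These terms are controlled by Cauchy--Schwarz against the ``smooth" and ``singular" factors, combined with the elementary interpolation
\[
\|v-v_\infty^\zeta\|_{L^2}^2\le\|v-v_\infty^\zeta\|_{L^1}\|v-v_\infty^\zeta\|_{L^\infty}\le 2\rho^\alpha|Dv|(\Omega')\,\|v\|_{L^\infty(\Omega')},
\]
and a H\"older-type redistribution that rearranges the powers of $|Dv|(\Omega')$ and $\|v\|_{L^\infty(\Omega')}$ into the claimed exponents $4/3$ and $2/3$, while the $\rho$-dependence collects into a prefactor $f(\rho)\to0$. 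The constraint $\alpha<1/2$ enters precisely here: it ensures that after interpolating the leftover powers of $\rho$ remain positive, so that $f(\rho)\to0$ as $\rho\to0$. The main technical difficulty is thus not the leading-order line-tension scaling (which is a standard consequence of the single-dislocation PN analysis) but the simultaneous control of the many cross-terms between distinct PN copies along the same segment, between distinct segments, and between the local and far regimes.
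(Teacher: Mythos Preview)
Your overall architecture matches the paper's: build $w_\eps$ as $\ln(1/\eps)$ times a sum of $L_\eps\sim\sigma\ln(1/\eps)$ translates of $v/\sigma$, regularize at scale $\eps$, split the elastic energy into short-range ($|x-y|<\rho$) and long-range parts, and identify the short-range diagonal contribution with the line-tension term via the single-interface asymptotics. The paper uses a mollifier $\varphi_\eps$ rather than a Peierls--Nabarro profile, but that difference is cosmetic.

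The genuine gap is in your treatment of $\zeta$ and of the off-diagonal short-range interactions. You fix $\zeta$ once and for all by a generic choice with $\zeta\cdot n_h\ne 0$. But ``nonzero'' gives no quantitative control: if $v$ has a segment whose normal is nearly orthogonal to $\zeta$, the spacing $d_\eps=\rho^\alpha|\zeta\cdot n_h|/n_\eps$ between successive copies is tiny, and the cross-interaction between distinct copies along that segment blows up in a way that depends on $\min_h|\zeta\cdot n_h|$. This is a feature of the geometry of $J_v$ that is \emph{not} bounded by $|Dv|(\Omega')$ and $\|v\|_{L^\infty(\Omega')}$, so you cannot get an error of the form $f(\rho)\,|Dv|^{4/3}\|v\|_{L^\infty}^{2/3}$ with $f$ independent of $v$, as the statement requires. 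Your proposed interpolation $\|v-v_\infty^\zeta\|_{L^2}^2\le 2\rho^\alpha|Dv|\,\|v\|_{L^\infty}$ produces the wrong exponents ($1$ and $1$, not $4/3$ and $2/3$) and does not address the actual cross terms $B_{SR}^\rho(T_j^\zeta v,T_{j'}^\zeta v)$ for $j\ne j'$.

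The paper resolves this by \emph{not} fixing $\zeta$ in advance. It averages $B_{SR}^{\mathrm{cross}}(\zeta)$ over $\zeta\in B_1$ against the mollifier $\varphi_1$; linearity turns the sum of translates into a convolution $\Phi*v$, where $\Phi$ is an explicit sum of rescaled mollifiers whose $L^{q}$ norm is computable in terms of $\rho^\alpha$ and $L_\eps$ only. A H\"older estimate with $p=3$, $q=3/2$ then gives the averaged bound
\[
A_\eps\le c\,\rho^{(1-2\alpha)/3}\,|Dv|(\Omega')^{4/3}\|v\|_{L^\infty(\Omega')}^{2/3},
\]
which is exactly where the exponents $4/3$, $2/3$ and the requirement $\alpha<\tfrac12$ come from. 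One then \emph{chooses} $\zeta_\eps$ so that $B_{SR}^{\mathrm{cross}}(\zeta_\eps)\le A_\eps$; the $\zeta$ in the statement is a subsequential limit of the $\zeta_{\eps_i}$. Without this averaging-then-selecting step, the lemma as stated (with $f$ independent of $v$) does not follow.
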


Proof will be given at the end of this section. The main point is to replicate each interface 
$\sigma\ln\frac1\eps$ times, and then mollify on a scale $\eps$. This modifies the function only on a small set, of area proportional to $\sigma \eps \ln \frac1\eps$, which ensures that the nonlinear term $\eps^{-1}\|\dist(u_\eps,\Z^N)\|_2^2$ vanishes in the limit, for an appropriate scaling of $\sigma$. Since the separation between the interfaces is much larger than the scale of the mollification, their interaction is small. For each interface, the energy is estimated by an explicit computation in Lemma \ref{lemmamollificationupper}. 

Care must be taken in undoing the several relaxation steps, both at the line-tension and at the continuous scale, and in several truncation steps to permit to estimate the various error terms.
For this construction in the upper bound we fix a mollifier $\varphi_1\in C^\infty_c(B_1)$ and set $\varphi_\lambda:=\lambda^{-2}\varphi_1(\lambda x)$.

\begin{proposition}\label{propgammalimsup}
 Let $\Omega\subset\R^2$ be a bounded connected Lipschitz domain,
and let $E_\eps[\cdot, \Omega]$ be defined as in (\ref{uno}), with 
$W$ and $\Gamma$ which satisfy 
(\ref{eqWdist})--(\ref{eqgammaposdeftheointro}).

Let $u_0\in BV(\Omega;\R^N)\cap H^{1/2}(\Omega;\R^N)$. For any  $\eps>0$ there is $u_\eps\in L^2(\Omega;\R^N)$ with
 $u_\eps/(\ln(1/\eps))\to u$ in $L^2$ and
 \begin{equation}\label{eqpropgammalimsup}
     \limsup_{\eps\to0} \frac{E_\eps[u_\eps,\Omega]}{(\ln(1/\eps))^2}\le 
F_0[u,\Omega]\,.
 \end{equation}
\end{proposition}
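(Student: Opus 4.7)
The plan is to construct a recovery sequence for $u$ by combining the polyhedral, $L^\infty$-bounded approximation of Proposition \ref{prophomoglinftypolyh} with the explicit construction in Lemma \ref{propupperbound}, and then to diagonalize over the several scales that appear.

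\textbf{Step 1 (reduction to bounded targets).} The first move is to reduce to $u\in L^\infty(\Omega;\R^N)\cap BV(\Omega;\R^N)\cap H^{1/2}(\Omega;\R^N)$. Componentwise truncation at height $M$ is $1$-Lipschitz, hence contracts both the $H^{1/2}$-seminorm and the total variation; combined with the convexity and $1$-homogeneity of $g$ and dominated convergence for the nonlocal term, $F_0[u^M,\Omega]\to F_0[u,\Omega]$, so a standard diagonal argument reduces the problem to bounded $u$.

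\textbf{Step 2 (polyhedral, $L^\infty$-bounded approximation).} I would extend $u$ to a Lipschitz $\Omega'\supset\supset\Omega$ with $|Du|(\partial\Omega)=0$ (by reflection, as in the proof of Lemma \ref{lemma64cgo2015new}), preserving $L^\infty$ and $H^{1/2}$; by shrinking $\Omega'$ towards $\Omega$ one has $F_\self[u,\Omega']\le F_\self[u,\Omega]+\delta$ for any preset $\delta>0$. By Lemma \ref{lemmapsilipschit} the unrelaxed energy $\psi$ of (\ref{eqpsifromgamma}) fulfils the hypothesis of Proposition \ref{prophomoglinftypolyh}, which then provides, for any $\sigma_j\to 0$, polyhedral $v_j\in SBV(\Omega';\sigma_j\Z^N)$ with $v_j\to u$ in $L^1(\Omega')$, $\sup_j(\|v_j\|_{L^\infty}+|Dv_j|(\Omega'))<\infty$, and
\begin{equation*}
\limsup_{j\to\infty}\int_{J_{v_j}\cap\Omega'}\sigma_j\,\psi\bigl([v_j]/\sigma_j,\nu_j\bigr)\,d\calH^1\le F_\self[u,\Omega']+\delta.
\end{equation*}

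\textbf{Step 3 (apply Lemma \ref{propupperbound}).} I fix $\alpha\in(0,1/2)$ and, momentarily, $\rho>0$. Applying Lemma \ref{propupperbound} to each $v_j$ on $\Omega'\supset\supset\Omega$ produces $w_\eps^j\in L^2(\Omega;\R^N)$ and $\zeta_j\in\overline B_1$ such that $\|w_\eps^j/\ln(1/\eps)\|_{L^\infty(\Omega)}\le\|v_j\|_{L^\infty}$, $\|w_\eps^j/\ln(1/\eps)-v_j\|_{L^1(\Omega)}\le\rho^\alpha|Dv_j|(\Omega')$, and
\begin{equation*}
\limsup_{\eps\to 0}\frac{E_\eps[w_\eps^j,\Omega]}{(\ln(1/\eps))^2}\le \mathrm{LT}_j + f(\rho)\bigl(|Dv_j|(\Omega')\bigr)^{4/3}\|v_j\|_\infty^{2/3} + \mathrm{NL}_j(\rho),
\end{equation*}
where $\mathrm{LT}_j$ is the line-tension integral from Step 2 and $\mathrm{NL}_j(\rho):=\int_\Omega\int_{\Omega\setminus B_\rho(x)}\Gamma(x-y)|(v_j)^{\zeta_j}_\infty(x)-(v_j)^{\zeta_j}_\infty(y)|^2\,dy\,dx$.

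\textbf{Step 4 (first $j\to\infty$, then $\rho\to 0$).} With $\rho$ fixed, I pass to the limit $j\to\infty$. Step 2 gives $\limsup_j\mathrm{LT}_j\le F_\self[u,\Omega]+2\delta$; the error $f(\rho)(|Dv_j|(\Omega'))^{4/3}\|v_j\|_\infty^{2/3}$ is bounded uniformly in $j$ by $Cf(\rho)$. For $\mathrm{NL}_j(\rho)$, I extract a subsequence so that $\zeta_j\to\zeta_*\in\overline B_1$. Since $v_j\to u$ in $L^1(\Omega')$ with a uniform $L^\infty$ bound, in fact $v_j\to u$ in $L^2(\Omega')$, and since the one-dimensional average $v\mapsto v^\zeta_\infty$ defined in (\ref{inftyaverage}) is continuous in $L^2$ at fixed scale $\rho^\alpha$, $(v_j)^{\zeta_j}_\infty\to u^{\zeta_*}_\infty$ in $L^2$; the kernel being bounded on $\{|x-y|>\rho\}$, this yields $\mathrm{NL}_j(\rho)\to\mathrm{NL}_\infty(\rho):=\int_\Omega\int_{\Omega\setminus B_\rho(x)}\Gamma|u^{\zeta_*}_\infty(x)-u^{\zeta_*}_\infty(y)|^2$. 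Finally, as $\rho\to 0$, $u^{\zeta_*}_\infty\to u$ in $H^{1/2}$ uniformly in $\zeta_*\in\overline B_1$ (continuity of one-dimensional averages on $H^{1/2}$). Expanding $|u^{\zeta_*}_\infty(x)-u^{\zeta_*}_\infty(y)|^2-|u(x)-u(y)|^2$ and applying Cauchy--Schwarz, combined with the norm-equivalence $[\cdot]_{H^{1/2}(\Omega)}^2\sim\int\!\int\Gamma|\cdot|^2$ coming from (\ref{eqgammaposdeftheointro}), then gives $\mathrm{NL}_\infty(\rho)\to\int_{\Omega\times\Omega}\Gamma(x-y)|u(x)-u(y)|^2\,dy\,dx$, while $f(\rho)\to 0$.

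\textbf{Step 5 (diagonal).} A standard diagonal extraction over $(M,\delta,\rho,j,\eps)$ produces a single recovery sequence $u_\eps$ with $\limsup_\eps E_\eps[u_\eps,\Omega]/(\ln(1/\eps))^2\le F_0[u,\Omega]$. The required $L^2$-convergence $u_\eps/\ln(1/\eps)\to u$ follows from $\|w_\eps^j/\ln(1/\eps)-v_j\|_{L^2}^2\le 2\|v_j\|_\infty\rho^\alpha|Dv_j|(\Omega')$ together with $v_j\to u$ in $L^2$, once $\rho$ is sent to $0$ only \emph{after} $j$.

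\textbf{Main obstacle.} The crux is Step 4: Lemma \ref{propupperbound} produces the long-range contribution in terms of the \emph{mollified} $(v_j)^\zeta_\infty$, not of $v_j$ itself. Since $v_j\in BV$ has jumps, $v_j\notin H^{1/2}$ and a direct $H^{1/2}$-comparison between $(v_j)^\zeta_\infty$ and $u$ is impossible; a naive estimate bounding $\mathrm{NL}_j(\rho)$ by $C\|u-(v_j)^\zeta_\infty\|_{L^2}^2/\rho$ diverges for any $\alpha<1$. Separating the scales — passing to the limit $j\to\infty$ \emph{before} $\rho\to 0$ — circumvents this obstruction: the polyhedral scale collapses at fixed $\rho$, and the remaining passage $\rho\to 0$ uses only the $H^{1/2}$-continuity of one-dimensional averaging applied to the regular limit $u$. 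Making the four diagonals compatible with both the $L^2$-convergence of the recovery sequence and the energy upper bound is a matter of ordered bookkeeping, not of new ideas.
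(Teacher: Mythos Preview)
Your proof follows the paper's strategy exactly: reduce, extend to $\Omega'\supset\supset\Omega$, apply Proposition~\ref{prophomoglinftypolyh} and then Lemma~\ref{propupperbound}, and pass to the limit $j\to\infty$ \emph{before} $\rho\to0$ so that the long-range term sees only the regular limit $u$ rather than the polyhedral $v_j$. The one substantive difference is the reduction step: the paper mollifies $u$ to $C^\infty(\overline{\Omega'})$ and uses convexity of $F_0$ under averaging of translates to get $F_0[u_\delta,\Omega_\delta]\le F_0[\hat u,\Omega_{2\delta}]$, whereas you truncate componentwise to land in $L^\infty$.

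Your Step~1 is correct but not for the reason you give. ``Convexity and $1$-homogeneity of $g$'' only yield lower semicontinuity, and for anisotropic $g$ componentwise truncation can \emph{increase} $g(\nabla u)$ pointwise (zeroing one row of $A$ need not decrease a convex $1$-homogeneous $g(A)$). What does work is that $T_M$ is $1$-Lipschitz, so $|Du^M|(\Omega)\le|Du|(\Omega)$, while $u^M\to u$ in $L^1$ gives $Du^M\weakstarto Du$; together these force $|Du^M|(\Omega)\to|Du|(\Omega)$, i.e.\ strict convergence in $BV$, and then Reshetnyak's continuity theorem yields $F_\self[u^M,\Omega]\to F_\self[u,\Omega]$. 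With that fix your route is sound; your direct argument in Step~4 that $u^{\zeta_*}_\infty\to u$ in $H^{1/2}$ via strong continuity of translations is valid for any $u\in H^{1/2}$ and is in fact slightly more economical than the paper's appeal to smoothness.
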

{We recall that $F_0$ was defined in (\ref{eqdefF}).}
\begin{proof}%[Proof of Proposition \ref{propgammalimsup}]
 We start by reducing to the case that $u$ is smooth and that it is defined on a domain $\Omega'$ larger  than $\Omega$.

 To see this, observe that since $\Omega$ is Lipschitz there are an open set $\omega$ with $\partial\Omega\subset\omega$ and a 
  bilipschitz map $\Phi:\omega\to\omega$ such that $\Phi(x)=x$ for $x\in\partial\Omega$ and
 $\Phi(\Omega\cap \omega)=\omega\setminus\overline\Omega$. We define $\hat u:\Omega\cup\omega\to\R^N$ by reflection
 \begin{equation*}
  \hat u := \begin{cases}
            u ,& \text{ in } \Omega,\\
            u \circ \Phi, &\text{ in } \omega\setminus\Omega.
           \end{cases}
 \end{equation*}
 Then $\hat u\in BV(\Omega\cup\omega;\R^N)\cap H^{1/2}(\Omega\cup\omega;\R^N)$, with $|D\hat 
u|(\partial\Omega)=0$.
We fix $\delta>0$ and let $\Omega_\delta:=\{x:\dist(x,\Omega)<\delta\}$, so that
 \begin{equation*}
  \limsup_{\delta\to0} |D\hat u|(\Omega_\delta) =|D\hat u|(\Omega)  =|Du|(\Omega) 
 \end{equation*}
and
\begin{equation*}
  \limsup_{\delta\to0}\, [\hat u]_{H^{1/2}(\Omega_\delta)} =[\hat 
u]_{H^{1/2}(\Omega)}=[u]_{H^{1/2}(\Omega)}.
 \end{equation*}
In particular, 
\begin{equation*}
  \limsup_{\delta\to0} F_0[\hat u,\Omega_\delta] =F_0[\hat u,\Omega] 
=F_0[u,\Omega].
 \end{equation*}
Now for sufficiently small $\delta$ define
$u_\delta:=\varphi_\delta\ast \hat u\in C^\infty(\overline{\Omega_\delta};\R^N)$, with $\varphi_\delta$ the  mollification kernel. Since $F_0$ is convex, we have
\begin{equation*}
  \limsup_{\delta\to0} F_0[u_\delta,\Omega_\delta] \le 
  \limsup_{\delta\to0} F_0[\hat u,\Omega_{2\delta}] =F_0[ u,\Omega]
 \end{equation*} 
 and $u_\delta\to u$ in $L^2(\Omega;\R^N)$.
 
 Therefore in the rest of the proof we assume that $u\in C^\infty(\overline{\Omega'};\R^N)$ is given, 
with $\Omega\subset\subset\Omega'$ and $\Omega'$ Lipschitz. We shall show that for any
 $\eta>0$ and any $\eps>0$ there is $w_\eps\in L^2(\Omega;\R^N)$ such that $w_\eps/\ln(1/\eps)\to u$ in 
$L^2(\Omega;\R^N)$ and
 \begin{equation}\label{eqendofproof}
  \limsup_{\eps\to0} \frac{1}{(\ln (1/\eps))^2} E_\eps[w_\eps,\Omega] \le F_0[u,\Omega'] +\eta.
 \end{equation}
 Since $\eta$ is arbitrary, taking a diagonal subsequence will conclude the 
proof of (\ref{eqpropgammalimsup}). 

It remains to prove (\ref{eqendofproof}).  Let $\sigma_j\in (0,1)$ be such that 
$\sigma_j\downarrow 0$ as $j\to0$.  
 By Proposition \ref{prophomoglinftypolyh} (which can be applied thanks to Lemma \ref{lemmapsilipschit})
 there are polyhedral functions  $v_j\in SBV(\Omega';\sigma_j\Z^N)$ such that 
$v_j$ converges to $u$ strongly in $L^1(\Omega';\R^N)$,
 \begin{equation*}
  \limsup_{j\to\infty} \int_{J_{v_j}\cap\Omega'} \sigma_j \psi\left(\frac{[v_j]}{\sigma_j}, n_j\right) d\calH^1\le F_\self[u,\Omega'] +\eta,
 \end{equation*}
 and $C_\eta:=\sup\|v_j\|_{L^\infty(\Omega)}+|Dv_j|(\Omega')<\infty$. We recall that, {since $u$ is smooth,} in particular $u\in L^\infty(\Omega')$
{and that $F_\self[u,\Omega']=\ELT_0[u,\Omega']$ (see the statements of Theorem \ref{theorem1} and Theorem \ref{theo:homog}).}
 
Since $v_j$ is polyhedral, by Lemma \ref{propupperbound} for $\alpha:=1/3$, and  $\eps$ and $\rho$ small enough, there are 
functions $w_\eps^{j,\rho}\in L^2(\Omega;\R^N)$ and vectors $\zeta_j\in \overline{B}_1$ such that
\begin{equation}\label{eqconstruct1-w}
\|\frac{w_\eps^{j,\rho}}{\ln(1/\eps)}\|_{L^\infty(\Omega)}\le 
\|v_j\|_{L^\infty(\Omega)}\le C_\eta,
\end{equation}
	\begin{equation*}
	\|\frac{w_\eps}{\ln(1/\eps)}- v\|_{L^1(\Omega)}\le \rho^\alpha |Dv|(\Omega'),
	\end{equation*}
and
\begin{align*}
 \limsup_{\eps\to0} &\frac{1}{(\ln(1/\eps))^2} E_\eps[w_\eps^{j,\rho},\Omega] \\
 \le& \int_{J_{v_j}\cap\Omega'} \sigma_j \psi\left(\frac{[v_j]}{\sigma_j},n_j\right) d\calH^1 
+ f(\rho) (|Dv_j|(\Omega'))^{4/3} \|v_j\|_{L^\infty(\Omega')}^{2/3}\\ 
 &+ \int_{\Omega} \int_{\Omega\setminus B_\rho(x)} \Gamma(x-y)(v^{\zeta_j}_{j,\infty}(x)-v^{\zeta_j}_{j,\infty}(y))\cdot (v^{\zeta_j}_{j,\infty}(x)-v^{\zeta_j}_{j,\infty}(y)) dydx ,
\end{align*}
where $v^{\zeta_j}_{j,\infty}=(v_j)_\infty^{\zeta_j}$ is an average of $v_j$ in direction $\zeta_j$ at a scale set by $\rho$ as defined in \eqref{inftyaverage}.
%, i.e., for $\zeta\in \overline{B_1}$, $\alpha\in(0,1)$, $z\in L^1(\Omega';\R^N)$ we set
%\begin{equation}\label{eqmediaj}
% z_\infty^\zeta:=\int_0^1 z(x+\rho^\alpha\zeta t) dt.
%\end{equation}
Further from \eqref{eqconstruct1-w}
\begin{align*}
 \limsup_{\eps\to0}  \|\frac{w_\eps^{j,\rho}}{\ln(1/\eps)}-u\|_{L^1(\Omega)}\le&
 \|v_j-u\|_{L^1(\Omega)}+
\limsup_{\eps\to0}  \|\frac{w_\eps^{j,\rho}}{\ln(1/\eps)}-v_j\|_{L^1(\Omega)}\\
\le&  \|v_j-u\|_{L^1(\Omega)}+
 \rho^\alpha |Dv_j|(\Omega') \\
\le & \|v_j-u\|_{L^1(\Omega)}+ C_\eta \rho^\alpha.
\end{align*}
We now take $j\to\infty$, and extract a subsequence such that $\zeta_{j_k}\to\zeta_\infty$
{and $\displaystyle\lim_{k\to\infty} \limsup_{\eps\to0} \frac{1}{(\ln(1/\eps))^2} E_\eps[w_\eps^{j_k,\rho},\Omega] =
\limsup_{j\to\infty} \limsup_{\eps\to0} \frac{1}{(\ln(1/\eps))^2} E_\eps[w_\eps^{j,\rho},\Omega]$.}
By dominated convergence,
$(v_j)^{\zeta_j}_\infty\to u^{\zeta_\infty}_\infty$ pointwise and hence in  $L^2(\Omega;\R^N)$ 
 and 
\begin{align*}
\limsup_{j\to\infty} \limsup_{\eps\to0} \frac{1}{(\ln(1/\eps))^2} E_\eps[w_\eps^{j,\rho},\Omega] &\le F_\self[u,\Omega']+\eta
+ C_\eta^ 2 f(\rho) \\ 
 + \int_{\Omega} \int_{\Omega\setminus B_\rho(x)} \Gamma(x-y)&(u^{\zeta_\infty}_{\infty}(x)-u^{\zeta_\infty}_{\infty}(y))\cdot (u^{\zeta_\infty}_{\infty}(x)-u^{\zeta_\infty}_{\infty}(y)) dydx. 
\end{align*}
As $\rho\to0$ we have, since $u\in C^\infty(\Omega';\R^N)$, that
 $u^{\zeta_\infty}_{\infty}\to u$ in $L^2(\Omega;\R^N)$ and in $H^{1/2}(\Omega;\R^N)$ and therefore
\begin{align*}
\limsup_{\rho\to0}\limsup_{j\to\infty} \limsup_{\eps\to0} \frac{1}{(\ln(1/\eps))^2} E_\eps[w_\eps^{j,\rho},\Omega] &\le F_\self[u,\Omega']+\eta
\\ 
 + \int_{\Omega'\times\Omega'} \Gamma(x-y)&(u(x)-u(y))\cdot(u(x)-u(y)) dydx \\
 &= F_0[u,\Omega']+\eta
\end{align*}
with 
\begin{equation*}
 \limsup_{\rho\to0}\limsup_{j\to\infty} \limsup_{\eps\to0}  \|\frac{w_\eps^{j,\rho}}{\ln(1/\eps)}-u\|_{L^2(\Omega)}=0.
\end{equation*}

Taking a diagonal sequence concludes the proof of (\ref{eqendofproof}).
 \end{proof}

It remains to show the detailed construction of the functions $w_\eps^{j,\rho}$ given in Lemma \ref{propupperbound}. First we recall that the unrelaxed line tension energy for polyhedral interfaces can be obtained with a direct computation starting from the nonlocal energy.  
 \begin{lemma}\label{lemmamollificationupper}
Let  $\Omega\subset\subset\Omega'$ be two bounded open sets, $v\in SBV(\Omega'; \Z^N)$ polyhedral,
%with 
% \begin{equation*}
%  Du=\sum_{h=1}^H b_h\otimes n_h \calH^1\LL S_h
% \end{equation*}
%where $S_h\subset\R^2$ is a segment with normal $n_h\in S^1$ and length $\ell_h$, 
and assume that $\Gamma$ obeys
(\ref{eq:derGammaintro}) and (\ref{eqgammaposdeftheointro}). Let  $\varphi_\eps\in C^\infty_c(B_\eps)$  be a mollifier and 
$\eps<\dist(\Omega,\partial\Omega')$. Then $w_\eps:=\varphi_\eps\ast v$ obeys
\begin{align*}
 &\limsup_{\eps\to0} \frac{1}{\ln(1/\eps)}\int_{\Omega\times \Omega} \Gamma(x-y) (w_\eps(x)-w_\eps(y))\cdot(w_\eps(x)-w_\eps(y)) dxdy\\
 &
 \le \int_{J_v\cap\Omega'} \psi([v],n) d\calH^1,
\end{align*}
where $\psi$ is as in (\ref{eqpsifromgamma}).
\end{lemma}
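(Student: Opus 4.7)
My plan is to partition the double integral according to the finitely many straight jump segments of $v$ and show that each segment contributes independently the claimed $\ln(1/\eps)$-leading order, while all cross-interactions and bulk/vertex regions remain bounded uniformly in $\eps$. The key input is an explicit asymptotic evaluation for a single infinite jump, which transfers locally to each segment by the finite range of the mollification.

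For the model computation I would take $V_0(x) := b\chi_{\{x\cdot n>0\}}$, so that $W_\eps := \varphi_\eps \ast V_0 = b\,\tilde h_\eps(x\cdot n)$ depends only on $x\cdot n$, with $\tilde h_\eps(\tau) = \tilde h_1(\tau/\eps)$ smooth and monotone from $0$ to $1$ across $[-\eps,\eps]$. On a rectangle $R_L := \{an^\perp + tn : 0<a<L,\ |t|<W\}$ aligned with the jump line, introducing the coordinates $(a,t)$ and $(b',s)$ for $x$ and $y$ and changing variables to $u := a-b'$ and $v := t-s$, the $(a,b')$-integration produces the weight $L-|u|$ and the $s$-integration yields
\begin{equation*}
g_\eps(v) := \int_{-W}^{W-v} |\tilde h_\eps(s+v) - \tilde h_\eps(s)|^2\,ds,
\end{equation*}
which satisfies $g_\eps(v) = |v| + O(\eps)$ for $2\eps \le |v| \le W-\eps$ and is bounded by $\min(Cv^2/\eps, |v|)$ otherwise. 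Passing to polar coordinates $(u,v) = (r\cos\theta, r\sin\theta)$, the radial integral splits at $r = \eps/|\sin\theta|$ to produce $|\sin\theta|\ln(1/\eps) + O(|\sin\theta|)$, and formula (\ref{eqpsibns1}) of Lemma \ref{lemmapsilipschit} identifies the angular integral as exactly $\psi(b,n)$, giving a leading contribution $L\psi(b,n)\ln(1/\eps) + O(L)$.

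For the polyhedral $v$ with $Dv = \sum_{h=1}^H [v_h] \otimes n_h \calH^1 \LL S_h$, I would fix $\eta, W > 0$ small and associate to each segment a rectangle $R_h$ of length $|S_h| - 2\eta$ and width $2W$, chosen so that the $R_h$ are pairwise disjoint and each avoids all other segments of $J_v$. For $\eps < W/2$, on each $R_h$ the function $v$ coincides, up to an additive constant, with the corresponding half-plane model, so $w_\eps|_{R_h}$ agrees with the mollified model and the previous step yields the contribution $(|S_h|-2\eta)\psi([v_h], n_h)\ln(1/\eps) + C_\eta$. The remainder of the full double integral decomposes into (i) cross-terms $\int_{R_h \times R_{h'}}$ with $h \ne h'$, which are $\eps$-independent by $\dist(R_h, R_{h'}) > 0$ and the bound $\Gamma \le C$, and (ii) contributions involving the residual set $E := \Omega \setminus \bigcup_h R_h$ near segment endpoints and vertices. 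Using the pointwise estimate $|w_\eps(x)-w_\eps(y)| \le C\|v\|_\infty \min(|x-y|/\eps,1)$ combined with a localized reapplication of the model computation to narrow strips of measure $O(\eta + W)$, one bounds the type-(ii) contribution by $C(\eta + W)\ln(1/\eps) + C$.

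Dividing by $\ln(1/\eps)$ and letting first $\eps \to 0$, then $\eta, W \to 0$, produces $\sum_h |S_h|\psi([v_h], n_h) = \int_{J_v \cap \Omega'} \psi([v], n)\,d\calH^1$, as desired. The main obstacle is the explicit radial integration of the model step together with the identification of $\psi$ as the logarithmic coefficient via the polar representation of Lemma \ref{lemmapsilipschit}; the bookkeeping for vertex and endpoint neighborhoods is delicate but can be arranged by choosing the truncation parameters $\eta$ and $W$ in the correct order.
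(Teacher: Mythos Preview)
Your approach is correct and is essentially the computation carried out in \cite[Section~6]{GarroniMueller2006}, to which the paper defers for the proof: the explicit model calculation for a single mollified straight jump, identifying the leading coefficient as $\psi(b,n)$ via the polar representation (\ref{eqpsibns1}), followed by localization to disjoint rectangles around the finitely many segments and control of the cross-terms and endpoint remainders. The endpoint/vertex analysis is indeed the only delicate step, and as you correctly indicate it requires reapplying the one-dimensional model estimate on the short uncovered strips rather than relying on the crude Lipschitz bound alone (which by itself would only yield $O(\eps^{-1})$ per unit area); with that proviso the argument is complete.
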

\begin{proof}
See \cite[Section 6]{GarroniMueller2006}.
\end{proof}

\begin{proof}[Proof of Lemma \ref{propupperbound}]

We choose $\Omega''$ such that $\Omega\subset\subset\Omega''\subset\subset\Omega'$, 
{ $\rho^\alpha<\dist(\Omega,\partial\Omega'')$,}
and
 $\rho^\alpha<\dist(\Omega'',\partial\Omega')$, and for
$\zeta\in\overline B_1\subset\R^2$ and $L>0$ we define the functions
$v^\zeta_L: \Omega''\to\frac{\sigma}{L}\Z^N$ by
\begin{equation*}
 v^\zeta_L (x):=\sum_{j=1}^{\lfloor L\rfloor} \frac1L v(x+\rho^\alpha\frac jL\zeta).
\end{equation*}
Note that $v^\zeta_L\in SBV(\Omega'';\frac{\sigma}{L}\Z^N)$ has jump set which is  obtained by $\lfloor L\rfloor$ copies of the jump set of $v$, translated in the direction of $\zeta$, and that $\|v^\zeta_L\|_{L^\infty(\Omega'')}\leq \|v\|_{L^\infty(\Omega')}$.
	 
We set $L_\eps:= \sigma \ln \frac1\eps$.
For $\eps\le \dist(\Omega,\partial\Omega'')$ 
we define 
\begin{equation*}
w_\eps:={\ln (1/\eps)} v^{\zeta_\eps}_{L_\eps}\ast \varphi_\eps \hskip5mm\text{ and }\hskip5mm
 \hat w_\eps:= v^{\zeta_\eps}_{L_\eps}\ast \varphi_\eps
\end{equation*}
(if $\eps>\dist(\Omega,\partial\Omega'')$ we can set $w_\eps=0$),
the vectors $\zeta_\eps\in\overline B_1$ will be chosen below.

We remark that $v\in \sigma\Z^N$ almost everywhere implies $v_{L_\eps}^{\zeta_\eps} \in \frac{1}{L_\eps} \sigma \Z^N=\frac{1}{\ln(1/\eps)} \Z^N$ a.e., therefore
 $w_\eps(x)\in\Z^N$ for any $x$ at distance at least $\eps$ from $J_{v^{\zeta_\eps}_{L_\eps}}$.
Since $J_v$ is a finite union of segments, 
{and $\dist(w_\eps,\Z^N)\le N^{1/2}$ everywhere,}
we have
 \begin{equation*}
  \limsup_{\eps\to0}\frac{1}{\eps (\ln(1/\eps))^2} \int_{\Omega} \dist^2(w_\eps,\Z^N) dx 
  \le \lim_{\eps\to0}\frac{\lfloor L_\eps\rfloor\,  N\, |\{x\in \Omega: \dist(x, 
J_v\cap\Omega')<\eps\}|}{\eps  (\ln(1/\eps))^2} =0.
 \end{equation*}

 \newcommand\epsi{{\eps_i}}
 We write the long-range elastic energy as a bilinear form, $B_{LR}^\rho:L^2(\Omega;\R^N)^2\to\R$,
\begin{equation*}
 B_{LR}^\rho(u,u'):=\int_{\Omega}\int_{\Omega\setminus B_\rho(x)} \Gamma(x-y) (u(x)-u(y))\cdot (u'(x)-u'(y)) dy dx,
\end{equation*}
%For fixed $\rho$ $B_{LR}$ is continuous with respect to strong $L^2$ convergence of its arguments.
and choose a sequence $\eps_i\to0$, $\eps_i>0$, such that 
\begin{equation}\label{eqblrepspespi}
\lim_{i\to\infty} B_{LR}^\rho(\hat w_{\eps_i},\hat w_{\eps_i})
=\limsup_{\eps\to0} B_{LR}^\rho(\hat w_\eps,\hat w_\eps).
\end{equation}
After extracting a further subsequence, we can additionally assume that $\zeta_{\eps_i}\to\zeta$ for some $\zeta\in \overline B_1$.
This defines the vector $\zeta$ in the statement (in terms of the vectors $\zeta_\eps$ chosen below).
We now show that 
\begin{equation}\label{eqwepsivzetain}
\hat w_{\eps_i}\to v^\zeta_\infty \text{ strongly in $L^2(\Omega;\R^N)$.}
\end{equation}
To see this, we first observe that since
$\|\hat w_\epsi\|_{L^\infty(\Omega)}\le \|v\|_{L^\infty(\Omega')}$
and 
$\|v_\infty^\zeta\|_{L^\infty(\Omega)}\le \|v\|_{L^\infty(\Omega')}$
it suffices to prove convergence in $L^1(\Omega;\R^N)$.
We write
\begin{align*}
 \hat w_{\eps_i}-v^\zeta_\infty=
 \varphi_\epsi\ast (v_{L_\epsi}^{\zeta_\epsi}-v_\infty^{\zeta_\epsi}) +
 \varphi_\epsi\ast(v_\infty^{\zeta_\epsi}-v_\infty^{\zeta})
 + (\varphi_\epsi \ast v_\infty^{\zeta}-v_\infty^{\zeta})
\end{align*}
and estimate the three terms separately. Convergence of the last one is immediate.
Performing an explicit computation one can show that
\begin{align*}
 \|v^{\zeta_{\eps_i}}_\infty-v^\zeta_\infty\|_{L^1(\Omega'')} &\le \rho^\alpha |Dv|(\Omega') |\zeta_{\eps_i}-\zeta|,
\end{align*}
which implies $\|\varphi_\epsi\ast(v_\infty^{\zeta_\epsi}-v_\infty^{\zeta})\|_{L^1(\Omega)}
\le \|v_\infty^{\zeta_\epsi}-v_\infty^{\zeta}\|_{L^1(\Omega'')}\to0$.
%Since $v$ is in $L^\infty(\Omega;\R^N)$, this implies $v^{\zeta_{\eps_i}}_\infty\to v^\zeta_\infty$ strongly in $L^2(\Omega;\R^N)$.
Analogously, from
\begin{align*}
 \|v^{\zeta_{\eps_i}}_{L_\epsi}-v^{\zeta_{\eps_i}}_\infty\|_{L^1(\Omega'')} &\le \frac{\rho^\alpha}{L_\epsi} |Dv|(\Omega')
+|\Omega''| \frac{L_\eps-\lfloor L_\eps\rfloor}{L_\eps} \|v\|_{L^\infty(\Omega'')}
 \end{align*}
and $\lim_{i_\to\infty} L_\epsi=\infty$
we obtain
$\| \varphi_\epsi\ast (v_{L_\epsi}^{\zeta_\epsi}-v_\infty^{\zeta_\epsi})\|_{L^1(\Omega)}\to0$. This concludes the proof of (\ref{eqwepsivzetain}).

By continuity of $B_{LR}^\rho$, (\ref{eqblrepspespi}) and (\ref{eqwepsivzetain}) imply
\begin{equation*}
\limsup_{\eps\to0} B_{LR}^\rho(\hat w_\eps,\hat w_\eps)
=\lim_{i\to\infty} B_{LR}^\rho(\hat w_{\eps_i},\hat w_{\eps_i})=
B_{LR}^\rho(v^\zeta_\infty,v^\zeta_\infty).
\end{equation*}
The short-range elastic energy can be correspondingly written, for 
a Borel set $\Borel\subseteq\R^2$,
as the bilinear form $B_{SR}^\rho(\cdot,\cdot,\Borel):L^2(\Borel;\R^N)^2\to\R$,
\begin{equation*}
 B_{SR}^\rho(u,u',\Borel):=\int_{\Borel}\int_{\Borel\cap B_\rho(x)} \Gamma(x-y) (u(x)-u(y))\cdot (u'(x)-u'(y)) dy dx.
\end{equation*}
This term will lead us to the choice of $\zeta_\eps$. %, therefore we keep the dependence on the vector $\zeta$ explicit.
We are interested in showing that for any $\eps$ there is a choice of $\zeta\in \overline{B_1}$ which permits to control the quantity
\begin{equation*}
 B_{SR}^\rho(\hat w_\eps, \hat w_\eps,\Omega)=\frac{1}{L_\eps^2} \sum_{j,j'=1}^{\lfloor L_\eps\rfloor} B_{SR}^\rho(T_j^\zeta v\ast \varphi_\eps, T_{j'}^\zeta v\ast \varphi_\eps,\Omega)
\end{equation*}
where $T_j^\zeta$ is the translation operator, $(T_j^\zeta f)(x):=f(x+j\zeta\rho^\alpha/L_\eps)$.
The separation introduced by the translations is on a length 
scale much larger than $\eps$, but still infinitesimal
(the choice of $\zeta_\eps$ below shall implicitly ensure that it is not too small), therefore it is appropriate to treat the diagonal ($j=j'$) terms separately.
Using translation invariance we can see that the diagonal contribution is
\begin{align*}
 B_{SR}^{\mathrm{diag}}(\zeta)&:=\frac{1}{L_\eps^2} \sum_{j=1}^{\lfloor L_\eps\rfloor} B_{SR}^\rho(T_j^\zeta v\ast \varphi_\eps, T_{j}^\zeta v\ast \varphi_\eps,\Omega)\\
&\le   \frac{\lfloor L_\eps\rfloor}{L_\eps^2} B_{SR}^\rho(v\ast \varphi_\eps,v\ast \varphi_\eps,\Omega'')\\
 &=  \frac{\lfloor L_\eps\rfloor \sigma}{L_\eps \ln(1/\eps)} B_{SR}^\rho(\sigma^{-1}v\ast \varphi_\eps,  \sigma^{-1}v\ast \varphi_\eps,\Omega'')
\end{align*}
and  in particular that {the latter expression} does not depend on the choice of $\zeta$. 
Since $\sigma^{-1} v\in SBV({\Omega'};\Z^N)$ is polyhedral and $\Gamma\ge0$ pointwise,
recalling Lemma \ref{lemmamollificationupper}, we obtain
\begin{equation*}
 \limsup_{\eps\to0}B_{SR}^{\mathrm{diag}}(\zeta)\le \int_{J_v\cap\Omega'} \sigma \psi\left(\frac{[v]}\sigma,n\right) d\calH^1
\end{equation*}
for any $\zeta\in \overline{B_1}$.

The off-diagonal contributions reduce to
\begin{align*}
B_{SR}^\mathrm{cross}(\zeta):=&\frac{1}{L_\eps^2} \sum_{j\ne j'} B_{SR}^\rho( 
T^\zeta_{j} v\ast \varphi_\eps, T^\zeta_{j'} v\ast \varphi_\eps,\Omega)\\
\le& \frac{1}{L_\eps^2} \sum_{j\ne j'} B_{SR}^\rho( v\ast \varphi_\eps, T^\zeta_{j-j'} v\ast \varphi_\eps,\Omega'').
\end{align*}
We average over all possible choices of the shifts $\zeta$. Precisely, we 
compute, using linearity of $B_{SR}^\rho$ in the second argument,
\begin{align*}
A_\eps:=& \int_{B_1} B_{SR}^\mathrm{cross}(\zeta) \varphi_{1}(\zeta) d\zeta 
\le\int_{B_1} \frac{1}{L_\eps^2} \sum_{j\ne j'} B_{SR}^\rho( v\ast \varphi_\eps, 
\varphi_{1}(\zeta) T_{j-j'}^\zeta v\ast \varphi_\eps,\Omega'')d\zeta\\
=&B_{SR}^\rho( v\ast \varphi_\eps, V\ast \varphi_\eps,\Omega''),
\end{align*}
with
\begin{equation*}
 V(x):=\int_{B_1} \frac{1}{L_\eps^2} \sum_{j\ne j'}
\varphi_{1}(\zeta) v(x+\frac{j-j'}{L_\eps} \rho^\alpha \zeta) d\zeta.
\end{equation*}
By a change of variables we obtain $V=  \Phi\ast v$, 
where
\begin{equation*}
 \Phi(x):= \frac{1}{L_\eps^2} \sum_{j\ne j'} \frac{L_\eps^2}{(j-j')^2} \varphi_{\rho^\alpha}\left(\frac{L_\eps}{j-j'}x\right), 
\end{equation*}
and then
\begin{equation*}
 A_\eps
\leq B_{SR}^\rho(v\ast \varphi_\eps, 
\Phi\ast v\ast \varphi_\eps,\Omega'').
\end{equation*}
We fix $p\in(2,\infty)$ and denote by $q:=p/(p-1)$ the dual exponent. Then
\begin{align*}
A_\eps\le& \int_{B_\rho} \frac{c}{|z|^3} \int_{\Omega''} | (\varphi_\eps\ast v)(x)-(\varphi_\eps\ast v)(x+z)| \\
 &\hskip1cm\,\times | (\varphi_\eps\ast \Phi\ast v)(x)-(\varphi_\eps\ast \Phi\ast v)(x+z)|dxdz \\
 &\le  \int_{B_\rho} \frac{c}{|z|^{2-1/p}}  \frac{\|v(\cdot)-v(\cdot+z)\|_{L^p(\Omega''_\eps)}}{|z|^{1/p}}  
 \frac{ \|(\Phi\ast v)(\cdot) - (\Phi\ast v)(\cdot+z)\|_{L^q(\Omega''_\eps)}}{|z|} dz,
\end{align*}
with $\Omega''_\eps:=\{x: \dist(x,\Omega'')<\eps\}$.
We estimate, for small $z$,
\begin{align*}
 \frac{ \| v(\cdot)-v(\cdot+z)\|_{L^p(\Omega''_\eps)}^p}{|z|}  
\le & 2^{p-1}\|v\|_{L^\infty(\Omega')}^{p-1}\int_{\Omega''_\eps}\frac{| v(x)-v(x+z)|}{|z|} dx
\\ \le &2^{p-1} \|v\|_{L^\infty(\Omega')}^{p-1} |Dv|(\Omega')
\end{align*}
and
\begin{equation*}
 \frac{ \| (\Phi\ast v)(\cdot) - (\Phi\ast v)(\cdot+z)\|_{L^q(\Omega''_\eps)}}{|z|} 
 \le \|\Phi\|_{L^q(\R^2)} |Dv|(\Omega'), 
\end{equation*}
so that, with $ \int_{B_\rho} \frac{1}{|z|^{2-1/p}}dz \le c \rho^{1/p}$, we conclude
\begin{equation*}
 A_\eps\le c\rho^{1/p} \|v\|_{L^\infty(\Omega')}^{1/q} 
(|Dv|(\Omega'))^{1+1/p}\|\Phi\|_{L^q(\R^2)}. 
\end{equation*}
Finally, recalling that $p>2$,
\begin{align*}
 \|\Phi\|_{L^q(\R^2)}\le&2 L_\eps \sum_{j=1}^{\lfloor L_\eps\rfloor } \frac{1}{j^2} \| 
\varphi_{\rho^\alpha} (\frac{L_\eps}{j} x) \|_{L^q(\R^2)} \\
 \le &c L_\eps \sum_{j=1}^{\lfloor L_\eps\rfloor }\frac{1}{j^2}  \left(\frac{j}{L_\eps}\right)^{2/q} \rho^{-2\alpha \frac{q-1}{q}}\\
 \le &c L_\eps^{1-2/q}  \sum_{j=1}^{\lfloor L_\eps\rfloor } \frac{1}{j^{2/p}} \rho^{-2\alpha/p}\\
\le &c L_\eps^{1-2/q} {L_\eps^{1-2/p}} \rho^{-2\alpha/p}= c \rho^{-2\alpha/p}\,.
\end{align*}
Therefore $A_\eps\le c\rho^{(1-2\alpha)/p} \|v\|_{L^\infty(\Omega')}^{1/q} (|Dv|(\Omega'))^{1+1/p}$. We finally choose $p=3$, and $\zeta_\eps$
so that it is as good as on average, in the sense that 
%$|\zeta^\eps|\ge 1/2$ and 
$B_{SR}^\mathrm{cross}(\zeta_\eps)\le A_\eps$, and conclude the proof.
\end{proof}

\section{Lower bound}
\label{sectlowerb}
In this Section we prove the lower bound. 
The idea is that the limit is given by two terms, that arise from short-range and long-range contributions to the nonlocal interaction, respectively. Indeed, one key idea in the proof of Proposition \ref{proplowerbound} is to localize the limiting energy and view it as a measure on $\Omega\times\Omega\subset\R^4$. One then shows that this measure can be written as the sum of two mutually singular terms, one supported on the diagonal and one supported outside the diagonal (see Figure \ref{figureDeltaBj}). The lower bound arises from estimating separately these two terms. In the estimate of the diagonal term, which is local in the limit, we build upon techniques obtained for a different scaling in \cite{ContiGarroniMueller2011}, see Proposition \ref{proplowerboundlocal} below. One important step is to iteratively mollify the functions along the sequence and to show that on most scales the mollification does not reduce significantly the $BV$ norm, which implies that the functions are approximately one-dimensional at that scale. 
The proof is done by showing that one can choose a scale that contains, up to higher-order terms, as much energy as the average scale, and that at the same time has a small loss of $BV$ norm, see (\ref{eqhgoodp}) and (\ref{eqhgoodpdue}) below.
\begin{proposition}\label{proplowerbound}
 Under the assumptions of Theorem \ref{theorem1}, for any 
 {$u\in BV(\Omega;\R^N)$ and any}
 sequences $\eps_i\to0$ and $u_i\in L^2(\Omega;\R^N)$ with
 $u_i/(\ln(1/\eps_i))\to u$ in $L^2(\Omega;\R^N)$ %, and any open ball $B\subset\Omega$, 
 one has
 \begin{equation}\label{eqproplblb}
  F_0[u,\Omega]\le  \liminf_{i\to\infty} \frac{E_{\eps_i}[u_i,\Omega]}{(\ln(1/\eps_i))^2}\,,
 \end{equation}
 where $F_0$ was defined in (\ref{eqdefF}).
\end{proposition}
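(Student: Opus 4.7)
The plan is to view the energy as a measure on $\Omega\times\Omega$, split it at a cutoff distance $\rho>0$ into a diagonal (short-range) and off-diagonal (long-range) piece, bound each from below separately, and let $\rho\to 0$. I pass to a subsequence realising the $\liminf$ and assume it is finite. By Proposition \ref{propcompactness}, the limit $u$ lies in $BV(\Omega;\R^N)\cap H^{1/2}(\Omega;\R^N)$, so $F_0[u,\Omega]$ is finite, and $\tilde u_i:=u_i/\ln(1/\eps_i)\to u$ in $L^2(\Omega;\R^N)$.

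For the off-diagonal piece with fixed $\rho$, the kernel $\Gamma\chi_{\{|x-y|>\rho\}}$ is bounded, so the quadratic form $B^\rho(v):=\int_{(\Omega\times\Omega)\cap\{|x-y|>\rho\}}\Gamma(x-y)(v(x)-v(y))\cdot(v(x)-v(y))\,dxdy$ is continuous on $L^2(\Omega;\R^N)$; hence $(\ln(1/\eps_i))^{-2}B^\rho(u_i)=B^\rho(\tilde u_i)\to B^\rho(u)$. Since $u\in H^{1/2}$, $B^\rho(u)$ converges as $\rho\to 0$ to the full long-range integral of $F_0[u,\Omega]$.

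For the diagonal piece, I fix $\omega\subset\subset\Omega$ and $\delta\in(0,1/2)$ and apply Proposition \ref{propmakeubv} to produce an integer-valued companion $v_i\in BV(\omega;\Z^N)$ and a scale $k_i\in\N$ with $2^{-k_i}\sim\eps_i^{1-\delta}$ such that $\tilde v_i:=v_i/\ln(1/\eps_i)\to u$ in $L^1(\omega)$, $|Dv_i|(\omega)\le CE_{\eps_i}[u_i]/\ln(1/\eps_i)$, and
\[
\sum_{h=0}^{k_i} E^*_h[v_i,\omega]\le E_{\eps_i}[u_i,\Omega](1+o(1)).
\]
Choosing $h_\rho\in\N$ with $2^{-h_\rho}\sim\rho$, the dyadic blocks with $h<h_\rho$ correspond to scales above $\rho$ and can be bounded crudely by $Ch_\rho|Dv_i|(\omega)^2\le C\log(1/\rho)\ln(1/\eps_i)$, which is negligible after division by $(\ln(1/\eps_i))^2$. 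The remaining block, concentrated on $|x-y|<\rho$, is analysed by the iterative dyadic mollification argument of \cite{ContiGarroniMueller2011}: mollify $v_i$ at each scale $2^{-h}$ and use a pigeonhole argument to select $\sim\ln(1/\eps_i)$ good scales at which the mollification does not reduce $|Dv_i|$ appreciably, so that the mollified $v_i$ is approximately one-dimensional on that scale; this yields a per-scale lower bound $E^*_h[v_i,\omega]\gtrsim\int_{J_{v_i}\cap\omega}\psi([v_i],\nu)\,d\calH^1$ which, summed and relaxed from $\psi$ to $\psi_\rel$ via $BV$-ellipticity, gives, with $\sigma_i:=1/\ln(1/\eps_i)$,
\[
\frac{E_{\eps_i}[u_i,\Omega]}{(\ln(1/\eps_i))^2}(1+o(1))+o(1)\ge \ELTrel_{\sigma_i}[\tilde v_i,\omega].
\]
Since $\tilde v_i\to u$ in $L^1(\omega)$ and $\ELTrel_\sigma$ shares the $\Gamma$-limit $\ELT_0$ with $\ELT_\sigma$, Theorem \ref{theo:homog} yields $\liminf_i\ELTrel_{\sigma_i}[\tilde v_i,\omega]\ge\ELT_0[u,\omega]=F_\self[u,\omega]$. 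Taking $\omega\uparrow\Omega$ and combining with the long-range bound, then sending $\rho\to 0$, produces the desired $F_0[u,\Omega]$.

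The main obstacle will be the clean separation of the diagonal (line-tension) and off-diagonal ($H^{1/2}$) contributions without double-counting: the dyadic cutoff at scale $\rho$ achieves this, with the intermediate scales $h<h_\rho$ absorbed as a $\log(1/\rho)\ln(1/\eps_i)$ error, but verifying that the small-scale block still produces the full strain-gradient density $g$ is delicate. The most delicate technical ingredient is the pigeonhole selection of good mollification scales from \cite{ContiGarroniMueller2011}, which shows that a $BV$-function whose mollification at scale $2^{-h}$ retains full $BV$-norm must, on that scale, be quasi-one-dimensional; this quasi-one-dimensionality is precisely what permits matching the short-range dyadic kernel energy against the line-tension integrand $\psi$ and, after summation over the $\sim\ln(1/\eps_i)$ good scales and relaxation, against $\psi_\rel$.
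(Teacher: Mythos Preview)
Your overall architecture---separate the energy into a near-diagonal and a far-diagonal part and bound each from below---is the same as the paper's. But your implementation of the split has a genuine gap: you double-count the energy. You apply Proposition~\ref{propmakeubv} with the \emph{full} functional $E_{\eps_i}[u_i,\Omega]$ on the right-hand side to obtain $\sum_{h=0}^{k_i}E^*_h[v_i,\omega]\le E_{\eps_i}[u_i,\Omega](1+o(1))$, and then feed this into the mollification/pigeonhole machinery to get $F_\self[u,\omega]$. At the same time you claim the off-diagonal piece $B^\rho(\tilde u_i)\to B^\rho(u)$. But $B^\rho(u_i)$ is part of $E_{\eps_i}[u_i,\Omega]$, so the same energy is being used twice. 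Discarding the dyadic blocks $h<h_\rho$ on the left-hand side of the Proposition~\ref{propmakeubv} inequality only weakens that inequality; it does not convert the right-hand side into the short-range energy $E_{\eps_i}^{SR,\rho}$. (Also, the bound you claim for those blocks, $Ch_\rho|Dv_i|(\omega)^2\le C\log(1/\rho)\ln(1/\eps_i)$, is wrong: $|Dv_i|\sim\ln(1/\eps_i)$, so the square is $\sim(\ln(1/\eps_i))^2$ and does not vanish after normalization.) To make your kernel-cutoff approach work you would need a version of Proposition~\ref{propmakeubv} for the truncated kernel $\Gamma\chi_{\{|z|<\rho\}}$, which is not of the form (\ref{eq:derGammaintro}) and is not supplied.

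The paper avoids this by a different mechanism. Instead of cutting the kernel at scale $\rho$, it passes to the weak limit $\mu$ of the energy measures on $\Omega\times\Omega$, covers $\Omega$ by disjoint balls $B_j$ of radius $<\eta$ (Vitali), and applies the local lower bound (Proposition~\ref{proplowerboundlocal}) on each $B_j$ with the \emph{full} functional $E_{\eps_i}[\cdot,B_j]=\mu_i(B_j\times B_j)$. Because $\mathrm{diam}(B_j)<2\eta$, the sets $B_j\times B_j$ lie in the $2\eta$-neighbourhood $\Delta_\eta$ of the diagonal, so summing gives $F_\self[u,\Omega]\le\mu(\Delta_\eta)$ for every $\eta$, hence $\le\mu(\Delta)$. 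The long-range bound $\mu_{LR}\le\mu$ follows from Fatou, and since $\mu_{LR}\ll\calL^4$ one has $\mu_{LR}(\Delta)=0$; thus $F_\self[u,\Omega]+\mu_{LR}(\Omega\times\Omega)\le\mu(\Delta)+\mu((\Omega\times\Omega)\setminus\Delta)=\mu(\Omega\times\Omega)$. The point is that localising in the \emph{domain} (to small balls) lets one invoke Proposition~\ref{propmakeubv} as stated, while still forcing the resulting contribution into an arbitrarily thin neighbourhood of the diagonal; the measure-theoretic bookkeeping then guarantees no overlap with the long-range term.
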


The proof is based on the following local lower bound, which relates the short-range part of the energy to $F_\self$.
\begin{proposition}\label{proplowerboundlocal}
 Under the assumptions of Theorem \ref{theorem1}, 
 for any   {$u\in BV(\Omega;\R^N)$ and any}
sequences $\eps_i\to0$ and $u_i\in L^2(\Omega;\R^N)$ with
 $u_i/(\ln(1/\eps_i))\to u$ in $L^2(\Omega;\R^N)$, and any open set $\omega\subset\Omega$, 
 one has
 \begin{equation*}
  F_\self[u,\omega]\le  \liminf_{i\to\infty} \frac{E_{\eps_i}[u_i,\omega]}{(\ln(1/\eps_i))^2}\,,
 \end{equation*}
 where $F_\self$ was defined in (\ref{eqdefFself}).
\end{proposition}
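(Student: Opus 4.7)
The plan is to combine the multi-scale approximation of Proposition \ref{propmakeubv} with the homogenization result of Theorem \ref{theo:homog}, bridged by a scale-by-scale lower bound for the truncated nonlocal energies. After passing to a subsequence we may assume that $E_{\eps_i}[u_i,\omega]/(\ln(1/\eps_i))^2 \to \beta < \infty$, the value of the liminf. Since $\omega\mapsto F_\self[u,\omega]$ is the integral of a Radon measure, inner regularity reduces the claim to proving $F_\self[u, \omega'] \le \beta$ for every open $\omega' \subset\subset \omega$ and then letting $\omega' \nearrow \omega$.

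Fix such an $\omega'$ and $\delta \in (0, 1/2)$. Proposition \ref{propmakeubv} (applied with $\Omega$ replaced by $\omega$) produces, for each sufficiently large $i$, an integer $k_i$ with $\eps_i^{1-\delta/2} \le 2^{-k_i} \le \eps_i^{1-\delta}$, so that $k_i \ln 2 \ge (1-\delta)\ln(1/\eps_i)$, and a function $v_i \in BV(\omega'; \Z^N)$ with $\sum_{h=0}^{k_i} E_h^*[v_i, \omega'] \le (1+o(1))\, E_{\eps_i}[u_i, \omega]$, $|Dv_i|(\omega') \le C_\delta \ln(1/\eps_i)$, and $\|v_i - u_i\|_{L^1(\omega')} = o(\ln(1/\eps_i))$. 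Setting $\sigma_i := 1/\ln(1/\eps_i)$, the rescaled sequence $\tilde v_i := \sigma_i v_i \in BV(\omega'; \sigma_i \Z^N)$ converges to $u$ in $L^1(\omega';\R^N)$ along a further subsequence.

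The crux of the proof is the per-scale lower bound
\[
E_h^*[v_i, \omega'] \ge (\ln 2)(1 - o(1)) \int_{J_{v_i} \cap \omega'} \psi([v_i], \nu)\, d\calH^1,
\]
valid for every $h$ in a subset $G_i \subseteq \{0, \ldots, k_i\}$ of cardinality $(1-o(1)) k_i$. This is to be established by an iterative-mollification argument in the spirit of \cite{ContiGarroniMueller2011}: at the dyadic scale $2^{-h}$ one considers $v_i \ast \varphi_{2^{-h}}$, and a pigeonhole argument shows that for all but $o(k_i)$ values of $h$ this mollification preserves $(1-o(1))$ of the BV mass of $v_i$, so that locally $v_i$ resembles a one-dimensional step along the normal $\nu$ to $J_{v_i}$. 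For such a profile an explicit computation based on the polar representation (\ref{eqpsibns1}) identifies the quadratic form with kernel $\Gamma_h$ as $\ln 2 \cdot \psi([v_i],\nu)$ per unit length of $J_{v_i}$, the factor $\ln 2$ arising from the radial integration of the profile of $\Gamma_h$. Pointwise nonnegativity of $\hat\Gamma$ ensures that cross-contributions between distinct jump components only improve the bound. This scale-by-scale line-tension inequality is the main technical obstacle and is essentially a transposition of the lower-bound machinery of \cite{ContiGarroniMueller2011} into the present super-critical scaling.

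Summing over $h \in G_i$ and using the dyadic bound of Proposition \ref{propmakeubv} yields
\[
\int_{J_{v_i} \cap \omega'} \psi([v_i], \nu)\, d\calH^1 \le \frac{1+o(1)}{1-\delta} \cdot \frac{E_{\eps_i}[u_i, \omega]}{\ln(1/\eps_i)}.
\]
Multiplying by $\sigma_i$ identifies the left-hand side with $\ELT_{\sigma_i}[\tilde v_i, \omega']$, so $\liminf_i \ELT_{\sigma_i}[\tilde v_i, \omega'] \le \beta/(1-\delta)$. The $\Gamma$-liminf inequality of Theorem \ref{theo:homog}, together with the identification $\ELT_0[u, \omega'] = F_\self[u, \omega']$ afforded by the convexity and 1-homogeneity of $g$, then gives $F_\self[u, \omega'] \le \beta/(1-\delta)$. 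Letting $\delta \to 0$ and $\omega' \nearrow \omega$ concludes the proof.
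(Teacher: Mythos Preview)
Your proposal has a genuine gap at the ``crux'' step. The inequality
\[
E_h^*[v_i,\omega'] \ge (\ln 2)(1-o(1))\int_{J_{v_i}\cap\omega'} \psi([v_i],\nu)\,d\calH^1
\]
for most $h$ is simply false, and so is the summed version $\int_{J_{v_i}}\psi([v_i],\nu)\,d\calH^1 \le C\, E_{\eps_i}[u_i,\omega]/\ln(1/\eps_i)$. Take $v_i$ with $k_i$ parallel unit jumps of alternating sign $\pm e_1$, all at mutual distance $2^{-k_i}$, across a segment of length one. Then $\int_{J_{v_i}}\psi = k_i\psi(e_1,n)$, while for every scale $h\le k_i-2$ the kernel $\Gamma_h$ averages the oscillation out and $E_h^*[v_i]\approx 0$; only the finest $O(1)$ scales contribute, so $\sum_h E_h^*[v_i] = O(k_i)$ and $E_{\eps_i}/\ln(1/\eps_i)=O(1)$. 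Your inequality would force $k_i\psi(e_1,n)\le C$, which fails. The pigeonhole you invoke does \emph{not} say that mollification preserves $(1-o(1))$ of $|Dv_i|$; it says the absolute drop $|Dz_{h+m}|-|Dz_h|$ is small for most $h$, which in this example is trivially zero at most scales because $z_h$ is already nearly constant. The side remark that ``pointwise nonnegativity of $\hat\Gamma$'' makes cross-terms nonnegative is also wrong: positivity of the quadratic form $\xi\mapsto\hat\Gamma\xi\cdot\xi$ does not make the bilinear form sign-definite.

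What the paper actually does, and what is unavoidable here, is to abandon $v_i$ as the test sequence for Theorem~\ref{theo:homog}. Using Proposition~\ref{propconstruction} (which is \cite[Prop.~7.1]{ContiGarroniMueller2011}) one builds, at a carefully selected good scale $h$, a \emph{different} function $w\in BV(\hat\omega;\Z^N)$ whose \emph{relaxed} line tension $\int\psi_\rel([w],\nu)\,d\calH^1$ is controlled by the single truncated term $E^*_{h+t}$ plus errors governed by the BV drop $|Dz_{h+m}|-|Dz_h|$. This $w$ records what $v_i$ looks like at scale $2^{-h}$ after microstructure has been absorbed, is $L^1$-close to $v_i$, and it is $u^k:=w/(k\ln 2)$ --- not $\tilde v_i$ --- that one feeds into the $\Gamma$-liminf of Theorem~\ref{theo:homog}. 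Without this replacement of $v_i$ by $w$ (equivalently, without invoking Proposition~\ref{propconstruction} or reproving it), the argument cannot close.
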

We postpone the proof of Proposition \ref{proplowerboundlocal}, and first show that it implies 
Proposition \ref{proplowerbound}.
\begin{proof}[Proof of Proposition \ref{proplowerbound}]
We can assume that the $\liminf$ in (\ref{eqproplblb}) is finite and, after 
passing to a subsequence, that it is a limit.
By Proposition \ref{propcompactness} we can assume $u\in BV(\Omega;\R^N)\cap H^{1/2}(\Omega;\R^N)$.

We start by localizing the energy. We denote by $\Delta:=\{(x,x): x\in\R^2\}\subset\R^4$ the diagonal set in $\R^4$ and by $P:\R^4\to\R^2$, $P(x_1,x_2,y_1,y_2):=(x_1,x_2)$, the projection on the first two components. For any Borel set $\Borel \subseteq\Omega\times\Omega$ we define
\begin{align*}
 \mu_i(\Borel ):=&\frac{1}{\eps_i (\ln(1/\eps_i))^2} \int_{P(\Borel \cap \Delta)} \dist^2(u_i(x), \Z^N) d\calL^2(x) \\
 &+
 \int_\Borel  \Gamma(x-y) \frac{u_i(x)-u_i(y)}{\ln(1/\eps_i)} \cdot \frac{u_i(x)-u_i(y)}{\ln(1/\eps_i)} d\calL^{4}(x,y),
\end{align*}
so that $(\ln(1/\eps_i))^{-2}E_{\eps_i}[u_i,\Omega]=\mu_i(\Omega\times\Omega)$.
We observe that $\mu_i$ is a Radon measure, and after extracting a further subsequence we can assume that 
{$\mu_i$ converges weakly in measures to some measure $\mu$, which implies}
$\mu(A\times A)\le \lim_{i\to\infty} \mu_i(A\times A)$ for any open set $A\subseteq\Omega$. To conclude it suffices to prove that
\begin{equation}\label{eqf0muomegaom}
  F_0[u,\Omega]\le \mu(\Omega\times\Omega).
\end{equation}

In order to treat the long-range part of the interaction we define a measure 
$\mu_{LR}$ on $\Omega\times\Omega$ by
 \begin{equation*}
  \mu_{LR}(\Borel ) := \int_\Borel  \Gamma(x-y) (u(x)-u(y))\cdot (u(x)-u(y)) d\calL^4(x,y),
 \end{equation*}
 for any Borel set $\Borel \subseteq\Omega\times\Omega$. 
 Since $u\in H^{1/2}(\Omega;\R^N)$, we have $\mu_{LR}(\Omega{\times\Omega})<\infty$.
 Since $\Gamma(x-y) 
\xi\cdot\xi\ge0$ for any {$\xi\in\R^N$ and} $(x,y)\in\Omega\times\Omega$, and (possibly extracting a further subsequence)
 $(u_i(x)-u_i(y))/\ln(1/\eps_i)$ converges pointwise to $u(x)-u(y)$ for $\calL^4$-almost every $(x,y)$, by Fatou's Lemma we obtain
 \begin{equation*}
  \mu_{LR}(\Borel )\le  \liminf_{i\to\infty} \mu_i(\Borel )
 \end{equation*}
 for any Borel set $\Borel\subseteq\Omega\times\Omega$ 
 and in particular $\mu_{LR}(B_r^{(4)}(x))\le \mu(B_R^{(4)}(x))$ if $r<R$ and $B_R^{(4)}(x)\subset\Omega\times\Omega$, 
where $B_r^{(4)}(x)$ is the four-dimensional ball of radius $r$ centered at $x\in\R^4$.
 Since $\mu_{LR}$ is absolutely continuous with respect to $\calL^4$, we conclude
 \begin{equation}\label{eqLRMmu}
 \mu_{LR}(\Borel )\le \mu(\Borel ) \text{ for any Borel set $\Borel \subseteq\Omega\times\Omega$}.
 \end{equation}

\begin{figure}
 \begin{center}
  \includegraphics[width=5cm]{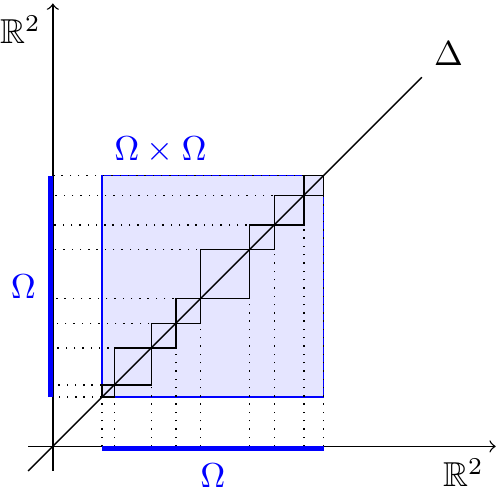}
 \end{center}
\caption{\label{figureDeltaBj} Sketch of the set $(\Omega\times \Omega)\cap\Delta$. The set $\Omega$ is covered (up to a null set) by finitely many balls $B_j$, the set $(\Omega\times \Omega)\cap\Delta$ is correspondingly covered (up to a null set) by 
the products $B_j\times B_j$.}
\end{figure}

We now deal with the short-range part of the energy, which concentrates on the diagonal set. 
We define the measure
\begin{equation*}
 \lambda := g\left(\frac{dDu}{d|Du|}\right) |Du|
\end{equation*}
so that $F_\self[u,E]=\lambda(E)$ for any Borel set $E\subseteq\Omega$ (we recall that $F_\self$ has been defined in (\ref{eqdefFself})). Since $u\in BV(\Omega;\R^N)$, $\lambda(\Omega)<\infty$.
Let $\eta>0$. For each $x\in \Omega$ there are arbitrarily small $r>0$ such that
$B_{2r}(x)\subset\Omega$, $2r<\eta$,  $\mu(\partial (B_r(x)\times B_r(x)))=0$ and 
$\lambda(\partial B_r(x))=0$.
By Vitali's covering theorem we can find countably many such balls, denoted by $(B_j)_{j\in\N}$, such that they are pairwise disjoint, have centers in $\Delta\cap(\Omega\times\Omega)$, and
\begin{equation*}
 (\Omega\times\Omega)\cap \Delta\subseteq N\cup\bigcup_j(B_j\times B_j)  \text{ with } \mu(N)=\lambda(PN)=0
\end{equation*}
(see Figure \ref{figureDeltaBj}).
By Proposition \ref{proplowerboundlocal} below applied with $\omega=B_j$ and using that 
$\mu(\partial(B_j\times B_j))=0$ we have
\begin{equation}\label{eqlambdamu}
 \lambda(B_j)\le \mu(B_j\times B_j).
\end{equation}
Then, denoting $\Delta_\eta:=\{(x,y)\in\R^2\times\R^2: |x-y|<2\eta\}$,
\begin{equation*}
 \lambda(\Omega)=\sum_{j\in\N}\lambda(B_j)\le \sum_{j\in\N} \mu(B_j\times B_j)\le \mu((\Omega\times\Omega)\cap\Delta_\eta).
\end{equation*}
Since this holds for any $\eta>0$, and $\mu(\Omega\times\Omega)<\infty$, we conclude
\begin{equation*}
 \lambda(\Omega)\le \mu((\Omega\times\Omega)\cap\Delta).
\end{equation*}
Recalling (\ref{eqLRMmu}) and that $\mu^{LR}\ll \calL^4$, we obtain $\mu_{LR}((\Omega\times\Omega)\cap\Delta)=0$ and
\begin{align*}
F_0[u,\Omega]=&
 \lambda(\Omega)+\mu_{LR}(\Omega\times\Omega) =
 \lambda(\Omega)+\mu_{LR}((\Omega\times\Omega)\setminus\Delta) \\
 \le& 
 \mu((\Omega\times\Omega)\cap\Delta)+\mu((\Omega\times\Omega)\setminus\Delta) = \mu(\Omega\times\Omega).
\end{align*}
This concludes the proof of (\ref{eqf0muomegaom}) and therefore of the proposition.
\end{proof}

It remains to prove the local lower bound stated in Proposition \ref{proplowerboundlocal}.
The proof uses a result from \cite{ContiGarroniMueller2011}  in which it is shown that  the nonlocal energy of almost-one-dimensional $BV$ phase fields controls the  line-tension energy of a similar field, that we recall in Proposition \ref{propconstruction}.
We start by fixing a mollifier $\varphi_0\in C^\infty_c(B_1;[0,\infty))$ with $\int_{B_1}\varphi_0dx=1$ and $\varphi\ge 1$ on $B_{1/2}$, and scaling it to
$\varphi_h(x):=2^{2h}\varphi_0(2^hx)$. We remark that the index $h$ in $\varphi_h$ denotes the exponent, at variance with the usage in the previous part of {this} paper, and recall the definition of the truncated energy in (\ref{eqdefestk}).

\begin{proposition}[\protect{\cite[Prop. 7.1]{ContiGarroniMueller2011}}]
\label{propconstruction}
Let $\omega\subset\subset\Omega$ be two bounded open sets, 
$u\in W^{1,1}(\Omega;\R^N)$,
$M>1$, $h,t\in\N$ with $t\ge 3$, $\zeta\in(0,1)$.
Assume $\dist(\omega,\partial\Omega)\ge 2^{-h+1}$.

Then there is $w=w_{h,\zeta,t,M}\in BV(\omega;\Z^N)$ such that
\begin{alignat}{1}
(\ln 2) \int_{J_w\cap \omega} \psi_\rel([w],\nu) \, d\calH^1 \le& 
(1+\zeta+c 2^{-t}) E^*_{h+t}[u, \Omega] 
+\frac{C_M}\zeta 2^{h+t} \|\dist(u,\Z^N)\|_{L^1(\Omega)}
\nonumber
\\
&+\frac{C_M}\zeta 2^{t} A^{5/6} 
\left(|Du|(\Omega) - |D(u \ast\varphi_{h})|(\omega)\right)^{1/6}
\nonumber
\\ 
&+ \frac{c}{M^{1/2}} 2^{t/2} A
\label{eqpropconstrlongestimateenergy}
\end{alignat}
and
\begin{alignat}{1}
\|u-w\|_{L^1(\omega)}\le& \frac{c}{M^{1/2}} 2^{-h+t/2} A
+ C_M \|\dist(u,\Z^N)\|_{L^1(\Omega)}
\nonumber
\\
&
+ C_M 2^{-h} A^{2/3} 
\left(|Du|(\Omega) - |D(u \ast\varphi_{h})|(\omega)\right)^{1/3}\,.
\label{eqpropconstrlongestimateLone}
\end{alignat}
Here $A:=\max\{|Du|(\Omega), E^*_{h+t}[u,\Omega]\}$. The constant $c$ may depend on $N$ and $\Gamma$, the constant $C_M$ also on $M$. 
\end{proposition}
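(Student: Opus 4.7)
The plan is to construct $w$ by a mollify--quantize--relax procedure, and to match dyadic scales of the kernel $\Gamma_{[0,h+t]}$ with the line-tension energy of the resulting step function. The starting point is $\tilde u:=u\ast\varphi_h$, which is smooth at scale $2^{-h}$ and satisfies $\|u-\tilde u\|_{L^1(\omega)}\lesssim 2^{-h}|Du|(\Omega)$. The quantity $|Du|(\Omega)-|D\tilde u|(\omega)$ appearing in the error terms is the ``mollification loss''; by Jensen, $|D\tilde u|(\omega)\le|Du|(\Omega)$, and this loss is small whenever the gradient of $u$ is already concentrated at scales $\ge 2^{-h}$. For a translation parameter $a\in[0,1)^N$, the componentwise floor $w_a(x):=\lfloor\tilde u(x)+a\rfloor$ lies in $\Z^N$ a.e., and by Fubini plus the coarea formula there is a choice of $a$ for which $|Dw_a|(\omega)$ is comparable to $|D\tilde u|(\omega)$. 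The $L^1$ bound \eqref{eqpropconstrlongestimateLone} then follows from the triangle inequality $\|u-w_a\|_{L^1}\le\|u-\tilde u\|_{L^1}+\|\tilde u-w_a\|_{L^1}$, the last piece controlled by $C_M\|\dist(u,\Z^N)\|_{L^1}$ plus an error on the thin layers where $\tilde u$ deviates from $u$ by more than $1/2$, handled by the parameter $M$.

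The central identity for the energy bound is the direct computation, for a single planar jump $u^0(x):=b\chi_{\{x\cdot n>0\}}$ in a ball $B_R$,
\begin{equation*}
 E^*_k[u^0,B_R]=R\,\psi(b,n)\bigl((k+1)\ln 2+O(1)\bigr),
\end{equation*}
which follows from \eqref{eq:derGammaintro} together with the polar-coordinate identity \eqref{eqpsibns1}: each dyadic annulus $|z|\in[2^{-j-1},2^{-j}]$ contributes $\ln 2\cdot\psi(b,n)R$ to the truncated energy. This is the source of the prefactor $\ln 2$ on the left of \eqref{eqpropconstrlongestimateenergy}: we pay for exactly one copy of the line-tension integral using the nonlocal energy budget $E^*_{h+t}$. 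The factor $(1+\zeta)$ absorbs the cross-terms that arise when $w$ has many jumps rather than a single isolated planar jump, estimated by a Cauchy--Schwarz step between diagonal (single-jump) and off-diagonal contributions; the term $c\,2^{-t}$ comes from the inner cutoff of the kernel at scale $2^{-(h+t)}$, which replaces $|z|^{-3}$ by the bounded value $2^{3(h+t+1)}$ and changes the scale integral by a geometrically decaying error in $t$.

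The main technical difficulty will be controlling $E^*_{h+t}[w,\Omega]-E^*_{h+t}[u,\Omega]$: at scales finer than $2^{-h}$ the function $u$ is smooth while $w$ jumps sharply, so the two energies disagree in an uncontrolled way at the highest scales. I would split $J_w$ into ``good'' segments, where $\tilde u$ oscillates by at most $M$ across the $2^{-h}$-tube around the segment, and ``bad'' segments. On good segments the jump geometry is tightly controlled, producing the exponents $5/6,1/6$ and $2/3,1/3$ in \eqref{eqpropconstrlongestimateenergy} and \eqref{eqpropconstrlongestimateLone} via a Gagliardo--Nirenberg-type interpolation between the small mollification loss $|Du|(\Omega)-|D\tilde u|(\omega)$ and the coarse control $A:=\max\{|Du|(\Omega),E^*_{h+t}[u,\Omega]\}$. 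The bad segments, whose measure is of order $A/M$, are absorbed into the $c\,M^{-1/2}2^{t/2}A$ remainder. The short-range penalty $2^{h+t}\|\dist(u,\Z^N)\|_{L^1(\Omega)}$ enters when rewriting the well term $\eps^{-1}W(u)\gtrsim \eps^{-1}\dist^2(u,\Z^N)$ as a measure-theoretic constraint on the ``thick layers'' $\{\dist(u,\Z^N)\ge 1/4\}$: their area is bounded by $\|\dist(u,\Z^N)\|_{L^1}$, and they contribute to the nonlocal energy at the finest retained scale $2^{-(h+t)}$ a term of order $2^{h+t}\|\dist(u,\Z^N)\|_{L^1}$.

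Finally, to replace $\psi$ by the $BV$-elliptic envelope $\psi_\rel$ on the left-hand side, I would apply a local surgery on each segment of $J_w$: on those segments where $\psi([w],\nu)>\psi_\rel([w],\nu)+\delta$, replace $w$ inside a thin tubular neighbourhood by a recovery competitor for the infimum defining $\psi_\rel$ in \eqref{eqdefpsirel}. Such a competitor exists by definition of the $BV$-elliptic envelope, introduces $L^1$-error and nonlocal-energy error confined to the tubes, and since its internal microstructure lives on a scale much smaller than $2^{-h}$, its contribution to $E^*_{h+t}$ remains below the truncation threshold and can be absorbed into the $\zeta\, E^*_{h+t}[u,\Omega]$ term by choosing the tubes sufficiently thin.
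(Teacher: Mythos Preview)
This proposition is not proved in the present paper: it is quoted verbatim from \cite[Prop.~7.1]{ContiGarroniMueller2011}, and the only information the paper gives about the argument is the remark immediately following the statement, namely that ``the proof is based on covering $\omega$ with squares contained in $\Omega$ and performing a separate estimate on each square''. So there is no in-paper proof to compare against beyond this one structural hint.

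Your sketch captures the right ingredients (mollify at scale $2^{-h}$, quantize to $\Z^N$, compare dyadic shells of $\Gamma$ to the line-tension density $\psi$, then pass to $\psi_\rel$), but it deviates from the cited proof precisely at the structural level: you try to run the estimate globally on $\omega$, whereas the original argument localizes to a grid of squares of side $\sim 2^{-h}$ and performs a separate one-dimensional estimate on each square. That localization is what actually produces the specific interpolation exponents $5/6,1/6$ and $2/3,1/3$: on each square one compares $|Du|$ with $|D(u\ast\varphi_h)|$, and the smallness of that difference on most squares forces near-one-dimensionality there; the bad squares are few and are absorbed into the $M^{-1/2}2^{t/2}A$ term. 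Your ``Gagliardo--Nirenberg-type interpolation'' line is a placeholder for this mechanism, not a replacement for it, and as written it does not explain why those particular exponents appear.

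There is also a genuine gap in your final step. You propose to pass from $\psi$ to $\psi_\rel$ by inserting, inside thin tubes around each jump segment of $w$, a recovery microstructure for the $BV$-elliptic envelope, and you claim its contribution to $E^*_{h+t}$ is negligible because it lives at a scale much smaller than $2^{-h}$. But $E^*_{h+t}$ sees all scales down to $2^{-(h+t)}$, and the microstructure you insert is exactly a fine oscillation in $\Z^N$; its self-energy in the truncated kernel is of order $(\text{length of microstructure})\cdot(\text{number of retained scales})$, which is not small. In the cited proof the passage to $\psi_\rel$ is not done by after-the-fact surgery on $w$; rather, the construction of $w$ on each square already uses a near-optimal $BV(\cdot;\Z^N)$ competitor, so that $\psi_\rel$ appears directly in the estimate of the line-tension integral. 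Without the square-by-square localization, your surgery step as stated would fail.
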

We remark that the statement of Proposition 7.1 in \cite{ContiGarroniMueller2011} contains the unnecessary assumption that both sets are Lipschitz. The proof is based on covering $\omega$ with squares contained in $\Omega$ and performing a separate estimate on each square, in particular it never uses this assuption.

We finally give a proof of the lower bound in  Proposition~\ref{proplowerboundlocal}. 
The following argument is a modification of \cite[Prop. 8.1]{ContiGarroniMueller2011}.  It is here used only in the case that $\omega$ is a ball.

\begin{proof}[Proof of Proposition~\ref{proplowerboundlocal}]
It suffices to prove the estimate in the case $\omega=\Omega$ (otherwise we restrict all functions to $\omega$, and then  relabel $\omega$ as $\Omega$). We can also assume that the right-hand side is finite, and extract a subsequence such that the $\liminf$ is a limit.
We fix $\hat\omega\subset\subset\Omega$ and prove that
 \begin{equation}\label{eqeinftyeepsi}
  F_\self[u,\hat\omega] \le \liminf_{i\to\infty} \frac{E_{\eps_i}[u_i,\Omega]}{(\ln(1/\eps_i))^2}\,.
 \end{equation}
Taking the supremum over all such sets $\hat\omega$ will conclude the proof.

It remains to prove (\ref{eqeinftyeepsi}). To do this we choose a Lipschitz set $\Omega'$ such that
$\hat\omega\subset\subset\Omega'\subset\subset\Omega$ and 
fix $\delta>0$. By
Proposition \ref{propmakeubv} for  $i$  sufficiently large there are $k_i\in\N$ with
\begin{equation}\label{eqepsiki}
\e_i^{1-\delta/2}\le 2^{-k_i} \le \e_i^{1-\delta},
\end{equation}
which implies $(1-\delta)\ln\frac1{\eps_i}\le k_i\ln 2$, 
and
 a function
$v_{k_i,\delta}\in BV(\Omega';\Z^N)$ such that
\begin{equation}\label{eqvkideltauil1}
\lim_{i\to\infty} \|v_{k_i,\delta}-u_i\|_{L^1(\Omega')}
\le \lim_{i\to\infty}
C 2^{-k_i/2}\left( \ln \frac1{\eps_i}\right)^{1/2}=0
\end{equation}
and, with (\ref{eqsumehstar}),
\begin{equation}\label{eqliminfkliminfi}
\begin{split}
\liminf_{i\to\infty} \frac{1}{k_i^2} \sum_{h=0}^{k_i}
E^*_{h}[v_{k_i,\delta},\Omega'] 
%=&
%\liminf_{i\to\infty} \frac{1}{k_i^2}
%E^{k_i}[v_{k_i,\delta},\Omega'] \\
\le& (\ln 2)^2\frac{1}{(1-\delta)^2} \liminf_{i\to\infty}
\frac{E_{\e_i}[u_i,\Omega]}{(\ln(1/\eps_i))^2} <\infty\,.
\end{split}
\end{equation}
With (\ref{eqpropmakeubvbvnorm}) we see that 
there is $A_\delta>0$ such that 
\begin{equation}\label{eqdefAdelta}
\frac{1}{k_i^2} \sum_{h=0}^{k_i}
E^*_h[v_{k_i,\delta},\Omega'] + \frac1{k_i} |Dv_{k_i,\delta}|(\Omega')\le A_\delta \hskip5mm\text{ for all $i\in\N$}\,.
\end{equation}
For simplicity of notation in the following we write $k$ for $k_i$, and correspondingly $\liminf_{k\to\infty}$ for $\liminf_{i\to\infty}$.
%The quantity $A_\delta$ may depend both on $\delta$ and on the
%original sequence $u_i$, but not on the parameters which will be chosen
%below. 

One important idea in the proof is to define an iterated mollification of the function $v_{k,\delta}$ using a family of length scales ranging from $1$ to $2^{-k}$. We use scales separated by a factor $2^m$,  in order to apply Proposition~\ref{propconstruction} between each pair of consecutive scales. The key idea is that each mollification step eliminates the structure present in the function on a certain length scale, as measured by the $BV$ norm. Since we have a $BV$ bound on the original function, and a large number of mollification steps, most of them will result in a very small reduction of the $BV$ norm, which means that on many scales the function will have an essentially one-dimensional structure. To make this precise, we fix
 $m\ge 3$ and define for $h\in\N$ the 
 sets
$\Omega_h:=\{x\in \R^2: {B_{2^{2-h}}}(x)\subseteq\Omega'\}$, so that
\begin{equation}\label{eqdistomegahhm}
\dist(\Omega_h, \partial\Omega_{h+m})\ge \dist(\Omega_h, \partial\Omega')-\dist(\Omega_{h+m},\partial\Omega')\ge {2^{1-h}} .
\end{equation}
 We then define for $h\in \N$ the function
 $z_{h}\in L^1(\Omega_h;\R^N)$ (implicitly depending also on $k$, $\delta$, and $m$)  by
\begin{equation*}
z_{h} := 
\begin{cases}
v_{k,\delta}, & \text { if } h\ge k\,, \\
z_{h+m}\ast \varphi_{h},& \text{ otherwise,}
\end{cases}
\end{equation*}
where $\varphi_h$ is the mollifier that enters  Proposition~\ref{propconstruction}.

One key estimate, which is obtained by summing the $m$ telescoping series and using (\ref{eqdefAdelta}), is 
\begin{equation}\label{eqdzsumadelta}
 \begin{split}
\sum_{h=0}^k \left[ |Dz_{h+m}|(\Omega_{h+m}) - |Dz_{h}|(\Omega_h)\right] 
&= \sum_{h=k+1}^{k+m}|Dz_{h}|(\Omega_h) - 
\sum_{h=0}^{m-1}|Dz_{h}|(\Omega_h)\\
& \le  m |Dv_{k,\delta}|(\Omega')\le   k  m A_\delta\,.  
 \end{split}
\end{equation}
By the properties of the mollification we also obtain, {for $h< k$,}
\begin{alignat*}{1}
\|z_h-z_{h+m}\|_{L^1(\Omega_h)}=\|z_{h+m}*\varphi_h-z_{h+m}\|_{L^1(\Omega_h)}&\le  2^{-h} |Dz_{h+m}|(\Omega_{h+m}) \\
&\le 
2^{-h} |Dv_{k,\delta}|(\Omega')\le  k 2^{-h} A_\delta \,, 
\end{alignat*}
and therefore
\begin{equation}\label{eql1zhvkd}
\|z_{h} -v_{k,\delta}\|_{L^1(\Omega_h)} \le 
\sum_{j=0}^\infty 
\|z_{h+jm} -z_{h+(j+1)m}\|_{L^1(\Omega_h)} \le 
2 k 2^{-h} A_\delta\,. 
\end{equation}
Since $v_{k,\delta}\in\Z^N$ a.e., this implies
\begin{equation}\label{eqdistuhZN}
\|\dist(z_{h}, \Z^N)\|_{L^1(\Omega_h)}\le 
2 k 2^{-h} A_\delta \,. 
\end{equation}
Convexity and translation invariance of
the nonlocal energy imply that mollification decreases the energy, and using (\ref{eqdistomegahhm}) we have
\begin{equation*}
E^*_{s}[u*\varphi_{h},\Omega_h]\leq E^*_{s}[u,\Omega_{h+m}] \text{ for any }s\in\N,\,\text{ and } u\in L^2(\Omega_{h+m};\R^N)
\end{equation*}
and therefore,  iterating this inequality,
\begin{equation*}
 E^*_{s}[z_{h},\Omega_h]\leq E^*_{s}[v_{k,\delta},\Omega']
\end{equation*}
for any $s$ and $h$. In particular,
\begin{equation}\label{eqehtehtvj}
E^*_{h+t}[z_{h+m},\Omega_{h+m}]\leq E^*_{h+t}[v_{k,\delta},\Omega']. 
\end{equation}

At this point we choose $\zeta\in(0,1/4)$ and $t\in\N$, with $m\ge t\ge 3$.
Since we shall take the limit $k\to\infty$ first, we can assume that $k\ge m/\zeta$. %and suppose that $\zeta k\ge  m$ (we shall focus on large $k$). 
%Using (\ref{eqsumDu}) 
We now choose a good value for $h\in (\zeta
k,k-\zeta k)\cap \N$. Specifically, let
\begin{equation*}
 J:=\{h\in (\zeta
k,k-\zeta k)\cap \N: E^*_{h+t}[v_{k,\delta},\Omega']>(1+5\zeta) \frac{1}{k} \sum_{j=0}^k
E^*_j[v_{k,\delta},\Omega']\}
\end{equation*}
and
\begin{equation*}
 H:=\{h\in (\zeta
k,k-\zeta k)\cap \N: |Dz_{h+m}|(\Omega_{h+m})-
|Dz_{h}|(\Omega_h) 
> \frac{m}{\zeta} A_\delta\}.
\end{equation*}
One easily verifies that
\begin{equation*}
 \# J \frac{1+5\zeta}{k}  \le 1
\end{equation*}
and, recalling (\ref{eqdzsumadelta}),
\begin{equation*}
 \# H \frac{1}{\zeta} \le k.
\end{equation*}
We assume $\zeta\le 1/20$, which implies $1/(1+5\zeta)\le 1-4\zeta$, and obtain, since $\zeta k\ge m\ge 3$,
\begin{equation*}
 \#J+\#H \le \frac{k}{1+5\zeta} +k\zeta \le (1-2\zeta)k -3.
\end{equation*}
Since $\#( (\zeta
k,k-\zeta k)\cap \N)\ge (1-2\zeta)k-2$, 
this implies that we can choose $h\in  (\zeta
k,k-\zeta k)\cap \N\setminus (J\cup H)$. This value will be fixed for the rest of the argument (depending on the other parameters) and satisfies, 
recalling (\ref{eqehtehtvj}),
\begin{equation}\label{eqhgoodp}
E^*_{h+t}[z_{h+m},\Omega_{h+m}]\le E^*_{h+t}[v_{k,\delta},\Omega'] \le (1+5\zeta) \frac{1}{k} \sum_{j=0}^k
E^*_j[v_{k,\delta},\Omega']
\end{equation}
and
\begin{equation}\label{eqhgoodpdue}
|Dz_{h+m}|(\Omega_{h+m})-
|Dz_{h}|(\Omega_h) 
\le  \frac{m}{\zeta} A_\delta\,.
\end{equation}

We apply  Proposition~\ref{propconstruction} to
$z_{h+m}$, for some $M>1$ chosen below, on the sets $\Omega_h\subset\subset\Omega_{h+m}$  and denote the
result by $w=w_{k,\delta,m,t,M}$.  We obtain
\begin{alignat}{1}
(\ln 2) \int_{J_w\cap \Omega_h} \psi_\rel([w],\nu) \, d\calH^1 \le& 
(1+\zeta+c 2^{-t}) E^*_{h+t}[z_{h+m}, \Omega_{h+m}] \nonumber\\
&+\frac{C_M}{\zeta} 2^{h+t} \|\dist(z_{h+m},\Z^N)\|_{L^1(\Omega_{h+m})}
\nonumber
\\
&+\frac{C_M}{\zeta} 2^{t} A_*^{5/6} 
\left(|Dz_{h+m}|(\Omega_{h+m}) - |Dz_{h}|(\Omega_h)\right)^{1/6}
\nonumber
\\ 
&+ \frac{c}{M^{1/2}} 2^{t/2} A_*
\label{eqprop33hm}
\end{alignat}
and
\begin{alignat}{1}
\|z_{h+m}-w\|_{L^1(\Omega_h)}\le& \frac{c}{M^{1/2}} 2^{-h+t/2} A_*
+ C_M \|\dist(z_{h+m},\Z^N)\|_{L^1(\Omega_{h+m})}
\nonumber
\\
&
+ C_M 2^{-h} A_*^{2/3} 
\left(|Dz_{h+m}|(\Omega_{h+m}) - |Dz_h|(\Omega_h)\right)^{1/3}\,,
\label{eqprop33l1}
\end{alignat}
where $A_*:=\max\{|Dz_{h+m}|(\Omega_{h+m}) , E^*_{h+t}[z_{h+m},\Omega_{h+m}]\}$.
Recalling (\ref{eqhgoodp}),
$|Dz_{h+m}|(\Omega_{h+m})\le |Dv_{k,\delta}|(\Omega')$ and then 
(\ref{eqdefAdelta}), we obtain
\begin{equation}\label{eqAstAdelta}
 A_*\le  |Dv_{k,\delta}|(\Omega') + \frac2k\sum_{j=0}^k
E^*_j[v_{k,\delta},\Omega']\le 2kA_\delta.
\end{equation}
Then (\ref{eqprop33l1}) becomes, using (\ref{eqdistuhZN}) and (\ref{eqhgoodpdue}),
\begin{alignat}{1}
\|z_{h+m}-w\|_{L^1(\Omega_{h})}\le& \frac{c}{M^{1/2}} 2^{-h+t/2} k A_\delta
+ C_M  k 2^{-h-m} A_\delta 
\nonumber
\\ &
+ C_M 2^{-h} k^{2/3}A_\delta m^{1/3} \zeta^{-1/3}\,.
\label{eqprop33l1due}
\end{alignat}
We recall that $\hat\omega\subset\Omega_h$, for sufficiently large $k$, since we chose $h\ge k\zeta$.
From
(\ref{eqprop33hm}), (\ref{eqhgoodp}), (\ref{eqdistuhZN}), and (\ref{eqhgoodpdue}),
\begin{equation}\label{eqprop33hm2}
 \begin{split}
\frac{\ln 2}{k} \int_{J_w\cap \hat\omega} \psi_\rel([w],\nu) \, d\calH^1 \le& 
(1+\zeta+c 2^{-t}) 
 (1+5\zeta) \frac{1}{k^2} \sum_{j=0}^k
E^*_j[v_{k,\delta},\Omega']
\\
&+\frac{C_M}{\zeta} 2^{h+t} 2^{-h-m} A_\delta
\\
&+\frac{C_M}{k\zeta} 2^{t} A_*^{5/6} 
\left(m\zeta^{-1} A_\delta\right)^{1/6}
\\ 
&+ \frac{c}{kM^{1/2}} 2^{t/2} A_*
   \end{split}
\end{equation}
We notice that this expression does not depend any more explicitly on the choice of $h$, since $ 2^{h+t} 2^{-h-m}= 2^{t-m}$.

We set $u^k:=\frac1{k\ln 2}w$, where $w=w_{k,\delta,m,t,M}$
is the function constructed in Proposition~\ref{propconstruction}, 
so that the relaxed line-tension functional $\ELTrel_\sigma$ defined in (\ref{eqdefeltrel}) reads as
\begin{equation*}
 \ELTrel_{1/(k\ln 2)}[u^k,\hat\omega] = \frac1{k\ln 2}\int_{J_{w}\cap \hat\omega} \psi_\rel([w],\nu) \, d\calH^1 .
\end{equation*}
Equation (\ref{eqprop33hm2}), together with (\ref{eqdefAdelta}) %(\ref{eqhgoodp}), (\ref{eqdistuhZN}), 
and (\ref{eqAstAdelta}) %, and (\ref{eqhgoodpdue})
then yields, for sufficiently large $k$,  
 \begin{alignat}{1}
\ELTrel_{1/(k\ln 2)}[u^k,\hat\omega]
 &  \le \frac{1}{k^2 (\ln 2)^2} \sum_{j=0}^k
E^*_{j}[v_{k,\delta},\Omega'] + 
(c\zeta+c 2^{-t}) A_\delta
+\frac{C_M}\zeta 2^{t-m} A_\delta
\nonumber
\\
&+\frac{C_M}\zeta 2^{t} A_\delta^{5/6} 
\left(\frac{m}{k \zeta} A_\delta\right)^{1/6}+ \frac{c}{M^{1/2}} 2^{t/2} A_\delta.
\label{ELTrelln2k}
\end{alignat}
Correspondingly, from (\ref{eqprop33l1due}) a similar procedure leads to
\begin{alignat}{1}
\|\frac1{k\ln 2} z_{h+m}-u^k\|_{L^1(\hat\omega)}\le& \frac{c}{M^{1/2}} 2^{-h+t/2} A_\delta
+ C_M   2^{-h-m} A_\delta 
\nonumber
\\ &
+ C_M 2^{-h} k^{-1/3}A_\delta m^{1/3} \zeta^{-1/3}\,.
\label{eql1ln2lk}
\end{alignat}
By (\ref{ELTrelln2k}) and (\ref{eqdefAdelta}) we obtain that $\limsup_{k\to\infty} \ELTrel_{1/(k\ln 2)}[u^k,\hat\omega]
<\infty$. 
Recalling  {$\ELTrel_\sigma\le \ELT_\sigma$ and} the compactness statement in Theorem \ref{theo:homog}, there are $d_k\in\R^N$ such that, after extracting a subsequence, $u^k-d_k$ 
converges as $k\to\infty$ to some
 $u^{\delta,m,t,M}$ in $L^1(\hat\omega;\R^N)$. 
Taking the limit $k\to\infty$, and recalling Theorem 
\ref{theo:homog} and (\ref{eqliminfkliminfi}) we obtain
%and that $k_i\ge (1-\delta) \ln (1/\eps)/\ln 2$, we obtain
 \begin{alignat}{1}
\ELT_0[u^{\delta,m,t,M},\hat\omega]
   \le& \frac{1}{(1-\delta)^2} \liminf_{i\to\infty}
\frac{E_{\e_i}[u_i,\Omega]}{(\ln(1/\eps_i))^2}  + 
(c\zeta+c 2^{-t}) A_\delta
\nonumber\\
&+\frac{C_M}\zeta 2^{t-m} A_\delta+ \frac{c}{M^{1/2}} 2^{t/2} A_\delta.
\label{eqfselfudmtM}
\end{alignat}
At the same time by (\ref{eql1ln2lk}) we have
\begin{alignat*}{1}
\limsup_{k\to\infty} \|\frac1{k\ln 2} z_{h+m}-u^k\|_{L^1(\hat\omega)}\le& \frac{c}{M^{1/2}} 2^{-h+t/2} A_\delta
+ C_M   2^{-h-m} A_\delta \,.
\end{alignat*}
By (\ref{eql1zhvkd}) we have
\begin{equation*}
\|\frac1{k\ln 2}z_{h+m} -\frac1{k\ln 2} v_{k,\delta}\|_{L^1(\hat\omega)} \le 
C  2^{-h-m} A_\delta\,,
\end{equation*}
and therefore
\begin{equation*}
\limsup_{k\to\infty}\|u^k -\frac1{k\ln 2} v_{k,\delta}\|_{L^1(\hat\omega)} \le \frac{c}{M^{1/2}} 2^{-h+t/2} A_\delta
+ C_M   2^{-h-m} A_\delta \,.
\end{equation*}
With (\ref{eqvkideltauil1}), and going back to the notation where the index $i$ is explicit, we obtain
\begin{equation*}
\limsup_{i\to\infty}\|u^{k_i} -\frac1{k_i\ln 2} u_i\|_{L^1(\hat\omega)} \le \frac{c}{M^{1/2}} 2^{-h+t/2} A_\delta
+ C_M   2^{-h-m} A_\delta \,,
\end{equation*}
so that (\ref{eqepsiki}) gives 
\begin{equation*}
\begin{split}
 \limsup_{i\to\infty}\|u^{k_i} -\frac{1}{\ln (1/\eps_i)} 
u_i\|_{L^1(\hat\omega)} 
\le& \frac{c}{M^{1/2}} 2^{-h+t/2} A_\delta
\\
&+ C_M   2^{-h-m} A_\delta +c\delta \limsup_{i\to\infty}\|\frac{1}{\ln 
(1/\eps_i)} u_i\|_{L^1(\hat\omega)}\,,
\end{split}
\end{equation*}
and since $u_i/\ln(1/\eps_i)\to u$ and $u^{k_i}\to u^{\delta,m,t,M} $ in $L^2(\hat\omega)$,
%\begin{equation*}
%\limsup_{i\to\infty}\|u^{k_i} -u_0\|_{L^1(\hat\omega)} \le \frac{c}{M^{1/2}} 2^{-h+t/2} A_\delta
%+ C_M   2^{-h-m} A_\delta
%+\delta\|u_0\|_{L^1}\,,
%\end{equation*}
%and correspondingly
\begin{equation*}
\|u^{\delta,m,t,M} -u\|_{L^1(\hat\omega)} \le \frac{c}{M^{1/2}} 2^{-h+t/2} A_\delta
+ C_M   2^{-h-m} A_\delta
+c\delta\|u\|_{L^1(\hat\omega)}\,.
\end{equation*}
The argument is then concluded recalling (\ref{eqfselfudmtM}) and taking a suitable diagonal subsequence. Indeed, 
as $\delta$, $m$, $\zeta$, $M$ and $t$ were arbitrary, and
since 
$u^{\delta,m,t,M}\to u$, by lower semicontinuity
of $F_\self$ taking first $m\to\infty$, then $\zeta\to0$, then $M\to\infty$, then $t\to\infty$, and finally $\delta\to0$, we conclude
 \begin{alignat*}{1}
F_\self[u_0,\hat\omega]=\ELT_0[u,\hat\omega]
   \le&  \liminf_{i\to\infty}
\frac{E_{\e_i}[u_i,\Omega]}{(\ln(1/\eps_i))^2}  .
\end{alignat*}
This concludes the proof of (\ref{eqeinftyeepsi}) and therefore of the Proposition.
\end{proof}

%\appendix
%\section{Periodic boundary conditions and 3D elasticity}
%\label{apptorus}
% we compare elsewhere to the JMPS variant.

\section*{Acknowledgements}
This work was partially funded by the Deutsche Forschungsgemeinschaft (DFG, German Research Foundation) through project 211504053 -- SFB 1060
and project 390685813 -- GZ 2047/1.

\bibliographystyle{alpha-noname}
\bibliography{Co-Gar-Mu-20}

\begin{thebibliography}{ACGO18}

\bibitem[AB90a]{AmbrosioBraides1990a}
L.~Ambrosio and A.~Braides.
\newblock Functionals defined on partitions in sets of finite perimeter. {I}.\
  {I}ntegral representation and {$\Gamma$}-convergence.
\newblock {\em J. Math. Pures Appl. (9)}, 69:285--305, 1990.

\bibitem[AB90b]{AmbrosioBraides1990b}
L.~Ambrosio and A.~Braides.
\newblock Functionals defined on partitions in sets of finite perimeter. {II}.\
  {S}emicontinuity, relaxation and homogenization.
\newblock {\em J. Math. Pures Appl. (9)}, 69:307--333, 1990.

\bibitem[ABS94]{AlbertiBouchitteSeppecher1994}
G.~Alberti, G.~Bouchitt\'e, and P.~Seppecher.
\newblock Un r\'esultat de perturbations singuli\`eres avec la norme
  {$H^{1/2}$}.
\newblock {\em C. R. Acad. Sci. Paris S\'er. I Math.}, 319:333--338, 1994.

\bibitem[ABS98]{AlbertiBouchitteSeppecher1998}
G.~Alberti, G.~Bouchitt\'e, and P.~Seppecher.
\newblock Phase transition with the line-tension effect.
\newblock {\em Arch. Rational Mech. Anal.}, 144:1--46, 1998.

\bibitem[ACGO18]{ArizaContiGarroniOrtiz2018}
P.~Ariza, S.~Conti, A.~Garroni, and M.~Ortiz.
\newblock Variational modeling of dislocations in crystals in the line-tension
  limit.
\newblock In V.~Mehrmann and M.~Skutella, editors, {\em European Congress of
  Mathematics, Berlin, 2016}, pages 583--598. EMS, 2018.

\bibitem[Ach10]{acharya2010new}
A.~Acharya.
\newblock New inroads in an old subject: plasticity, from around the atomic to
  the macroscopic scale.
\newblock {\em J. Mech. Phys. Solids}, 58:766--778, 2010.

\bibitem[ACP11]{AlicandroCicalesePonsiglione2011}
R.~Alicandro, M.~Cicalese, and M.~Ponsiglione.
\newblock Variational equivalence between {G}inzburg-{L}andau, {$XY$} spin
  systems and screw dislocations energies.
\newblock {\em Indiana Univ. Math. J.}, 60:171--208, 2011.

\bibitem[AFP00]{AmbrosioFP}
L.~Ambrosio, N.~Fusco, and D.~Pallara.
\newblock {\em Functions of Bounded Variation and Free Discontinuity Problems}.
\newblock Mathematical Monographs. Oxford University Press, 2000.

\bibitem[BCG17]{BraidesContiGarroni2017}
A.~Braides, S.~Conti, and A.~Garroni.
\newblock Density of polyhedral partitions.
\newblock {\em Calc. Var. Part. Diff. Eq.}, 56:28, 2017.

\bibitem[Ber16]{berdichevsky2016energy}
V.~Berdichevsky.
\newblock Energy of dislocation networks.
\newblock {\em International Journal of Engineering Science}, 103:35--44, 2016.

\bibitem[BJOS12]{BaldoJerrardOrlandiSoner2012}
S.~Baldo, R.~L. Jerrard, G.~Orlandi, and H.~M. Soner.
\newblock Convergence of {G}inzburg-{L}andau functionals in three-dimensional
  superconductivity.
\newblock {\em Arch. Ration. Mech. Anal.}, 205:699--752, 2012.

\bibitem[BJOS13]{BaldoJerrardOrlandiSoner2013}
S.~Baldo, R.~L. Jerrard, G.~Orlandi, and H.~M. Soner.
\newblock Vortex density models for superconductivity and superfluidity.
\newblock {\em Comm. Math. Phys.}, 318:131--171, 2013.

\bibitem[CG09]{CacaceGarroni2009}
S.~Cacace and A.~Garroni.
\newblock A multi-phase transition model for the dislocations with interfacial
  microstructure.
\newblock {\em Interfaces Free Bound.}, 11:291--316, 2009.

\bibitem[CG15]{ContiGladbach2015}
S.~Conti and P.~Gladbach.
\newblock A line-tension model of dislocation networks on several slip planes.
\newblock {\em Mechanics of Materials}, 90:140--147, 2015.

\bibitem[CGM11]{ContiGarroniMueller2011}
S.~Conti, A.~Garroni, and S.~M\"uller.
\newblock Singular kernels, multiscale decomposition of microstructure, and
  dislocation models.
\newblock {\em Arch. Rat. Mech. Anal.}, 199:779--819, 2011.

\bibitem[CGM15]{ContiGarroniMassaccesi2015}
S.~Conti, A.~Garroni, and A.~Massaccesi.
\newblock Modeling of dislocations and relaxation of functionals on 1-currents
  with discrete multiplicity.
\newblock {\em Calc. Var. PDE}, 54:1847--1874, 2015.

\bibitem[CGM16]{ContiGarroniMueller2016}
S.~Conti, A.~Garroni, and S.~M{\"u}ller.
\newblock Dislocation microstructures and strain-gradient plasticity with one
  active slip plane.
\newblock {\em J. Mech. Phys. Solids}, 93:240--251, 2016.

\bibitem[CGM17]{ContiGarroniMueller2017-BUMI}
S.~Conti, A.~Garroni, and S.~M{\"u}ller.
\newblock Homogenization of vector-valued partition problems and dislocation
  cell structures in the plane.
\newblock {\em Boll. Unione Mat. Ital.}, 10:3--17, 2017.

\bibitem[CGO15]{ContiGarroniOrtiz2015}
S.~Conti, A.~Garroni, and M.~Ortiz.
\newblock The line-tension approximation as the dilute limit of linear-elastic
  dislocations.
\newblock {\em Arch. Ration. Mech. Anal.}, 218:699--755, 2015.

\bibitem[{Dal}93]{Dalmaso1993}
G.~{Dal Maso}.
\newblock {\em An introduction to {$\Gamma$}-convergence}.
\newblock Progress in Nonlinear Differential Equations and their Applications,
  8. Birkh\"auser Boston Inc., Boston, MA, 1993.

\bibitem[DGP12]{DelucaGarroniPonsiglione2012}
L.~{De Luca}, A.~Garroni, and M.~Ponsiglione.
\newblock {$\Gamma$}-convergence analysis of systems of edge dislocations: the
  self energy regime.
\newblock {\em Arch. Ration. Mech. Anal.}, 206:885--910, 2012.

\bibitem[DNPV12]{ValdinociFractional}
E.~Di~Nezza, G.~Palatucci, and E.~Valdinoci.
\newblock Hitchhiker's guide to the fractional {S}obolev spaces.
\newblock {\em Bull. Sci. Math.}, 136:521--573, 2012.

\bibitem[Fed69]{Federer}
H.~Federer.
\newblock {\em Geometric measure theory}.
\newblock Die Grundlehren der mathematischen Wissenschaften, Band 153.
  Springer-Verlag New York Inc., New York, 1969.

\bibitem[FG07]{FocardiGarroni2007}
M.~Focardi and A.~Garroni.
\newblock A 1{D} macroscopic phase field model for dislocations and a second
  order {$\Gamma$}-limit.
\newblock {\em Multiscale Model. Simul.}, 6:1098--1124, 2007.

\bibitem[FH93]{FleckHutchinson1993}
N.~A. Fleck and J.~W. Hutchinson.
\newblock A phenomenological theory for strain gradient effects in plasticity.
\newblock {\em J. Mech. Phys. Solids}, 41:1825--1857, 1993.

\bibitem[GA05]{Gurtin2005}
M.~E. Gurtin and L.~Anand.
\newblock A theory of strain-gradient plasticity for isotropic, plastically
  irrotational materials. {I}. {S}mall deformations.
\newblock {\em J. Mech. Phys. Solids}, 53:1624--1649, 2005.

\bibitem[GGI10]{groma2010variational}
I.~Groma, G.~Gy{\"o}rgyi, and P.~D. Isp{\'a}novity.
\newblock Variational approach in dislocation theory.
\newblock {\em Philosophical Magazine}, 90:3679--3695, 2010.

\bibitem[GGK07]{groma2007dynamics}
I.~Groma, G.~Gy{\"o}rgyi, and B.~Kocsis.
\newblock Dynamics of coarse grained dislocation densities from an effective
  free energy.
\newblock {\em Philosophical Magazine}, 87:1185--1199, 2007.

\bibitem[Gin19a]{Ginster1}
J.~Ginster.
\newblock Plasticity as the {$\Gamma$}-limit of a two-dimensional dislocation
  energy: the critical regime without the assumption of well-separateness.
\newblock {\em Arch. Ration. Mech. Anal.}, 233:1253--1288, 2019.

\bibitem[Gin19b]{Ginster2}
J.~Ginster.
\newblock Strain-gradient plasticity as the {$\Gamma$}-limit of a nonlinear
  dislocation energy with mixed growth.
\newblock {\em SIAM J. Math. Anal.}, 51:3424--3464, 2019.

\bibitem[GLP10]{GarroniLeoniPonsiglione2010}
A.~Garroni, G.~Leoni, and M.~Ponsiglione.
\newblock Gradient theory for plasticity via homogenization of discrete
  dislocations.
\newblock {\em J. Eur. Math. Soc. (JEMS)}, 12:1231--1266, 2010.

\bibitem[GM05]{GarroniMueller2005}
A.~Garroni and S.~M{\"u}ller.
\newblock {$\Gamma$}-limit of a phase-field model of dislocations.
\newblock {\em SIAM J. Math. Anal.}, 36:1943--1964, 2005.

\bibitem[GM06]{GarroniMueller2006}
A.~Garroni and S.~M{\"u}ller.
\newblock A variational model for dislocations in the line tension limit.
\newblock {\em Arch. Ration. Mech. Anal.}, 181:535--578, 2006.

\bibitem[HL68]{HirthLothe1968}
J.~P. Hirth and J.~Lothe.
\newblock {\em {Theory of Dislocations}}.
\newblock McGraw-Hill, New York, 1968.

\bibitem[Hoc13]{Hochrainer2013}
T.~Hochrainer.
\newblock Moving dislocations in finite plasticity: a topological approach.
\newblock {\em ZAMM Z. Angew. Math. Mech.}, 93:252--268, 2013.

\bibitem[KCO02]{KoslowskiCuitinoOrtiz2002}
M.~Koslowski, A.~M. Cuiti{\~n}o, and M.~Ortiz.
\newblock A phase-field theory of dislocation dynamics, strain hardening and
  hysteresis in ductile single crystal.
\newblock {\em J. Mech. Phys. Solids}, 50:2597--2635, 2002.

\bibitem[KK16]{KirchheimKristensen2016}
B.~Kirchheim and J.~Kristensen.
\newblock On rank one convex functions that are homogeneous of degree one.
\newblock {\em Arch. Ration. Mech. Anal.}, 221:527--558, 2016.

\bibitem[KO04]{KoslowskiOrtiz2004}
M.~Koslowski and M.~Ortiz.
\newblock A multi-phase field model of planar dislocation networks.
\newblock {\em Model. Simul. Mat. Sci. Eng.}, 12:1087--1097, 2004.

\bibitem[LL16]{LauteriLuckhaus2016}
G.~Lauteri and S.~Luckhaus.
\newblock An energy estimate for dislocation configurations and the emergence
  of {Cosserat}-type structures in metal plasticity.
\newblock {\em Preprint arXiv:1608.06155}, 2016.

\bibitem[MSZ14]{MuellerScardiaZeppieri}
S.~M\"uller, L.~Scardia, and C.~I. Zeppieri.
\newblock Geometric rigidity for incompatible fields and an application to
  strain-gradient plasticity.
\newblock {\em Indiana Univ. Math. J.}, 63:1365--1396, 2014.

\bibitem[MSZ15]{MuellerScardiaZeppieri2015}
S.~M{\"u}ller, L.~Scardia, and C.~I. Zeppieri.
\newblock Gradient theory for geometrically nonlinear plasticity via the
  homogenization of dislocations.
\newblock In S.~Conti and K.~Hackl, editors, {\em Analysis and Computation of
  Microstructure in Finite Plasticity}, pages 175--204. Springer, 2015.

\bibitem[MSZ16]{monavari2016continuum}
M.~Monavari, S.~Sandfeld, and M.~Zaiser.
\newblock Continuum representation of systems of dislocation lines: A general
  method for deriving closed-form evolution equations.
\newblock {\em J. Mech. Phys. Solids}, 95:575--601, 2016.

\bibitem[OR99]{OrtizRepetto1999}
M.~Ortiz and E.~Repetto.
\newblock Nonconvex energy minimization and dislocation structures in ductile
  single crystals.
\newblock {\em J. Mech. Phys. Solids}, 47:397--462, 1999.

\bibitem[Ort99]{Ortiz1999}
M.~Ortiz.
\newblock Plastic yielding as a phase transition.
\newblock {\em J. Appl. Mech.-Trans. ASME}, 66:289--298, 1999.

\bibitem[Pon07]{Ponsiglione2007}
M.~Ponsiglione.
\newblock Elastic energy stored in a crystal induced by screw dislocations:
  from discrete to continuous.
\newblock {\em SIAM J. Math. Anal.}, 39:449--469, 2007.

\bibitem[SS03]{SandierSerfaty2003}
E.~Sandier and S.~Serfaty.
\newblock Limiting vorticities for the {G}inzburg-{L}andau equations.
\newblock {\em Duke Math. J.}, 117:403--446, 2003.

\bibitem[SS07]{SandierSerfaty2007}
E.~Sandier and S.~Serfaty.
\newblock {\em Vortices in the magnetic {G}inzburg-{L}andau model}.
\newblock Progress in Nonlinear Differential Equations and their Applications,
  70. Birkh\"auser Boston, Inc., Boston, MA, 2007.

\bibitem[SV12]{SavinValdinoci2012}
O.~Savin and E.~Valdinoci.
\newblock {$\Gamma$}-convergence for nonlocal phase transitions.
\newblock {\em Ann. Inst. H. Poincar\'{e} Anal. Non Lin\'{e}aire}, 29:479--500,
  2012.

\bibitem[SvG16]{ScalaVangoethem2016}
R.~Scala and N.~van Goethem.
\newblock Currents and dislocations at the continuum scale.
\newblock {\em Methods Appl. Anal.}, 23:1--34, 2016.

\end{thebibliography}

\end{document}